\newtheorem{theorem}{Theorem}[section]
\newtheorem{lemma}[theorem]{Lemma}
\newtheorem{proposition}[theorem]{Proposition}
\newtheorem{corollary}[theorem]{Corollary}
\theoremstyle{definition}
\newtheorem{definition}[theorem]{Definition}
\newtheorem{example}[theorem]{Example}
\newtheorem{question}[theorem]{Question}
\newtheorem{remark}[theorem]{Remark}
\DeclareMathOperator{\ev}{\operatorname{\mathsf{ev}}}
\DeclareMathOperator{\coev}{\operatorname{\mathsf{coev}}}
\newcommand{\Tr}{\operatorname{\mathsf{Tr}}}%{\text{Tr}\,}
\newcommand{\Trace}{\operatorname{\mathsf{Trace}}}
\newcommand{\id}{\operatorname{\mathsf{id}}}
\newcommand{\FPdim}{\operatorname{\mathsf{FPdim}}} 
\newcommand{\Ker}{\operatorname{\mathsf{Ker}}} 
\newcommand{\Gal}{\operatorname{\mathsf{Gal}}}
\newcommand{\Fun}{\operatorname{\mathsf{Fun}}} 
\newcommand{\End}{\operatorname{\mathsf{End}}} 
\renewcommand{\Vec}{\operatorname{\mathsf{Vec}}} 
\newcommand{\sVec}{\operatorname{\mathsf{sVec}}}
\newcommand{\Hom}{\text{Hom}}
\newcommand{\Aut}{\operatorname{\mathsf{Aut}}}
\newcommand{\Rep}{\operatorname{\mathsf{Rep}}}
\newcommand{\rev}{\text{rev}}
\newcommand{\op}{\text{op}}
\newcommand{\eps}{\varepsilon}
\newcommand{\B}{\mathcal{B}}
\newcommand{\C}{\mathcal{C}}
\newcommand{\K}{\mathcal{K}}
\newcommand{\D}{\mathcal{D}}
\newcommand{\E}{\mathcal{E}}
\newcommand{\Z}{\mathcal{Z}}
\renewcommand{\L}{\mathcal{L}}
\newcommand{\M}{\mathcal{M}}
\newcommand{\A}{\mathcal{A}}
\newcommand{\N}{\mathcal{N}}
\renewcommand{\O}{\mathcal{O}}
\newcommand{\be}{\mathbf{1}}
\newcommand{\g}{\mathfrak{g}}
\newcommand{\la}{\langle\,}
\newcommand{\ra}{\,\rangle}
\renewcommand{\be}{\mathbf{1}}
\newcommand{\bt}{\boxtimes}
\newcommand{\ot}{\otimes}
\newcommand{\kk}{\mathbb{k}}
\begin{document}
\title[Morita equivalence methods]{Morita equivalence methods in classification of  fusion categories}

\author{Dmitri Nikshych}
\address{Department of Mathematics and Statistics,
University of New Hampshire,  Durham, NH 03824, USA}
\email{nikshych@math.unh.edu}

\begin{abstract}
We describe an  approach to classification of fusion categories  in terms of Morita
equivalence.  This is usually achieved by analyzing Drinfeld centers of fusion categories
and finding Tannakian subcategories therein.
\end{abstract}
\maketitle

\setcounter{tocdepth}{1}
\tableofcontents
\nopagebreak

%%%%%%%%%%%%%%%%%%%%%%%%%%%%%%%%%%%%%%%%%%%%%%%
%%%%%%%%%%%%%%%%%%%%%%%%%%%%%%%%%%%%%%%%%%%%%%%
%%%%%%%%%%%%%%%%%%%%%%%%%%%%%%%%%%%%%%%%%%%%%%%
\section{Introduction}

The purpose of this survey is to describe an approach to classification of fusion categories
using categorical duality (i.e., categorical Morita equivalence).  This duality is a categorical analogue
of the following classical construction in algebra: given a ring $R$ and a left  $R$-module $M$
one has the ring $R^*_M:= \End_R(M)$ of $R$-linear endomorphisms of $M$. We can view this
ring as the dual ring of $R$ with respect to $M$.  Two rings $R$ and $S$ are called {\em 
Morita equivalent} if $S^\op$ is isomorphic to $R^*_M$ with respect to some progenerator 
module $M$
(here  $S^\op$ denotes the ring with the opposite multiplication).  It is well known
that Morita equivalent rings have equivalent Abelian categories of modules.

In the categorical setting one replaces rings by tensor categories, their modules
by module categories,  and module endomorphisms by module endofunctors,
see Section~\ref{sect module} for definitions.  Thus, given, a tensor category $\A$
and  a left $\A$-module category $\M$ one has a new tensor category $\A^*_\M$
which we call   {\em categorically Morita equivalent} or {\em dual} to $\A$ with respect to $\M$.  
If $\A$ is a fusion category and $\M$ is an indecomposable semisimple $\A$-module category 
then $\A^*_\M$ is also a fusion category. 

One can produce new examples of fusion categories in this way.  Namely, starting with
a known fusion category $\A$ one finds its module categories and construct duals. 
For example, when $\A$ is a pointed (respectively, nilpotent)  category  (see Definitions~\ref{def:pointed}
and \ref{def:nilpotent}) the resulting dual categories  form an important class of fusion categories 
called {\em group-theoretical} 
(respectively,  {\em weakly group-theoretical} )   categories.

In the opposite direction, given a class of fusion categories (e.g., of a given dimension)
one can try  to show that categories in this class are categorically Morita equivalent 
to some well understood categories.  In other words, one tries to classify 
fusion categories up to a categorical Morita equivalence.  
Classification results of this type for fusion categories of small integral dimension
were obtained in \cite{DGNO1, ENO2, Nat}. The techniques
used in these papers are based on recovering some group-theoretical information about the
Morita equivalnce class of a given fusion category. 

In this paper we try to explain the ideas and methods used in the above  classification.
Of particular importance is the structural theory of braided categories. 
This comes from the fact that the braided equivalence class of the 
Drinfeld center of a fusion category is its complete Morita equivalence invariant, 
see Theorem~\ref{2007 criterion}.

This paper does not contain new results or proofs. Its only goal is to collect relevant notions
and facts in one place.  In Sections 2, 3, and 4 we recall definitions and basic facts
about fusion categories, their module categories, and braided categories.  In Section 5
we describe  a connection between the structure of the center of a fusion category $\A$
and the Morita equivalence class of $\A$.  These results are applied  
to classification of fusion categories of low dimension in Section 6. 

The author would like to thank  Nicolas Andruskiewitsch, Juan Cuadra, and Blas Torrecillas
for organizing the conference ``Hopf algebras and tensor categories" in beautiful Almeria
in July 2011 and for inviting him to write this survey. The author is also grateful to the referee
for useful comments. 

The author's work  was partially supported by 
the NSF grant DMS-0800545. 

%%%%%%%%%%%%%%%%%%%%%%%%%%%%%%%%%%%%%%%%%%%%%%%
%%%%%%%%%%%%%%%%%%%%%%%%%%%%%%%%%%%%%%%%%%%%%%%
%%%%%%%%%%%%%%%%%%%%%%%%%%%%%%%%%%%%%%%%%%%%%%%
\section{Definitions and basic notions}

Throughout this paper $\kk$ denotes an algebraically closed field of characteristic zero.
All categories  are assumed to be Abelian, semisimple, $\kk$-linear and  have finitely many 
isomorphism classes of simple objects and finite dimensional spaces of morphisms. 
All functors are assumed to be additive and $\kk$-linear.

%%%%%%%%%%%%%%%%%%%%%%%%%%%%%%%%%%%%%%%%%%%%%%%
\subsection{Definitions, basic properties, and examples of fusion categories}

The following definition was given in \cite{ENO1}.

\begin{definition}
\label{fusion category}
A {\em fusion category} over $\kk$ is a rigid tensor category  such that the unit object $\be$ is simple. 
\end{definition}

That is, a fusion category $\A$ is  a category equipped with  tensor product
bifunctor  $\otimes:  \A \times \A \to \A$, the natural isomorphisms (associativity and unit constraints)
\begin{gather}
a_{X,Y,Z}:(X\ot Y)\ot Z \xrightarrow{\sim} X \ot (Y \ot Z), \\
l_X: \be \ot X \xrightarrow{\sim} X, \quad \text{and}\quad  r_X: X \ot \be \xrightarrow{\sim} X, 
\end{gather}
satisfying the following coherence axioms:
\begin{enumerate}
\item[]{\bf 1.\  The Pentagon Axiom.} The diagram
\begin{equation}
\label{pentagon}
\xymatrix{
&((W\ot X)\ot Y)\ot Z  \ar[dl]_{a_{W\ot X, Y, Z}} \ar[dr]^{a_{W,X,Y}\otimes \id_Z}&\\
(W \ot (X \ot Y))\ot Z
\ar[d]_{a_{W, X\ot Y, Z}}   && (W \ot X) \ot (Y \ot Z) \ar[d]^{a_{W, X, Y\ot Z}} \\
 W \ot ((X\ot Y) \ot Z) 
\ar[rr]^{\id_W \ot a_{X,Y,Z}}  && W \ot (X \ot (Y \ot Z))
} 
\end{equation}
is commutative for all objects $W,\,X,\,Y,\,Z$ in $\A$.

\item[]{\bf 2.\ The triangle axiom.} The diagram
\begin{equation}
\label{triangle}
\xymatrix{(X\otimes \be )\otimes Y\ar[rr]^{a_{X,\be ,Y}} \ar[dr]_{r_X\otimes 
\id_Y}&&X\otimes (\be \otimes Y)
\ar[dl]^{\id_X\otimes l_Y}\\ &X\otimes Y&}
\end{equation}
is commutative for all objects $X,\,Y$ in $\A$. 
\end{enumerate}

The coherence theorem of MacLane states that {\em every} diagram
constructed from the associativity and unit isomorphisms commutes.
We will sometimes omit associativity constraints from formulas.

The rigidity condition means that for every object $X$ of $\A$ there exist left and right duals of $X$.
Here a {\em left dual} of $X$ is an object $X^*$ in $\A$ for which 
there exist morphisms $\ev_X:X^*\ot X\to \be$ and
$\coev_X:\be\to X\ot X^*$, called the {\em evaluation} and
{\em coevaluation} such that the compositions
\begin{gather}
\label{left dual axioms}
X \xrightarrow{\coev_X\ot \id_X} (X\ot X^*)\ot X 
\xrightarrow{a_{X,X^*,X}}
X\ot (X^*\ot X) \xrightarrow{\id_X\ot \ev_X} X, \\
\label{left dual axioms'}
X^* \xrightarrow{\id_{X^*}\ot \coev_X} X^*\ot (X\ot X^*) \xrightarrow{a^{-1}_{X,X^*,X}} 
 (X^*\ot X)\ot X^* \xrightarrow{\ev_X \ot \id_{X^*}} X^* 
\end{gather}
are the identity morphisms. A right dual  ${}^*X$ is defined in a similar way.  Dual objects are unique up
to isomorphism. One has  $({}^*X)^* \cong X \cong {}^*(X^*)$ for all objects $X$ in $\C$.
Also,  there exist a (non-canoical) isomorphism $X^* \cong {}^*X$ for every $X$ (indeed,  for a simple $X$ both  $X^*$ and ${}^*X$
are isomorphic to the unique simple object $Y$ such that $\be$ is contained in $X\ot Y$). 

For a fusion category $\A$ let $\A^\op$ denote the fusion category with the opposite 
tensor product. 

\begin{definition}
Let $\A_1,\, \A_2$ be fusion categories. A {\em tensor functor} between $\A_1$ and $\A_2$
is a functor $F: \A_1\to A_2$ along with  natural isomorphisms 
\[
J_{X,Y}: F(X)\ot F(Y)
\xrightarrow{\sim} F(X\ot Y)\quad \mbox{and}  \quad\varphi: F(\be) \xrightarrow{\sim} \be
\] 
such that the diagrams 
\begin{equation}\label{tenfun1}
\xymatrix{
(F(X)\ot F(Y))\ot F(Z) \ar[rrr]^{a_{F(X),F(Y),F(Z)}}
\ar[d]_{J_{X,Y}\ot \id_{F(Z)}} & &  &
F(X)\ot (F(Y)\ot F(Z)) \ar[d]^{\id_{F(X)}\ot J_{Y,Z}} \\
F(X\ot Y)\ot F(Z)    \ar[d]_{J_{X\ot Y,Z} } & & & F(X)\ot F(Y\ot Z)
\ar[d]^{J_{X,Y\ot Z}} \\
F((X\ot Y)\ot Z) \ar[rrr]^{F(a_{X,Y,Z})} & & & F(X\ot (Y\ot Z)),
}
\end{equation}
\begin{equation}
\xymatrix{
F(\be) \ot  F(X) \ar[rr]^{J_{\be, X}}  \ar[d]_{\varphi\ot \id_{F(X)}} && F(\be \ot X) \ar[d]^{F(l_X)} \\
\be \ot F(X) \ar[rr]_{l_{F(X)}}  && F(X),
}
\end{equation}
and
\begin{equation}
\xymatrix{
F(X) \ot  F(\be) \ar[rr]^{J_{X,\be}}  \ar[d]_{\id_{F(X)} \ot \varphi } && F(X \ot \be) \ar[d]^{F(r_X)} \\
F(X)\ot \be  \ar[rr]_{r_{F(X)}}  && F(X).
}
\end{equation}
commute for all objects $X,\,Y,\, Z$ in $\A_1$. 
\end{definition}

%\begin{remark}
%The actual diagrams for the pentagon and triangle axioms and the compatibility condition
%for tensor functors can be found in many textbooks, e.g., in \cite[Section 1.1]{BK},
%\cite[Section XI.2]{Ka},  and \cite{Mac}. 
%\end{remark}

\begin{definition}
Let $\A$ and $\B$ be fusion categories and let
 $F^1, \, F^2 : \A \to \B$ be tensor functors between fusion categories
with tensor structures 
\[
J^i_{X,Y}: F^i(X)\ot F^i(Y) \xrightarrow{\sim} F^i(X\ot Y),\quad  i=1,2.
\]
A  natural  morphism $\eta$ between $F^1$ and $F^2$ is called {\em tensor} if  its components 
\[
\eta_X: F^1(X) \to F^2(X)
\]
 satisfy  the commutative diagram
\begin{equation}\label{tenfunmor1}
\xymatrix{
F^1(X) \ot F^1(Y)  \ar[rr]^{ \eta_X \ot \eta_Y} \ar[d]_{J^1_{X,Y}} && F^2(X)\ot F^2(Y)
\ar[d]^{J^2_{X,Y}} \\
F^1(X\ot Y) \ar[rr]^{\eta_{X\ot Y}} && F^2(X\ot Y),
}
\end{equation}
for all $X,\,Y\in \A$.
\end{definition}

Tensor autoequivalences of a fusion category $\A$  form a monoidal category denoted $\Aut_\ot(\A)$.

Let $G$ be a finite group and let ${\underline G}$ denote the
monoidal category whose objects are elements of $G$, morphisms are
identities, and the tensor product is given by the multiplication in
$G$. 

\begin{definition}
\label{def:action}
An {\em action} of $G$ on  a fusion category $\A$ is a
monoidal functor 
\begin{equation}
\label{Tg}
T: {\underline G} \to \Aut_\ot(\A): g \mapsto T_g.  
\end{equation}
\end{definition}

This means that for every $g\in G$ there is a tensor autoequivalence $T_g:\A\to \A$ 
and for any pair $g,\, h\in G$, there is a  natural isomorphism of tensor functors 
\[
\gamma_{g,h}: T_g\circ T_h\simeq T_{gh}
\]  
satisfying usual compatibility conditions.

Note that for any fusion category $\A$ the functor
\[
\A \to \A^\op : X \mapsto X^*
\]
is a tensor equivalence. Consequently,  the functor
\[
\A \to \A : X \mapsto X^{**}
\]
is a tensor autoequivalence of $\A$.

\begin{definition}
\label{def:pivotal}
A {\em pivotal structure} on a  fusion category $\A$ is a tensor isomorphism $\psi$ between the identity
autoequivalence of $\A$ and the functor $X \mapsto X^{**}$ of taking the second dual.  A  fusion
category with a pivotal structure is called {\em pivotal}. 
\end{definition}

In a pivotal category there is a notion of a {\em trace}
of an endomorphism. Namely, for   $f\in \End_\A(X)$ set:
\begin{equation}
\label{old friend}
\Tr(f) : \be \xrightarrow{\coev_X} X \ot X^* \xrightarrow{\psi_X\circ f \ot \id_{X^*}} X^{**}\ot X^* \xrightarrow{\ev_{X^*}} \be,  
\end{equation}
so that $\Tr(f)\in \End_\A(\be)=\kk$. The {\em dimension} of $X\in \A$ is defined by 
\begin{equation}
\label{eqn:dim}
d_X =\Tr(\id_X). 
\end{equation}
Note that $d_X\neq 0$ for every simple $X$.
A pivotal structure (respectively, a pivotal category) is called {\em spherical} if $d_X= d_{X^*}$
for all objects $X$. 

\begin{remark}
It is not known whether  every fusion category has a pivotal (or spherical) structure.  
It is true for pseudo-unitary categories, see  Proposition~\ref{canonical spherical structure}.
In particular  it is true for categories of integer Frobenius-Perron dimension 
(Corollary~\ref{canonical spherical structure in integer case}). 
%However, it is known 
%\cite{ENO4} that for  any fusion category there is canonical tensor isomorphism between the 
%identity autoequivalence and the functor $X \mapsto X^{****}$ of taking the  fourth dual. 
\end{remark}

We say that  a tensor functor $F: \A_1\to A_2$  is {\em injective} if it is fully faithful and {\em surjective}
if for any object $Y\in \A_2$ there is an object $X\in \A_1$ such that $Y$ is isomorphic to a direct
summand of $F(X)$. In the latter case we call $\A_2$ a quotient category of $\A$.

By a {\em fusion subcategory} of a fusion category $\A$
we always mean a full tensor subcategory $\tilde{\A} \subset \A$ 
such that if $X\in \tilde{\A}$ is isomorphic to a direct summand of an object of $\tilde{\A}$
then $X\in \tilde{\A}$. As an additive category,  $\tilde{\A}$ is generated by some
of the simple objects of $\A$.  It is known, see \cite[Appendix F]{DGNO2}, that a fusion subcategory
of a fusion category is rigid; therefore, it is itself a fusion category.

Let $\Vec$ denote the fusion category of finite dimensional vector spaces over $\kk$.
A tensor functor $\A \to \Vec$ will be called a {\em fiber functor}.

Any fusion category $\A$ contains a trivial fusion subcategory consisting of multiples
of the unit object $\be$. We will  identify  this subcategory with $\Vec$. 
%A fusion category $\A$
%is called {\em simple} if $\Vec$ is the only proper fusion subcategory of $\A$.

\begin{definition}
\label{def:pointed}
A fusion category is called {\em pointed} if all its simple objects are invertible
with respect to tensor product.
For a fusion category $\A$ we denote  $\A_{pt}$ the maximal pointed fusion subcategory
of $\A$. %We say that $\A$ is {\em unpointed} if $\A_{pt} =\Vec$.
\end{definition}

We will denote $\A \boxtimes \B$ the {\em tensor product} of fusion categories $\A$ and $\B$ 
\cite[Section 5]{De}.  
The category $\A \boxtimes \B$ is  obtained 
as the completion of the $\kk$-linear direct product $\A\otimes_\kk\B$ under direct sums and subobjects.

%%%%%%%%%%%%%%%%%%%%%%%%%%%%%%%%%%%%%%%%%%%%%%%
\subsection{First examples of fusion categories}
\label{sect first examples}

Let $G$ be a finite group.

\begin{example} 
\label{ex VecGw}
The following is the most general example of a pointed fusion category.
Let $\omega$ be a normalized $3$-cocycle on $G$ with values in $\kk^\times$,  the multiplicative
group of the ground field. That is,
$\omega: G\times G\times G\to \kk^\times$ is a function satisfying equations
\begin{equation}
\label{a 3-cocycle}
\omega(g_1g_2, g_3, g_4)\omega(g_1, g_2, g_3 g_4) =
\omega(g_1, g_2, g_3) \omega(g_1, g_2 g_3, g_4) \omega(g_2, g_3, g_4)
\end{equation}
and 
\begin{equation}
\omega(g_1,\, 1,\, g_2) =1,
\end{equation}
for all $g_1, g_2, g_3, g_4\in G$. 

Let $\Vec_G^\omega$ denote the category of $G$-graded $\kk$-vector 
spaces with the tensor product of objects $U=\oplus_{g\in G}\, U_g$ and $V=\oplus_{g\in G}\, V_g$
given by
\[
(U\ot V)_g =\bigoplus_{xy =g}\, U_x\ot V_y,\qquad g \in G,
\] 
with the associativity constraint $a_{U,V,W}: (U\ot V) \ot W \xrightarrow{\sim} U \ot (V \ot W)$
on homogeneous spaces $U,\,V,\, W$ of degrees $g_1,\,g_2,\,g_3\in G$ given by $\omega(g_1,\,g_2,\,g_3)$
times the canonical vector spaces isomorphism $(U\ot V) \ot W \xrightarrow{\sim} U \ot (V \ot W)$.

For any $g\in G$ let $\delta_g$ denote the corresponding simple object of $\Vec_G^\omega$.
We have
\[
\delta_g \ot \delta_h =\delta_{gh},\qquad g,h\in G.
\]

Two categories $\Vec_G^\omega$ and  $\Vec_{\tilde{G}}^{\tilde\omega}$ are equivalent if  and only if 
there is a group isomorphism $f: G \to \tilde{G}$ such that $\omega$ and $\tilde\omega\circ (f\times f \times f)$
are cohomologous $3$-cocycles on $G$. 

When $\omega=1$ we will denote the corresponding pointed fusion category by $\Vec_G$. 
\end{example}

\begin{example}
\label{ex RepG}
Let $\Rep(G)$ be the category of finite dimensional representations of $G$ over $\kk$.  It is a fusion category 
with the usual tensor product and associativity isomorphisms.  The unit object is the trivial
representation.  The left and right dual objects of the representation $V$ are both given by the dual representation $V^*$. 
Simple objects of $\Rep(G)$ are irreducible representations.

The category $\Rep(G)$  is pointed if and only if $G$ is Abelian, in which case there is
a canonical equivalence $\Rep(G) \cong \Vec_{\widehat{G}}$, where $\widehat{G}$ is the 
group of characters of~$G$.
\end{example}

\begin{example}
\label{ex Hopf}
This is a generalization of Example~\ref{ex RepG}.
Let $H$ be a  semisimple Hopf algebra over $\kk$ (such an algebra is automatically finite dimensional)
with the comultiplcation $\Delta: H \to H\ot H$, antipode $S: H \to H$, and counit $\eps : H \to \kk$ \cite{Mo}. 
The category $\Rep(H)$  of finite dimensional $H$-modules is a fusion category with the tensor
product of $H$-modules $V,\, W$ being $V\ot_\kk W$ with the action of $H$ given by
\[
h \cdot (v \ot w) = \Delta(h)(v \ot w),\qquad  v\in V,\, w\in W,\,h\in H.
\]
The unit object is $\kk$ with the action given by $\eps$.  The left dual of an $H$-module
$V$  is the  dual vector space $V^*$ with an $H$-module structure given by 
$\langle h.\phi,\, v \rangle  = \langle \phi,\, S(h).v \rangle$.  

Let $H_1,\, H_2$ be semisimple Hopf algebras.  A   homomorphism $f:~H_1\to H_2$ of Hopf
algebras induces a tensor functor $F: \Rep(H_2) \to \Rep(H_1)$.
The functor $F$ is injective (respectively, surjective) if and only if $f$ is surjective (respectively,
injective).
\end{example}

\begin{remark}
Note that there is forgetful tensor functor $\Rep(H)\to \Vec$, i.e., a {\em fiber functor}.
Conversely, if $\A$ is a fusion category that admits a fiber functor then $\A \cong \Rep(H)$
for some semisimple Hopf algebra $H$ \cite{Ul}. 
\end{remark}

\begin{example}
\label{ex qHopf}
Similarly, if $Q$ is a semisimple quasi-Hopf algebra then $\Rep(Q)$ is a fusion category.
In this case the assocativity constraint is determined by the associatior $\Phi\in Q\ot Q \ot Q$.
The category $\Vec_G^\omega$ from Example~\ref{ex VecGw} is a special case of such category.
\end{example}

\begin{example}
Let $\g$ be a finite dimensional simple Lie algebra and let $\hat \g$ be the
corresponding affine Lie algebra. For any $k\in \mathbb{Z}_{>0}$ let $\C(\g,k)$  the
category of highest weight integrable $\hat \g-$modules of level $k$ is a fusion category, 
see e.g. \cite[Section~ 7.1]{BK}  where this category is denoted $\O_k^{int}$. 
\end{example}

%%%%%%%%%%%%%%%%%%%%%%%%%%%%%%%%%%%%%%%%%%%%%%%
\subsection{Group theoretical constructions: extensions and equivariantizations}
\label{GT construction sect}

For a fusion category $\A$ let $\O(\A)$ denote the set of (representatives of isomorphism classes of)
simple objects of $\A$.

\begin{definition}
\label{def:grading}
A {\em grading} of $\A$ by a group $G$ is a map $\deg: \O(\A) \to G$ with the following property:
for all simple objects $X,\,Y,\,Z\in \A$ such that $X\ot Y$ contains $Z$ one has $\deg Z = \deg X \cdot \deg Y$.
\end{definition}

The name ``grading" is also used for the corresponding decomposition 
\begin{equation}
\label{G-graded category}
\A = \bigoplus_{g\in G}\, \A_g,
\end{equation}
where $\A_g$ is the  full additive subcategory generated by simple  objects of degree $g\in G$. 
We say that a grading is {\em faithful}   if $\A_g\neq 0$ for all $g\in G$.
Note that the {\em trivial component} $\A_e$  of  the grading \eqref{G-graded category}
is a fusion subcategory of $\A$. 

\begin{definition}
\label{def:adjoint}
Let $\A$ be a fusion category. The {\em adjoint} subcategory $\A_{ad} \subset \A$
is the fusion subcategory of $\A$ generated by  objects $X\ot X^*,\, X\in \O(\A)$.
\end{definition}

It was explained in \cite{GN} that there exists a group $U(\A)$, called the {\em universal grading group} of $\A$,
and a faithful grading 
\[
\A =\bigoplus _{g\in U(\A)}\, \A_g \qquad \mbox{with } \A_e=\A_{ad}.
\]
This grading is universal in the sense that any  faithful 
grading \eqref{G-graded category} of $\A$ is obtained by taking a quotient of the group $U(\A)$.

\begin{example}
Let $\A = \Rep(H)$, where $H$ is a semisimple Hopf algebra. 
Let $K$ be the maximal Hopf subalgebra of $H$ contained in the center of $H$. 
Then $K$ is isomorphic to the Hopf algebra of functions on $U(\Rep(H))$. In other words,
the universal grading group of 
$\Rep(H)$ is the spectrum of $K$ \cite{GN}.
\end{example}

\begin{definition}
\label{def:extension}
Let $G$ be a finite group. We say that a fusion category $\A$
is a {\em $G$-extension} of a fusion category $\B$ if there is a faithful $G$-grading of $\A$ such that 
$\A_e \cong \B$.  
\end{definition}

\begin{remark}
A classification of $G$-extensions of fusion categories is obtained in \cite{ENO3}.  Namely, $G$-extensions
of a fusion category $\A$  correspond to homomorphisms $G \to \text{BrPic}(\A)$, where $\text{BrPic}(\A)$
is the {\em Brauer-Picard} group of $\A$ consisting of invertible $\A$-module categories, and certain cohomological
data. See \cite{ENO3} for details.  
This theory extends the clasical theory of strongly graded rings (also known as generalized crossed products) 
and their description using bimodules and crossed systems.
\end{remark}

\begin{definition}
\label{def:nilpotent}
A fusion category $\A$ is called {\em nilpotent}  \cite{GN} if there is a sequence of finite groups
$G_1,\dots, G_n$ and a sequence of fusion subcategories of $\A$:
\[
\A_0 =\Vec \subset \A_1\subset \cdots \subset \A_n =\A,
\]
such that $\A_{i}$ is a $G_i$-extension of  $\A_{i-1},\, i=1,\dots,n$.  The smallest such $n$
is called the {\em nilpotency class} of  $\A$.  A nilpotent fusion category is called {\em cyclically nilpotent}
if all groups $G_i$ are cyclic.
\end{definition} 

\begin{remark}
The category $\Vec_G^\omega$ is nilpotent for any finite group $G$ (since pointed fusion categories are precisely 
nilpotent fusion categories of class $1$). The category $\Rep(G)$ is nilpotent if and only if $G$ is nilpotent.
\end{remark}

Let $G$ be a group acting on a fusion category $\A$, see Definition~\ref{def:action}. 

\begin{definition}
\label{Gequiv object} A {\em $G$-equivariant object\,} in $\A$ is a
pair $(X,\{u_g\}_{g\in G})$ consisting of an object $X$ of $\A$
together with a collection of isomorphisms
 $u_g: T_g(X)\simeq X,\, g\in G$, such that the diagram
\begin{equation*}
\label{equivariantX}
\xymatrix{T_g(T_h(X))\ar[rr]^{T_g(u_h)} \ar[d]_{\gamma_{g,h}(X)
}&&T_g(X)\ar[d]^{u_g}\\ T_{gh}(X)\ar[rr]^{u_{gh}}&&X}
\end{equation*}
commutes for all $g,h\in G$.
One defines morphisms of equivariant objects to be morphisms in $\A$ commuting with $u_g,\; g\in G$.
\end{definition}

Equivariant objects in $\A$ form a fusion category, called the  {\em equivariantization}
of $\A$ and denoted by $\A^G$.  There is a natural forgetful tensor functor
$\A^G \to \A$.

\begin{example}
\label{equiv examples}
Let $G$ be a finite group.
\begin{enumerate}
\item[(i)] Consider $\Vec$ with the trivial action of $G$. Then $\Vec^G \cong \Rep(G)$.
\item[(ii)] More generally, let $N$ be a normal subgroup of $G$.  The corresponding action 
of $G/N$ on $N$ induces an action of $G/N$ on $\Rep(N)$. We have $\Rep(N)^{G/N} \cong \Rep(G)$.
\item[(iii)] Consider $\Vec_G$ with the action of $G$ by conjugation. Then $(\Vec_G)^G \cong \Z(\Vec_G)$
is the center of $\Vec_G$, cf.\  Example~\ref{ZVecG}. 
\end{enumerate}
\end{example}

%%%%%%%%%%%%%%%%%%%%%%%%%%%%%%%%%%%%%%%%%%%%%%%
\subsection{The Grothendieck ring and Frobenius-Perron dimensions}

As before, let $\O(\A)$ denote the set of
isomorphism classes of simple objects in a fusion category~$\A$.

For any object $X$ of $\A$  and any $Y\in \O(\A)$ let $[X:Y]$ denote the multiplicty of $Y$ in $X$.

The {\em Grothendieck ring} $K(\A)$ of a fusion category $\A$ is
generated by isomorphism classes of objects  $X\in \A$ with the addition and multiplication
given by 
\[
X+Y = X\oplus Y \quad \mbox{and} \quad XY = X\otimes Y
\] 
for all $X,\,Y\in \A$.  Clearly, $K(\A)$ is a free $\mathbb{Z}$-module with basis $\O(\A)$.

\begin{example}
For the pointed fusion category $\Vec_G^\omega$ from Example~\ref{ex VecGw}  we have
$K(\Vec_G^\omega) =  \mathbb{Z}G$. 
\end{example}

There exists a unique ring
homomorphism $\FPdim: K(\A)\to \mathbb{R}$, called the {\em Frobenius-Perron dimension}  
such that $\FPdim(X)>0$ for any $0\ne X\in \A$, see
\cite[Section 8.1]{ENO1}. The number $\FPdim(X)$ is  the largest positive eigenvalue
(i.e., the Frobenius-Perron eigenvalue) of the integer non-negative matrix 
\[
N^X = (N^Z_{XY})_{Y,Z\in \O(\A)},
\]
where
\[
X \ot Y \cong \bigoplus_{Z\in \O(\A)}\, N^Z_{XY} \, Z.
\]
For every object $X\in \A$ we have
\[
\FPdim(X) =\FPdim(X^*).
\]

For a fusion category $\A$ one defines (see \cite[Section  8.2]{ENO1}) its
{\em Frobenius-Perron dimension}:
\begin{equation}
\label{FPdim def}
\FPdim(\A)=\sum_{X\in \O(\A)}\,\FPdim(X)^2.
\end{equation}

We define the {\em virtual regular object} of $\A$ as 
\[
R_\A := \sum_{X\in \O(\A)}\, \FPdim(X) X \in K(\A)\ot_\mathbb{Z}\mathbb{R}.
\]
%We have $\FPdim(\A) = \FPdim(R_A)$, where we extend $\FPdim$ from $K(\A)$
%to $K(\A)\ot_\mathbb{Z}~\mathbb{R}$.  
Note that $R_\A$ is the unique (up to a
non-zero scalar multiple) element of $K(\A)\ot_\mathbb{Z} \mathbb{R}$
such that $X R_\A = R_A X = \FPdim(X) R_\A$ for all $X \in K(\A)$.

Clearly, $\FPdim(\A)$ and  $\FPdim(X)$ for non-zero $X\in \A$  are positive algebraic integers.
It was shown in \cite[Corollary 8.54]{ENO1} that they are, in fact, cyclotomic integers. 

\begin{definition}
\label{def:integral}
We say that a fusion category $\A$ is {\em integral} if $\FPdim(X)\in \mathbb{Z}$ for every
object $X \in \A$.  
\end{definition}

\begin{remark}
The Frobenius-Perron dimensions in categories considered in Examples~\ref{ex VecGw} -- \ref{ex qHopf}
coincide with vector space dimensions, so these categories are integral. 
Conversely, if $\A$ is an integral  fusion category 
then $\A$ is equivalent to the representation category of some semisimple quasi-Hopf algebra $Q$
(note that this $Q$ is not unique) by \cite[Theorem 8.33]{ENO1}. In this case we can view $R_\A$
as an element of $K(\A)$, namely as the class of the regular representation of $Q$. 
\end{remark}

\begin{remark}
Note a difference between Frobenius-Perron dimensions and dimensions defined by formula
\eqref{old friend}. The former takes values in $\mathbb{R}$ while the latter take values in $\kk$.
So these dimensions are not equal in  general.
\end{remark}

\begin{theorem}
\label{thm Lagrange functor}
Let $F :\A \to \B$ be a surjective tensor functor between fusion categories. Then
the ratio $\FPdim(\A)/\FPdim(\B)$  is an algebraic integer $\geq 1$.
\end{theorem}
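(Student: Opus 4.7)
The plan is to study $b := F(R_\A) \in K(\B)\otimes_\BZ \BR$ and to prove that $b$ is a positive multiple of $R_\B$; the multiplier will turn out to be exactly $\FPdim(\A)/\FPdim(\B)$, and one checks separately that it is an algebraic integer $\geq 1$.

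I would first collect three basic properties of $b$. Since tensor functors preserve Frobenius--Perron dimension of objects,
\[
b \;=\; \sum_{X\in \O(\A)} \FPdim(X)\, F(X) \;=\; \sum_{Y\in \O(\B)} b_Y\, Y,
\]
where $b_Y = \sum_{X\in\O(\A)}\FPdim(X)\,[F(X):Y]\geq 0$. Surjectivity of $F$ ensures that each $Y\in \O(\B)$ appears as a summand of some $F(X)$, so in fact $b_Y > 0$ for every $Y$. Finally, applying $F$ to the central identity $X R_\A = \FPdim(X) R_\A$ in $K(\A)\otimes_\BZ \BR$ gives
\[
b\cdot F(X) \;=\; \FPdim(X)\, b \qquad \text{for every } X\in \A.
\]

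The key step is the identification $b = c\,R_\B$. For this I would set $X_{tot} := \bigoplus_{X\in\O(\A)} X$ and consider the matrix $T$ of right multiplication by $F(X_{tot})$ on $K(\B)\otimes_\BZ \BR$ in the basis $\O(\B)$. Its entry $T_{Z,W} = [W\otimes F(X_{tot}):Z]$ is strictly positive: by surjectivity of $F$ every simple $Y\in\O(\B)$ appears in $F(X_{tot})$, so taking $Y$ to be any simple summand of $W^*\otimes Z$ forces $T_{Z,W} \geq [W\otimes Y:Z] = \dim\Hom(Y,W^*\otimes Z) > 0$. The Perron--Frobenius theorem then guarantees that the spectral radius of $T$ is a simple eigenvalue with a one-dimensional eigenline. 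Both $b$ (by the identity above, summed over $X\in\O(\A)$) and $R_\B$ (since $R_\B\cdot y = \FPdim(y) R_\B$ for every $y$) are strictly positive eigenvectors of $T$ with eigenvalue $\FPdim(X_{tot})$, so $b = c\,R_\B$ for some $c>0$. Comparing Frobenius--Perron dimensions, using $\FPdim(b) = \sum_X \FPdim(X)^2 = \FPdim(\A)$ and $\FPdim(R_\B) = \FPdim(\B)$, yields $c = \FPdim(\A)/\FPdim(\B)$.

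To finish I would read off the coefficient of $\be$ in $b = c\,R_\B$. Since $[R_\B:\be] = 1$,
\[
c \;=\; [b:\be] \;=\; \sum_{X\in\O(\A)} \FPdim(X)\,[F(X):\be].
\]
Each $\FPdim(X)$ is a cyclotomic integer by \cite[Corollary 8.54]{ENO1} and each $[F(X):\be]$ is a non-negative integer, so $c$ is an algebraic integer. The tensor structure on $F$ provides $F(\be)\cong \be$, contributing the term $\FPdim(\be)\cdot[F(\be):\be] = 1$ to the sum, while all remaining terms are non-negative; hence $c\geq 1$. The principal obstacle is the Perron--Frobenius step: one must both verify strict positivity of $T$, where the surjectivity hypothesis on $F$ enters essentially, and invoke the uniqueness clause of Perron--Frobenius to identify the positive eigenvectors $b$ and $R_\B$ up to scalar.
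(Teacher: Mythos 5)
Your proposal is correct, and its skeleton is the same as the paper's: push the virtual regular object through the induced map on $K(-)\otimes_{\mathbb{Z}}\mathbb{R}$, show the image is a positive multiple $c\,R_\B$, and compute $c$ twice --- once as the multiplicity of $\be$ (giving an algebraic integer $\geq 1$, with the $X=\be$ term supplying the $1$) and once via preservation of Frobenius--Perron dimensions (giving $\FPdim(\A)/\FPdim(\B)$). Where you genuinely diverge is in how the proportionality $F(R_\A)=c\,R_\B$ is justified. The paper asserts that $F$ induces a \emph{surjective} algebra homomorphism $f:K(\A)\otimes_{\mathbb{Z}}\mathbb{R}\to K(\B)\otimes_{\mathbb{Z}}\mathbb{R}$ and then invokes the uniqueness (up to scalar) of the element killed into an $\FPdim$-eigenvector by all of $K(\B)$; you instead note only that $f(R_\A)$ is a nonzero non-negative vector and an eigenvector of right multiplication by $F\bigl(\oplus_{X\in\O(\A)}X\bigr)$, whose matrix is strictly positive precisely because $F$ is categorically surjective, and conclude by the uniqueness clause of Perron--Frobenius. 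This is arguably the more robust route: categorical surjectivity of $F$ does \emph{not} in general make $f$ surjective as a linear map (e.g.\ restriction $\Rep(S_3)\to\Rep(\mathbb{Z}/3\mathbb{Z})$ is surjective as a tensor functor, yet the image of the induced map on Grothendieck rings has rank $2$ inside rank $3$), so the paper's one-line appeal to surjectivity of $f$ conceals exactly the positivity argument you spell out; what it buys the paper is brevity, and what your version buys is that the role of the surjectivity hypothesis is made explicit and the eigenline identification is airtight.
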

\begin{proof}
Note that $F$ induces  a surjective algebra homomorphism  
\[
f: K(\A)\ot_\mathbb{Z} \mathbb{R}\to K(\B)\ot_\mathbb{Z} \mathbb{R}. 
\]
We have $f(R_\A) = a R_\B$ for some  non-zero $a\in \mathbb{R}$.  Computing the multiplicty of $\be$
in both sides of the last equality we have
\[
a = \sum_{X\in \O(\A)}\, \FPdim(X) [F(X):1]. 
\]
On the other hand, since $f$ preserves  Frobenius-Perron dimensions, we have 
\[
a= \frac{\FPdim(R_\A)}{\FPdim(R_\B)} = \frac{\FPdim(\A)}{\FPdim(\B)}.  
\]
Comparing last two equalities gives the result.
\end{proof}

\begin{remark}
In  Theorem~\ref{thm Lagrange functor} one has $\FPdim(\A) = \FPdim(\B)$
if and only if $F$ is an equivalence. 
\end{remark}

A consequence of Theorem~\ref{thm Lagrange functor} is the following formula for the Frobenius-Perron
dimension of the equivariantization category:
\[
\FPdim(\A^G) = |G|\, \FPdim(\A). 
\]

The following result is an analogue  of Lagrange's Theorem in theory of groups and Hopf algebras,
see \cite[Proposition 8.15]{ENO1} and  \cite[Theorem 3.47]{EO}.

\begin{theorem}
\label{thm Lagrange}
Let $\A$ be a fusion category and let $\B \subset \A$ be a fusion subcategory. Then 
the ratio $\FPdim(\B)/\FPdim(\A)$  is an algebraic integer $\leq 1$.
\end{theorem}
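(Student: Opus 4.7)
The plan is to treat the two parts of the statement separately: the inequality $\FPdim(\B)/\FPdim(\A) \leq 1$, and the algebraic-integer property.

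For the inequality, I would argue directly from \eqref{FPdim def}. Because $\B \subset \A$ is a fusion subcategory, $K(\B)$ is a subring of $K(\A)$, and the Frobenius--Perron dimension — characterized as the unique $\mathbb{R}_{>0}$-valued ring homomorphism — restricts from $K(\A)$ to the corresponding homomorphism on $K(\B)$. Thus each $X\in O(\B)\subseteq O(\A)$ has the same Frobenius--Perron dimension whether computed in $\B$ or in $\A$, and
\[
\FPdim(\A) - \FPdim(\B) \;=\; \sum_{X\in O(\A)\smallsetminus O(\B)}\FPdim(X)^{2}\;\geq\;0.
\]

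For the algebraic-integer part, my strategy is to view $\A$ as a semisimple left $\B$-module category under the restricted tensor product $\otimes\colon \B\times \A\to\A$, and to decompose it into indecomposables
\[
\A\;\cong\;\bigoplus_i \M_i,
\]
where the $\M_i$ correspond to $\B$-orbits on $O(\A)$ under $X\mapsto B\otimes X$. The key structural input from the theory of module categories (Section~\ref{sect module}; cf.\ \cite{EO}) is the duality identity
\[
\FPdim(\M_i)\cdot\FPdim(\B^{*}_{\M_i})\;=\;\FPdim(\B),
\]
where $\B^{*}_{\M_i}$ is the dual fusion category. Since each $\FPdim(\B^{*}_{\M_i})$ is an algebraic integer, this displays $\FPdim(\B)/\FPdim(\M_i)$ as an algebraic integer, so each individual $\FPdim(\M_i)/\FPdim(\B)$ is the reciprocal of an algebraic integer.

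The main obstacle is passing from these per-orbit reciprocals to the single ratio
\[
\frac{\FPdim(\B)}{\FPdim(\A)}\;=\;\frac{1}{\sum_i 1/\FPdim(\B^{*}_{\M_i})},
\]
since a reciprocal of a sum of reciprocals of algebraic integers is not manifestly an algebraic integer. To close the argument, I would exhibit $\FPdim(\B)/\FPdim(\A)$ as a root of a monic polynomial with algebraic-integer coefficients, built from the matrix $M$ of left multiplication by $R_\B$ on $K(\A)\otimes\mathbb{R}$ in the basis $O(\A)$. The entries $\sum_{B\in O(\B)}\FPdim(B)\,N^{Z}_{BX}$ of $M$ are algebraic integers, and the two-sided eigen-identities $R_\B\cdot R_\A=R_\A\cdot R_\B=\FPdim(\B)R_\A$ together with Perron--Frobenius theory on each block $K(\M_i)\otimes\mathbb{R}$ pin down enough spectral structure to produce the desired algebraic equation. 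The delicate point here, and what makes this result stronger than a naive additivity computation, is combining the multiplicative relations on individual orbits with the global Perron--Frobenius data for $R_\A$ in a way that survives the passage to the ratio.
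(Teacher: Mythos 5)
Your first part is sound: the restriction of $\FPdim_{\A}$ to $K(\B)$ is a positive-valued ring homomorphism, hence coincides with $\FPdim_{\B}$ by uniqueness, and $\FPdim(\B)\leq\FPdim(\A)$ follows by dropping the terms indexed by $\O(\A)\smallsetminus\O(\B)$. The problems are all in the second part, and the first of them is that you are trying to prove something false. As printed, the theorem has the ratio inverted: taking $\B=\Vec\subset\A=\Vec_{\mathbb{Z}/2\mathbb{Z}}$ gives $\FPdim(\B)/\FPdim(\A)=1/2$, a rational number that is not a rational integer and hence not an algebraic integer. What is true --- and what \cite[Proposition 8.15]{ENO1} and \cite[Theorem 3.47]{EO} prove, and what the Lagrange analogy and the companion Theorem~\ref{thm Lagrange functor} demand --- is that $\FPdim(\A)/\FPdim(\B)$ is an algebraic integer $\geq 1$. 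The paper itself offers no proof of this theorem (it only cites those references), so there is nothing to compare your argument against; but no argument for the literal statement can exist, and the right move was to flag the misprint rather than to hunt for a monic polynomial satisfied by a number that is generically not an algebraic integer.

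Second, your ``key structural input'' $\FPdim(\M_i)\cdot\FPdim(\B^*_{\M_i})=\FPdim(\B)$ is not a theorem. The actual duality identity, \eqref{FPdim duality}, is $\FPdim(\B^*_{\M_i})=\FPdim(\B)$ for every indecomposable $\M_i$; combined with your identity this would force $\FPdim(\M_i)=1$ for all $i$, which is absurd (and $\FPdim(\M_i)$ is not even defined in the paper --- it requires choosing a normalization of the Frobenius--Perron eigenvector on $K(\M_i)$). Consequently the ``per-orbit reciprocals'' you work with do not exist in the form you use them. Finally, your closing paragraph is a description of a proof you hope exists, not a proof: no monic polynomial is exhibited and no mechanism is given by which the ``spectral structure'' produces one. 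For the corrected statement your opening move is the right one --- decompose $\A$ into indecomposable left $\B$-module categories $\M_i$ and study left multiplication by $R_{\B}$, which on each block $K(\M_i)\otimes_{\mathbb{Z}}\mathbb{R}$ is an irreducible nonnegative matrix $T_i$ with $T_i^2=\FPdim(\B)\,T_i$ and Perron--Frobenius eigenvector $\sum_{X\in\O(\M_i)}\FPdim(X)\,X$ --- but the crux, namely showing that each ratio $\bigl(\sum_{X\in\O(\M_i)}\FPdim(X)^2\bigr)/\FPdim(\B)$ is an algebraic integer so that their sum $\FPdim(\A)/\FPdim(\B)$ is one as well, is exactly the step you leave blank.
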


In particular, if $\A$ is a faithful $G$-extension of $\B$ then 
\[
\FPdim(\A) = |G|\, \FPdim(\B). 
\]
%
%\begin{proof}
%\end{proof}

%%%%%%%%%%%%%%%%%%%%%%%%%%%%%%%%%%%%%%%%%%%%%%%
\subsection{Ocneanu's rigidity}

The statement that a fusion category cannot be deformed is known as the Ocneanu
rigidity because its formulation and proof for unitary categories was suggested (but not published)
by Ocneanu. The following result was proved in \cite[Theorems 2.28 and 2.31]{ENO1}.

\begin{theorem}
\label{thm rigidity}
The number of fusion categories with a given Grothendieck ring is finite. 
The number of (equivalence classes of) tensor functors between a fixed pair
of fusion categories is finite.
\end{theorem}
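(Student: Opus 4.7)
My plan is to treat both statements via deformation theory, reducing finiteness of isomorphism classes to vanishing of an appropriate cohomology group. For the first statement, fix a based ring $R$ (the desired Grothendieck ring) together with multiplicity spaces $\Hom(X\otimes Y,Z)$ of the right dimensions indexed by simples. Then the set of associativity constraints satisfying the pentagon axiom \eqref{pentagon} is cut out by finitely many polynomial equations in finitely many variables, so it is the set of $\kk$-points of an affine scheme $X$ of finite type. Pivoting on a fixed choice of unit and duals, the gauge group $G$ of natural isomorphisms of the identity functor acts algebraically on $X$, and fusion category structures on $R$ up to tensor equivalence correspond to $G$-orbits in $X$. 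The first statement reduces to showing $X/G$ is finite.

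To show finiteness of orbits, I would verify that every $G$-orbit in $X$ is (Zariski) open; since a scheme of finite type has only finitely many irreducible components and each can contain at most one open orbit, this gives the claim. Openness of the orbit through $a\in X$ is equivalent to surjectivity of the derivative $\mathrm{Lie}(G)\to T_aX$; the cokernel of this map is exactly the third Davydov--Yetter deformation cohomology $H^3_{DY}(\A)$ of the fusion category $\A$ defined by $a$. So the entire argument reduces to the vanishing $H^3_{DY}(\A)=0$.

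For the second statement, fix fusion categories $\A,\B$. Because both are semisimple, an additive functor $F:\A\to \B$ is determined up to natural isomorphism by its multiplicities on simple objects; as $F$ must respect the Grothendieck ring structure at the level of the underlying functor, there are only finitely many possibilities for the underlying $F$. For each such $F$, the tensor structures $J$ satisfying \eqref{tenfun1} form an affine scheme of finite type, on which a gauge group of natural automorphisms acts, and tensor functors up to tensor natural isomorphism correspond to orbits. The analogous openness argument reduces this to the vanishing of the relative Davydov--Yetter cohomology $H^2_{DY}(F)$.

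The main obstacle, and the heart of the proof, is the cohomology vanishing. The strategy is to construct a contracting homotopy on the Davydov--Yetter complex using the categorical trace built from duals, evaluation/coevaluation, and the global dimension. Concretely, one builds an averaging operator from the virtual regular object $R_\A$, which acts as a separability idempotent; the required inverse of the global dimension exists because $\FPdim(\A)\neq 0$, and more importantly because the categorical global dimension of $\A$ is nonzero in characteristic zero (a separate result of Etingof--Nikshych--Ostrik used as a black box). This separability kills all positive Davydov--Yetter cohomology, giving in particular $H^3_{DY}(\A)=0$ and $H^2_{DY}(F)=0$, and hence both finiteness statements. Verifying the chain homotopy identities, and checking that the global dimension nondegeneracy really does yield a splitting of the coboundary maps, is where the real work lies.
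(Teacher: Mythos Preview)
The paper itself does not prove this theorem; it merely cites \cite[Theorems 2.28 and 2.31]{ENO1}. Your proposal is in fact an accurate outline of the argument given there: parametrize associators (respectively tensor structures) by an affine scheme of finite type, act by the gauge group, and deduce finiteness of orbits from openness of every orbit, which in turn follows from the vanishing of the relevant Davydov--Yetter cohomology. The vanishing is established in \cite{ENO1} exactly as you describe, via a contracting homotopy built from duality data, whose existence ultimately rests on $\dim(\A)\neq 0$ in characteristic zero (this is \eqref{dim not 0} in the present paper).

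Two minor comments. First, your finiteness-of-orbits deduction is phrased a bit loosely: the clean statement is that if \emph{every} orbit is open, then the orbits are pairwise disjoint nonempty open subsets of a Noetherian space, hence there are only finitely many; the aside about ``each irreducible component contains at most one open orbit'' is not what you actually use. Second, the invertibility you need is that of the \emph{categorical} global dimension $\dim(\A)$, not $\FPdim(\A)$; you do say this in the next clause, but the initial mention of $\FPdim$ is a red herring and should be dropped. With those adjustments, your plan matches the cited proof.
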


\begin{remark}
It follows from  Theorem~\ref{thm rigidity} that every fusion category $\A$ is defined over an algebraic number field.
That is,  the structure constants ($6$j symbols) of $\A$ can be written using algebraic numbers.  Therefore, for any 
$g\in Gal(\bar{\mathbb{Q}}/\mathbb{Q})$  there is a category $g(\A)$, the {\em Galois cojugate} of $\A$ 
obtained from $\A$ by conjugating its structure constants ($6$j symbols) by $g$. 
\end{remark}

\begin{corollary}
For every  positive number $M$  the number of fusion categories whose Frobenius-Perron dimension
is $\leq M$ is finite. 
\end{corollary}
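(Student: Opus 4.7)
The plan is to reduce the corollary to Theorem~\ref{thm rigidity} by showing that only finitely many Grothendieck rings can arise as $K(\A)$ for a fusion category $\A$ with $\FPdim(\A) \leq M$.

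First I would bound the rank. Since $\FPdim(X)$ is the Frobenius–Perron eigenvalue of a nonzero matrix with nonnegative integer entries, one has $\FPdim(X) \geq 1$ for every simple $X$. Combined with \eqref{FPdim def} this gives $|\O(\A)| \leq \FPdim(\A) \leq M$, and also $\FPdim(X) \leq \sqrt{M}$ for every $X \in \O(\A)$.

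Next I would bound the fusion coefficients. Applying $\FPdim$ to the decomposition $X\ot Y \cong \bigoplus_{Z\in \O(\A)} N^Z_{XY}\, Z$ and using $\FPdim(Z) \geq 1$ yields
\[
N^Z_{XY} \leq \FPdim(X)\FPdim(Y) \leq M
\]
for all simple $X,Y,Z$. Therefore the based ring $K(\A)$ is encoded by a finite tuple of nonnegative integers $(N^Z_{XY})$ of bounded length (at most $M^3$ entries) with each entry at most $M$. This produces only finitely many isomorphism classes of based rings that can occur as Grothendieck rings of fusion categories of Frobenius–Perron dimension at most $M$.

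Finally, Theorem~\ref{thm rigidity} says that for each such candidate Grothendieck ring there are only finitely many fusion categories realizing it. A finite union of finite sets is finite, which gives the corollary. The only mildly subtle point is ensuring the lower bound $\FPdim(X) \geq 1$ on simple objects, but this is standard Frobenius–Perron theory applied to the matrix $N^X$ (which has a nonzero entry in each row, since $X \ot Y$ is nonzero for every simple $Y$); everything else is bookkeeping.
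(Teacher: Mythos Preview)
Your proof is correct and follows essentially the same strategy as the paper: bound the data determining the Grothendieck ring and then invoke Ocneanu rigidity (Theorem~\ref{thm rigidity}). Your argument is in fact slightly cleaner: you explicitly bound the rank $|\O(\A)|\le M$ (which the paper leaves implicit) and you bound the fusion coefficients directly via $N_{XY}^Z \le \FPdim(X)\FPdim(Y)\le M$ using $\FPdim(Z)\ge 1$, whereas the paper reaches the same bound $N_{XY}^Z\le M$ by the Frobenius reciprocity trick $(N_{XY}^Z)^2 = N_{XY}^Z N_{X^*Z}^Y$ to cancel the ratio $\FPdim(Y)/\FPdim(Z)$.
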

\begin{proof}
In view of Theorem~\ref{thm rigidity} it suffices to show that the number of possible Grothendieck
rings of fusion categories of  Frobenius-Perron dimension $\leq M$ is finite. 
For any simple objects $X,\,Y,\, Z$
let $N_{XY}^Z$ denote the multiplicity of $Z$ in $X\ot Y$.  In any fusion category
of Frobenius-Perron dimension $\leq M$ we have
\[
N_{XY}^Z \leq \frac{\FPdim(X)\FPdim(Y)}{\FPdim(Z)}  \leq \frac{M\,\FPdim(Y)}{\FPdim(Z)},
\]
whence  $(N_{XY}^Z)^2 =   N_{XY}^Z N_{X^*Z}^Y \leq  M^2$.  Thus the structure constants 
of the Grothen\-dieck rings of the above class of fusion categories are uniformly bounded by $M$, 
so there are  finitely many Grothendieck rings.
\end{proof}

%%%%%%%%%%%%%%%%%%%%%%%%%%%%%%%%%%%%%%%%%%%%%%%
\subsection{Categorical dimension and pseudo-unitary categories}

Let $\A$ be a fusion category and let $X$ be an object of $\A$.
Given a morphism $a_X: X \to X^{**}$ we define its trace $\Tr(a_X)$
similarly to how it was done in  \eqref{old friend}:
\[
\Tr(a_X) : \be \xrightarrow{\coev_X} X \ot X^* \xrightarrow{a_X \ot \id_{X^*}} X^{**}\ot X^* \xrightarrow{\ev_X} \be.
\]
If $X$ is simple and $a_X: X \to X^{**}$ is an isomorphism then $\Tr(a_X)\neq 0$ 
and the quantity $|X|^2 =  \Tr(a_X) \Tr((a_X^{-1})^*) \neq 0$ does not depend on the choice of $a_X$
(here $|X|^2$ is regarded as  a single symbol and not as the square of a modulus).

Define the {\em categorical dimension} of $\A$ by
\begin{equation}
\label{dim def}
\dim(\A) = \sum_{X \in \O(\A)}\, |X|^2.
\end{equation}
Suppose $\kk = \mathbb{C}$, the field of complex numbers.  Then one can show that $|X|^2 > 0$
for every simple object $X$ in $\A$ and, consequently, 
\begin{equation}
\label{dim not 0}
\dim(\A)\neq 0
\end{equation}
for any fusion category $\A$,
see \cite[Theorem 2.3]{ENO1}.  Furthermore, for every simple $X\in \A$ one has 
\begin{equation}
\label{|X| bound}
|X|^2 \leq \FPdim(X)^2
\end{equation}
(see \cite[Proposition 8.21]{ENO1}) and, hence, the categorical dimension of $\A$ is dominated by its
Frobenius-Perron dimension :
\begin{equation}
\label{dim < FPdim}
\dim(\A) \leq  \FPdim(\A).
\end{equation}

\begin{definition}
A fusion category $\A$ over $\mathbb{C}$ is called {\em pseudo-unitary} if 
its categorical and Frobenius-Perron dimensions are equal, i.e.,  $\dim(\A) = \FPdim(\A)$.
\end{definition}

It follows from \eqref{|X| bound} that if $\A$ is pseudo-unitary then $|X|^2 =  \FPdim(X)^2$ for every simple object $X$. 

It is known  \cite{ENO4} that in any fusion category there is a canonical natural tensor isomorphism 
$g_X: X \xrightarrow{\sim} X^{****}$. Let $a_X: X \xrightarrow{\sim} X^{**}$ be a natural isomorphism 
such that $a_{X^{**}} \circ a_X = g_X$ (i.e., $a_X$ is a square root of $g_X$). 
%It is not known if it is possible to choose  $a_X$ to be a tensor isomorphism.
For all $X,\,Y,\, V \in \O(\A)$ let 
\begin{equation}
\label{bXY}
b_{XY}^V : \Hom_A(V,\, X\ot Y) \xrightarrow{\sim} \Hom_A(V^{**} ,\, X^{**}\ot Y^{**})
\end{equation}
be a linear isomorphism such that 
\begin{equation*}
a_X \ot a_Y =\bigoplus_{V \in \O(\A)}\, b_{XY}^V \ot a_V.
\end{equation*}
Note that the source and target of  \eqref{bXY} are canonically isomorphic so that we can regard
$b_{XY}^V$ as an automorphism of $\Hom_\A(V,\, X\ot Y) $.  The natural isomorphism $a_X$
is tensor (i.e., is a pivotal structure) if and only if $b_{XY}^V =\id$ for all $X,\,Y,\, V \in \O(\A)$. Since $a_X$ is a square
root of a tensor isomorphism $g_X$ we see that $(b_{XY}^V)^2 =\id$.  

The integers
\[
N_{XY}^V = \dim_{\mathbb{C}}\, \Hom_\A(V,\, X\ot Y) \qquad \mbox{and} \qquad
T_{XY}^V = \Trace (b_{XY}^V),
\]
where  $\Trace$ denotes the trace of a linear transformation,
satisfy inequality 
\begin{equation}
\label{T < N}
|T_{XY}^V|  \leq N_{XY}^V.
\end{equation} 
The equality $T_{XY}^V  = N_{XY}^V$ occurs if and only $b_{XY}^V =\id$, i.e., if and only if
$a_X$ is a pivotal structure. 

For any $X \in \O(\A)$ let  $d_X = \Tr(a_X)$ then
\[
d_X d_Y =\sum_{V \in \O(\A)}\, T_{XY}^V d_V.
\]  
Furthermore, $|X|^2 =  |d_X|^2$ for every $X\in \O(\A)$.

\begin{proposition}
\label{canonical spherical structure}
A pseudo-unitary fusion category admits a unique spherical structure $a_X: X\xrightarrow{\sim} X^{**}$
with respect to which $d_X = \FPdim(X)$ for every simple object $X$.
\end{proposition}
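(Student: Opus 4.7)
The plan is to start from the square-root system $a_X$ introduced in the text (with $a_{X^{**}}\circ a_X=g_X$), use pseudo-unitarity to pin down $|d_X|=\FPdim(X)$, and then rescale each $a_X$ by a unit complex scalar to produce a pivotal structure whose dimensions are the Frobenius--Perron dimensions.

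First I would observe that from $\dim(\A)=\FPdim(\A)$ and the bound $|X|^2\le \FPdim(X)^2$ summed over $\O(\A)$, equality must hold for every simple $X$; combined with $|X|^2=|d_X|^2$ this yields $|d_X|=\FPdim(X)$. Set $\eta_X := d_X/\FPdim(X)$, so $|\eta_X|=1$. Next I would exploit $d_X d_Y = \sum_V T_{XY}^V d_V$ together with the bound $|T_{XY}^V|\le N_{XY}^V$ and the Frobenius--Perron identity $\FPdim(X)\FPdim(Y)=\sum_V N_{XY}^V\FPdim(V)$ in the chain
\[
\FPdim(X)\FPdim(Y) = |d_X d_Y| \le \sum_V |T_{XY}^V|\,\FPdim(V) \le \sum_V N_{XY}^V\,\FPdim(V) = \FPdim(X)\FPdim(Y),
\]
which forces equality throughout. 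Since $b_{XY}^V$ is an involution on the $N_{XY}^V$-dimensional space $\Hom_\A(V, X\ot Y)$ whose trace is $T_{XY}^V$, the outer equality forces $b_{XY}^V = \eps_{XY}^V\,\id$ with $\eps_{XY}^V\in\{\pm 1\}$; the triangle-inequality equality then yields the phase compatibility
\[
\eta_V = \eps_{XY}^V\,\eta_X\eta_Y \qquad \text{whenever } N_{XY}^V\ne 0.
\]

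For existence, I would set $\widetilde a_X := \eta_X^{-1}a_X$ on each simple $X$ and extend additively to all objects. Substituting into $a_X \otimes a_Y = \bigoplus_V b_{XY}^V \otimes a_V$ and applying the compatibility above, the corresponding coefficients for $\widetilde a$ read
\[
\widetilde b_{XY}^V = \eta_V(\eta_X\eta_Y)^{-1}\eps_{XY}^V\,\id = (\eps_{XY}^V)^2\,\id = \id,
\]
so $\widetilde a$ is a tensor natural isomorphism $\id \Rightarrow (\cdot)^{**}$, i.e.\ a pivotal structure. Its traces are $\widetilde d_X = \eta_X^{-1}d_X = \FPdim(X)$, which are positive and satisfy $\widetilde d_X = \FPdim(X) = \FPdim(X^*) = \widetilde d_{X^*}$, so $\widetilde a$ is spherical. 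For uniqueness, any two pivotal structures differ by a tensor automorphism of the identity functor, which acts on each simple $X$ by a scalar $\psi_X\in \kk^\times$ multiplicatively compatible with fusion; if both structures yield $d_X = \FPdim(X)$, then $\psi_X\FPdim(X)=\FPdim(X)$ forces $\psi_X=1$ for every simple $X$.

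The main obstacle will be the triangle-inequality analysis in the second paragraph: it requires combining the identity for $d_X d_Y$, the bound $|T_{XY}^V|\le N_{XY}^V$, the involutivity of $b_{XY}^V$, and the Frobenius--Perron identity simultaneously in order to extract both the integrality statement $b_{XY}^V=\pm\id$ and the phase relation linking $\eta_X,\eta_Y,\eta_V$. Once those two ingredients are in hand, the rescaling $a_X \mapsto \eta_X^{-1}a_X$ delivers the required spherical structure essentially for free, and uniqueness is immediate.
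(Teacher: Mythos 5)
Your argument is correct and is essentially the paper's own proof: the equality chain forcing $T_{XY}^V=\pm N_{XY}^V$ together with the phase alignment of $\eta_X$ is exactly the paper's observation that $d_Xd_Y/d_V$ is real, and your rescaling $a_X\mapsto \eta_X^{-1}a_X$ coincides with the paper's replacement of $a_X$ by $a_X\circ\tau_X$ with $\tau_X=\frac{|d_X|}{d_X}\id_X$ (obtained there by twisting $g_X$ by the tensor automorphism $\sigma_X$). The only difference is cosmetic: you verify tensoriality directly via $\widetilde b_{XY}^V=\id$ and you spell out the uniqueness step, which the paper's proof leaves implicit.
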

\begin{proof}
Let $\A$ be a pseudo-unitary fusion category. Let $g_X :  X \xrightarrow{\sim} X^{****}$
be a tensor isomorphism 
and $a_X: X \xrightarrow{\sim} X^{**}$ be its square root as above.   The idea
of the proof is to twist $g_X$ by an appropriate tensor automorphism of the identity 
endofuctor of $\A$ in such a way that  the dimensions corresponding to the square
root of the resulting  isomorphism  become positive real numbers. 

We have  $|d_X| = \FPdim(X)$ for any simple object $X$, therefore,  using \eqref{T < N} we obtain:
\begin{eqnarray*}
\FPdim(X) \FPdim(Y) &=& |d_X d_Y | = |\sum_{V\in \O(\A)}\, T_{XY}^V d_V | \\
 &\leq&  \sum_{V\in \O(\A)}\, N_{XY}^V \FPdim(V) = \FPdim(X) \FPdim(Y),
\end{eqnarray*}
for all $X,\, Y \in \O(\A)$. Hence, the inequality in the above chain is an equality,
i.e., $T_{XY}^V = \pm N_{XY}^V$ and  the ratio $\frac{d_Xd_Y}{d_V}$ is a real number whenever
$N_{XY}^V \neq 0$.  Thus, $\frac{d_X^2d_Y^2}{d_V^2}$
is a positive number whenever $V$ is contained in $X\ot Y$.

The latter property is equivalent to  $\sigma_X:=\frac{|d_X|^2}{d_X^2} \id_X$ being a tensor automorphism of the identity 
endofunctor of $\A$.  Let us replace $g_X$ by  $g_X \circ \sigma_X$ (so it  is still a tensor isomorphism 
$X\xrightarrow{\sim} X^{****}$).   The square  root of the latter is $a_X \circ  \tau_X$, where
$\tau_X = \frac{|d_X|}{d_X} \id_X$. The dimensions corresponding to $a_X \circ  \tau_X$ are now such that
$d_X = |d_X|$, i.e., are positive real numbers. This forces $T_{XY}^V = N_{XY}^V$. 
Thus,  $a_X \circ  \tau_X$ is a spherical structure on $\A$.
\end{proof}

%%%%%%%%%%%%%%%%%%%%%%%%%%%%%%%%%%%%%%%%%%%%%%
%%%%%%%%%%%%%%%%%%%%%%%%%%%%%%%%%%%%%%%%%%%%%%
%%%%%%%%%%%%%%%%%%%%%%%%%%%%%%%%%%%%%%%%%%%%%%
\section{Module categories and categorical Morita equivalence} 
\label{dualmcat}

%%%%%%%%%%%%%%%%%%%%%%%%%%%%%%%%%%%%%%%%%%%%%%
\subsection{Definitions and examples}
\label{sect module}
Let $\A$ be a fusion category.

\begin{definition}
\label{def module cat}
A {\it left module category} over $\A$ is a category $\M$ equipped with an {\em action (or module product) bifunctor} 
$\otimes : \A\times \M\to \M$ along   natural isomorphisms
\begin{equation}
\label{natm}
m_{X,Y,M}: (X\ot Y)\ot M \xrightarrow{\sim} X \ot (Y \ot M), %\qquad X,Y\in \A,\, M\in \M
\end{equation}
and 
\begin{equation}
u_M: \be \ot M   \xrightarrow{\sim}  M,%\qquad  M\in \M,
\end{equation}
called {\em module associativity} and {\em unit constraints}
such   that the  the  following diagrams: 
\begin{equation}
\label{pentagonm}
\xymatrix{
&((X\ot Y)\ot Z)\ot M  \ar[dl]_{a_{X,Y,Z}\otimes \id_M}
\ar[dr]^{m_{X\otimes Y,Z,M}}&\\
 (X \ot (Y \ot Z))\ot M \ar[d]_{m_{X, Y\ot Z, M}} 
&& (X \ot Y) \ot (Z \ot  M) \ar[d]^{m_{X, Y, Z\ot M}} 
\\
 X \ot ((Y\ot Z) \ot M) 
\ar[rr]^{\id_X \ot m_{Y,Z,M}} &&
X \ot (Y \ot (Z\ot M))
}
\end{equation}
and
\begin{equation} 
\xymatrix{
(X \ot \be)\ot M \ar[rr]^{m_{X,\be ,M}} \ar[dr]_{r_X \ot \id_M}  && X \ot (\be \ot M) \ar[dl]^{\id_X \ot u_M}\\
& X \ot M &
}
\end{equation}
commute for all objects $X,Y,Z$ in $\A$ and $M$ in $\M$.
\end{definition}

\begin{definition} 
\label{modfun}
Let $\M$ and $\N$ be two module categories over $\A$ with module associativity
constraints  $m$ and $n$, respectively. An {\em $\A$-module functor}
from $\M$ to $\N$ is a pair $(F,\,s)$, where $F: \M\to \N$  is a functor and
\[
s_{X,M}: F(X\ot M)\to X\ot F(M),%\quad X\in \A,\, M\in \M,
\] 
is a natural isomorphism  such that the following diagrams
\begin{equation}
\label{pentagonf}
\xymatrix{
&& F((X\ot Y)\ot M)   \ar[dll]_{F(m_{X,Y,M})} \ar[drr]^{s_{X\ot Y, M}}  &&\\
F(X\ot (Y\ot M))  \ar[d]_{s_{X,Y\ot M}} &&&& (X\ot Y)\ot F(M)
\ar[d]^{n_{X,Y,F(M)}} \\
X \ot F(Y\ot M) \ar[rrrr]^{\id_X \ot s_{Y,M}}  &&&& X \ot (Y \ot F(M))
}
\end{equation}
and 
\begin{equation}
\label{trianglef}
\xymatrix{
F(\be \ot M)\ar[rr]^{s_{\be,M}} \ar[dr]_{F(l_M) 
}&&\be \ot F(M)
\ar[dl]^{l_{F(M)}}\\ &F(M)&
}
\end{equation}
commute for all $X,Y\in \C$ and $M\in \M$.

A {\em module equivalence} of $\A$-module categories
is an $\A$-module functor  that is an equivalence of
categories. 
\end{definition}

%Clearly, this definition categorifies the notion of a homomorphism (respectively, isomorphism) of modules. 
%See \cite{O} for details and for the definitions of
%$\A$-module functors and their natural transformations.

\begin{definition} 
\label{module nat morph}
A morphism between $\A$-module functors $(F,\,s)$ and  $(G,\,t)$ is a natural transformation $\nu$
from $F$ to $G$ such that the following diagram commutes for any $X\in \A$ and
$M\in \M$:
\begin{equation}
\xymatrix{
F(X\ot M) \ar[rr]^{s_{X,M}}  \ar[d]_{\nu_{X\ot M}}&&  
X\ot F(M) \ar[d]^{\id_X\ot \nu_M} \\
G(X\ot M) \ar[rr]^{t_{X,M}}  &
& X\ot G(M).
}
\end{equation}
\end{definition}

An $\A$-module category is called {\em indecomposable} if it is not
equivalent to a direct sum of two non-trivial $\A$-module categories.

Every $\A$-module category is completely
reducible, i.e., if $\M$ is an $\A$-module category and $\N \subset \M$
is a full $\A$-module subcategory then there exists a full $\A$-module subcategory
$\N' \subset \M$ such that $\M =\N \oplus \N'$.

A typical example of a left $\A$-module category is the category $\A_A$
of  right modules over a separable algebra $A$ in $\A$ \cite{O}.

\begin{example} 
\label{RepG-modules}
Let $G$ be a finite group and let $L\subset G$ be a subgroup, and  
let $\psi\in Z^2(L,\,\kk^\times)$ be a $2$-cocycle on $L$.
By definition, a {\em projective representation} of $L$ on a vector space $V$ 
with the {\em Schur multiplier} $\psi$ is a map $\rho: G \to GL(V)$
such that $\rho(gh) = \psi(g,\,h) \rho(g)\circ \rho(h)$ for all $g,h\in L$.
Let  $\Rep_\psi(L)$ denote the abelian category of projective representations of $L$ with the Schur 
multiplier $\psi$.
The usual tensor
product and associativity and unit constraints determine on $\Rep_\psi(L)$
the structure of a $\Rep(G)$-module category. 
It is known that any indecomposable
$\Rep(G)$-module category is equivalent to  one of this form \cite{O}.
\end{example}

\begin{example}
\label{VecG-modules}
Let $\C= \Vec_G$, where $G$ is a group. 
In this case, a module category $\M$ over $\C$ is an abelian 
category $\M$ with a collection of functors 
\[
F_g: M \mapsto \delta_g\ot M : \M\to \M,
\] 
along with a collection of tensor functor isomorphisms 
\[
\eta_{g,h}: F_g\circ F_h\to F_{gh}, \quad g,h\in G, 
\] 
satisfying the 2-cocycle relation: 
$\eta_{gh,k}\circ \eta_{gh}=\eta_{g,hk}\circ \eta_{hk}$ 
as natural isomorphisms $F_g\circ F_h\circ F_k\xrightarrow{\sim} F_{ghk}$
for all $g,h,k\in G$.

%Recall from Definition~\ref{action} that such data is called an {\em action} of $G$ 
%on $\M$. 
Thus, a module category over $\Vec_G$ is the same thing as an
abelian category with an action of $G$, cf.\ Definition~\ref{def:action}.

Let us describe indecomposable  $\Vec_G$-module categories explicitly. 
In any such category $\M$ the set of simple objects is a transitive $G$-set
$X=G/L$, where a subgroup $L\subset G$ is determined up to a conjugacy.
Let us identify the space of functions $\Fun(G/L,\, \kk^\times)$ with the coinduced 
module $\mbox{Coind}_L^G\, \kk^\times$. 
The $\Vec_G$-module associativity constraint on $\M$ defines a function
\[
\Psi : G \times G \to \mbox{Coind}_L^G\, \kk^\times.
\] 
The pentagon axiom \eqref{pentagonm} says  that $\Psi \in Z^2(G, \, \mbox{Coind}_L^G\, \kk^\times)$.
Clearly, the equivalence class of $\M$ depends only on the  cohomology class of $\Psi$
in $H^2(G, \, \mbox{Coind}_L^G\, \kk^\times)$.  By Shapiro's Lemma the restriction map
\[
Z^2(G, \, \mbox{Coind}_L^G\, \kk^\times) \to Z^2(L,\, \kk^\times) : \Psi \mapsto \psi
\]
induces an isomorphism $H^2(G, \, \mbox{Coind}_L^G\, \kk^\times) \xrightarrow{\sim} H^2(L,\, \kk^\times)$.

Thus, an indecomposable $\Vec_G$-module category is determined by a pair $(L,\, \psi)$,
where $L\subset G$ is a subgroup  and $\psi \in H^2(L,\, \kk^\times)$. Let $\M(L,\, \psi)$
denote the corresponding category. 
\end{example}

\begin{remark}
\label{Not a coincidence}
Note that indecomposable $\Rep(G)$-module categories in Example~\ref{RepG-modules}
and indecomposable $\Vec_G$-module categories in  Example~\ref{VecG-modules}
are parameterized by the same data. We will see in Section~\ref{sect duality}
that this is not merely a coincidence.
\end{remark}

\begin{example}
\label{VecGw-modules}
This is a generalization of Example~\ref{VecG-modules}. Here we describe
indecomposable module categories over pointed fusion categories. Recall
that the latter categories are equivalent to $\Vec_G^{\omega}$ for some
finite group $G$ and  a $3$-cocycle  $\omega \in Z^3(G, \, \kk^\times)$.

Equivalence classes of indecomposable right $\Vec_G^{\omega}$-module categories 
correspond to  pairs
$(L, \, \psi)$, where $L$ is a subgroup of $G$ such that
$\omega|_{L \times L \times L}$ is cohomologically trivial
and $\psi \in C^2(L, \, \kk^\times)$ is a $2$-cochain satisfying 
$\delta^2\psi = \omega|_{L \times L \times L}$. The corresponding
$\Vec_G^{\omega}$-module category is constructed as follows.
Given a pair $(L, \, \psi)$ as above define an algebra 
\begin{equation}
\label{eqR}
A(L, \psi) =\bigoplus_{a\in L} \delta_a
\end{equation}
in $\Vec_G^\omega$ with the multiplication
\begin{equation}
\label{algebra R}
\bigoplus_{a,b\in L}\, \psi(a,\,b) \id_{\delta{ab}} :  
A(L,\, \psi) \ot A(L,\, \psi) \to A(L,\, \psi).
\end{equation}
Let $\M(L,\, \psi)$ denote the category of left $A(L,\, \psi)$-modules in $\Vec_G^\omega$.
Any $\Vec_G^\omega$-module category is equivalent to some $\M(L,\, \psi)$.
%The rank of $\M(L,\, \psi)$ is equal to the index of $L$ in $G$.
%Two $\Vec_G^\omega$-module categories $\M(L,\, \psi)$ and $\M(L',\, \psi')$ 
%are equivalent if and only if there is $g\in G$ such that
%$L'=gLg^{-1}$  and $\psi$ and the $g$-conjugate of $\psi'$ differ by
%a coboundary.
\end{example}

\begin{remark}
Two $\Vec_G^\omega$-module categories $\M(L,\, \psi)$ and $\M(L',\, \psi')$ 
are equivalent if and only if there is $g\in G$ such that
$L'=gLg^{-1}$  and  and $\psi'$ is cohomologous to $\psi^g$
in $H^2(L',\, \kk^\times)$,
where $\psi^g(x,\,y):= \psi(gxg^{-1},\, gyg^{-1})$ for all $x,\,y\in L$. Here we abuse notation and identify
$\psi$ and $\psi'$ with cocylces representing them.
\end{remark}

\begin{example}
Let $H$ be a semisimple Hopf algebra. Example~\ref{VecG-modules} was generalized in 
\cite{AMo} where it was shown that  indecomposable
$\Rep(H)$-module categories are classified by left
$H$-comodule algebras that are $H$-simple from the right and with the trivial space of coinvariants.
This is another generalization of Example~\ref{VecG-modules}.

Here is an other generalization of Example~\ref{VecG-modules}.

Let $H$ be a semisimple Hopf algebra and let $B \subset A$ be a left faithfully flat $H$-Galois extension with 
$B$ semisimple. Let $\M_B$ and $\M^H$ denote  the fusion categories of right $B$-modules and right $H$-comodules, respectively. 
Recall that the category of right Hopf $(H,\, A)$-modules $\M^H_A$ is by deÞnition the category 
of right $A$-modules over $\M^H$. By Schneider's structure theorem \cite{Schn}
the functor 
\[
\M_B \to (\M^H )_A :  M \mapsto  M \ot_B A, 
\]
is a category equivalence with inverse $M \to  M^{co\,H}$, where $M^{co\, H}$ denotes
the subspace of coinvariants.
So $\M_B$ has an $\M^H$-module category structure.
\end{example}

%%%%%%%%%%%%%%%%%%%%%%%%%%%%%%%%%%%%%%%%%%%%%%%%%%%
\subsection{Duality for fusion categories and categorical Morita equivalence} 
\label{sect duality}

Let $\A$ be a fusion category and  let $\M$ be an indecomposable left $\A$-module category.
The category $\A^*_\M$ of $\A$-module endofunctors of $\M$ has a tensor category
structure with a tensor product given by the composition of functors and  the unit object  
being the identity functor. It is also a rigid category with the duals of a functor
being its adjoints (thanks to the rigidity of $\A$  an adjoint of an $\A$-module functor has a natural  
$\A$-module functor structure).

It was shown in \cite[Theorem 2.18 ]{ENO1} that $\A^*_\M$
is a fusion category. The category $\A^*_\M$  is called the {\em dual} category of $\A$
with respect to $\M$. Furthermore, $\M$ has a natural structure of an $\A^*_\M$-module category and there is
a canonical tensor equivalence 
\[
(\A^*_\M)^*_\M \cong \A.
\] 
The Frobenius-Perron dimension is invariant under duality, i.e., 
\begin{equation}
\label{FPdim duality}
\FPdim(\A) =\FPdim(\A^*_\M).
\end{equation}

\begin{example}
Let $A$ be a separable  algebra in $\A$ and let $\M$ be the category of right $A$-modules in $\A$.
Then $(\A^*_\M)^\op$ is tensor equivalent to the category of $A$-bimodules in $\A$.  The tensor product of
the latter category is $\ot_A$ and the unit object is the regular $A$-module. 
\end{example}

\begin{definition}
Let $\A,\, \B$ be fusion categories. We say that $\A$ and $\B$ are {\em categorically
Morita equivalent} if there is an $\A$-module category $\M$ such that $\B\cong (\A^*_\M)^\op$.
\end{definition}

It was shown in \cite{Mu4} that categorical Morita equivalence is indeed an equivalence relation.

\begin{remark}
\label{integral Morita}
The class of integral fusion categories (see Definition~\ref{def:integral}) 
is closed under Morita equivalence  \cite[Theorem 8.35]{ENO1}. 
\end{remark} 

Given a pair of left $\A$-module categories let $\Fun_\A(\M,\, \N)$ denote the category of $\A$-module
functors from $\M$ to $\N$. In particular, $\A^*_\M = \Fun_\A(\M,\, \M)$.  The assignment
\[
\N \mapsto \Fun_\A(\M,\, \N)
\] 
defines a $2$-equivalence between the $2$-category of left $\A$-module categories and
that of right $\A^*_\M$-module categories \cite{EO, Mu-I}.  
This explains the observation we made in Remark~\ref{Not a coincidence}.

Below we give examples of  categorical Morita equivalence.

\begin{example}
Any fusion category $\A$ can be viewed as the regular left module category over itself.  It is easy to see
that in this case left $\A$-module functors are precisely functors of the right multiplication by objects of $\A$,
whence $\A^*_\A = \A^\op$.
\end{example}

\begin{example}
Any fusion category $\A$ can be viewed as an $\A \bt \A^\op$-module category with the actions
of $\A$   given by left and right tensor multiplications. The dual category
$(\A \bt \A^\op)^*_\A$ is the center of $\A$, see  Section~\ref{Center section} below. 
\end{example}

\begin{example}
\label{VecG dual}
Let $G$ be a finite group and let $\A =\Vec_G$ be the category of $G$-graded
vector spaces. The category $\Vec$ is a 
$\Vec_G$-module category via the forgetful tensor functor $\Vec_G \to \Vec$.
Let us determine the dual category  $(\Vec_G)^*_{\Vec}$.  Unfolding the definition 
of a module functor, we see that a $\Vec_G$-module
endofunctor   $F: \Vec\to \Vec $ is determined by a vector space $V:= F(\kk)$ and a collection
of isomorphisms 
\[
\pi_g \in \Hom_{\Vec}(F(\delta_g \ot k),\,\delta_g\ot F(\kk)) =\End_\kk(V),\, g\in G.
\]
It follows from axiom~\eqref{pentagonf}  in Definition~\ref{modfun}  of module functor
that the map 
\[
g \mapsto \pi_g : G \to GL(V)
\]
is a representation of $G$ on $V$. 
Conversely, any such representation determines a $\Vec_G$-module
endofunctor of $\Vec$.  It is easy to check that homomorphisms of representations
are precisely morphisms between the corresponding module functors.
Thus, $(\Vec_G)^*_{\Vec} \cong \Rep(G)$, i.e., the categories $\Vec_G$ and $\Rep(G)$ 
are categorically  Morita equivalent.
\end{example}

\begin{example}
This is a generalization of the previous example.
Let $H$  be a finite-dimensional Hopf algebra.  The fiber functor $\Rep(H)\to \Vec$
makes $\Vec$ a $\Rep(H)$-module category and ${\Rep(H)^*}_{\Vec} \cong \Rep(H^*)$.  
Thus, $\Rep(H)$ and $\Rep(H^*)$ are categorically  Morita equivalent. This means that
categorical duality extends the notion of Hopf algebra duality. 
\end{example}

\begin{example}
\label{equiv dual grad}
Let $G$ be a finite group and let $g\mapsto T_g$ be an action of $G$  on a fusion category $\A$. 
The forgetful tensor functor  $\A^G \to \A$
turns $\A$ into an $\A^G$-module fusion category. The dual category $(\A^G)^*_\A$ is the {\em crossed product}
category  $\A \rtimes  G$ defined as follows.   As an Abelian category  $ \A \rtimes  G = \A \bt \Vec_G$.
The tensor product is given by
\begin{equation}
(X \bt \delta_g) \ot (Y \bt \delta_h): = (X \ot T_g(Y)) \bt \delta_{gh},\qquad X,Y \in \A,\quad g,h\in G.
\end{equation}
The unit object is $\be \bt \delta_e$ and the associativity and unit constraints
come from those of $\A$.

Note that $\C \rtimes G$ is a $G$-graded fusion category,
\[
\A \rtimes G =\bigoplus_{g\in G}\, (\A \rtimes G)_g,\qquad \mbox{ where }     
(\A \rtimes G)_g= \C \ot (\be \bt \delta_g).
\]
In particular, $\A \rtimes G$ contains $\A =\A \ot (\be \bt \delta_e)$ as a fusion subcategory.

In the case when $\A =\Vec$ we recover the duality of Example~\ref{VecG dual}.
\end{example}

%%%%%%%%%%%%%%%%%%%%%%%%%%%%%%%%%%%%%%%%%%%%%%%%%%%
\subsection{(Weakly) group-theoretical fusion categories} 
\label{sect group-theor}

In this Section we use categorical Morita equivalence to introduce two classes of categories 
important for  classification of fusion categories of integer  
Frobenius-Perron dimension. 

\begin{definition}
\label{def:group-theoretical}
A fusion category is called {\em group-theoretical} if it is categorically Morita equivalent 
to a pointed fusion category. 
\end{definition}

In other words, a group-theoretical fusion category  is equivalent to  $(\Vec_G^\omega)^*_{\M(L,\, \psi)}$,
where $\M(L,\, \psi)$ is  the $\Vec_G^\omega$-module category from Example~\ref{VecGw-modules}. 

\begin{example}
\label{gt example}
The category $(\Vec_G^\omega)^*_{\M(L,\, \psi)}$ can be described quite  explicitly  
in terms of finite groups and their cohomology as the category
of  $A(L, \psi)$-bimodules in $\Vec_G^\omega$, where $A(L, \psi)$ is the algebra
introduced in \eqref{eqR} (see \cite[Proposition 3.1]{O2}). 

For example, simple objects of  $(\Vec_G^\omega)^*_{\M(L,\, \psi)}$ can be described as follows.
  
For any $g\in G$ the group $L^g:= L\cap g L g^{-1}$ 
has a well-defined $2$-cocycle
\begin{eqnarray*}
%\begin{split}
\psi^g(h,h') :&=&  \psi(h,h') \psi(g^{-1}h'^{-1}g,\, g^{-1}h^{-1}g)
\omega(hh'g,\, g^{-1}h'^{-1}g,\, g^{-1}h^{-1}g)^{-1} \\
& & \times \,  \omega(h,\,h',\,g) \omega(h,\, h'g,\, g^{-1}h'^{-1}g),\qquad h,h'\in L^g.
%\end{split}
\end{eqnarray*}
%Let $\{g_i\}_{i\in H_1\backslash G  / H_2}$ be a set of representatives of 
%two-sided $(H_1,\, H_2)$-cosets in $G$.
One can  check that irreducible $A(L, \psi)$-bimodules in $\Vec_G^\omega$
are parameterized by  pairs $(Z,\, \pi)$, where $Z$ is a  double $L$-coset 
in $G$ and $\pi$ is an irreducible projective representation 
of $L^{g}$ with the Schur multiplier $\psi^{g},\, g\in Z$. 
\end{example}

\begin{example}
Let $G,\,L$ be groups and let $H$ be a semisimple Hopf algebra that fits into an exact
sequence 
\[
1 \to \kk^G \to H \to \kk L \to 1,
\]
where $\kk^G$ is the commutative Hopf algebra of functions on $G$ and $\kk L$ is the cocommutative
group Hopf algebra of $L$ (i.e., $H$ is an extension of $\kk L$ by $\kk^G$). 
It was shown  in \cite{Nat} that $\Rep(H)$ is a group-theoretical
fusion category. 
\end{example}

\begin{remark}
The class of group-theoretical fusion categories is not closed under equivariantizations \cite{GNN, Ni}.
In particular, the class of Hopf algebras  with group-theoretical representation
categories is not closed under Hopf algebra extensions. 

The smallest  example of  a semisimple Hopf algebra whose representation category is not  
group-theoretical  has dimension $36$ \cite{Ni}. 
\end{remark}

\begin{remark}
In view of Remark~\ref{integral Morita}, group-theoretical categories are integral.
\end{remark}

Recall that the notion of a nilpotent fusion category was defined in Section~\ref{GT construction sect}. 
The following definition was given in \cite{ENO2}. 

\begin{definition}
\label{def:weakly group-theoretical}
A fusion category is {\em weakly group-theoretical} if it is categorically Morita equivalent 
to a nilpotent fusion category.  A fusion category is {\em solvable} if it is categorically Morita equivalent 
to a cyclically nilpotent fusion category.
\end{definition}

Here is a list of properties of solvable categories (see \cite[Proposition 4.4]{ENO2}).

\begin{proposition}
\label{basicp2-bis}
\begin{enumerate}
\item[(i)] The class of solvable categories is closed
under taking extensions and equivariantizations by solvable
groups, Morita equivalent categories, tensor products, 
subcategories and component categories of 
quotient categories. 
\item[(ii)]
The categories $\Vec_{G,\omega}$ and $\Rep(G)$ are solvable
if and only if $G$ is a solvable group.
\item[(iii)]
A braided nilpotent fusion category is solvable.
\item[(iv)] 
A solvable fusion category $\A\ne \Vec$ contains a nontrivial
invertible object. 
\end{enumerate}
\end{proposition}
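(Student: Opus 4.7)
The plan is to treat the four assertions in order. Items (i) and (ii) are essentially formal once the right cyclic nilpotent filtrations on Morita representatives are in hand; (iii) requires one structural input from the theory of braided fusion categories; and (iv) is the subtlest, relying on the Drinfeld center as a complete Morita invariant.

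For (i), the several closure properties are handled separately. Closure under Morita equivalence is built into the definition. For tensor products, given cyclic nilpotent filtrations $\Vec=\A_0\subset\cdots\subset\A_m=\A$ and $\Vec=\B_0\subset\cdots\subset\B_n=\B$ of Morita representatives, the tower $\A_0\boxtimes\B_0\subset\A_1\boxtimes\B_0\subset\cdots\subset\A\boxtimes\B_0\subset\A\boxtimes\B_1\subset\cdots\subset\A\boxtimes\B$ has cyclic successive quotients. For a $G$-extension $\A$ of $\B$ with $G$ and $\B$ both solvable, I would pick a subnormal series $1=G_0\trianglelefteq\cdots\trianglelefteq G_k=G$ with cyclic quotients and decompose the grading as $\A(i):=\bigoplus_{g\in G_i}\A_g$, producing a chain of cyclic extensions from $\B$ up to $\A$, then prepend a cyclic nilpotent filtration of $\B$. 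Equivariantization by a solvable $G$ reduces to this via Example~\ref{equiv dual grad}, which identifies $\A^G$ as Morita dual to the crossed product $\A\rtimes G$, itself a $G$-extension of $\A$. For closure under fusion subcategories and components of quotient categories I would induct on the cyclic nilpotent length of a Morita representative, using that a fusion subcategory of (respectively, surjective image of) a cyclic $G$-extension either lies in the trivial component or inherits a cyclic grading of its own.

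For (ii), if $G$ is solvable with a composition series $1=G_0\trianglelefteq\cdots\trianglelefteq G_n=G$ of cyclic quotients, then $\Vec=\Vec_{G_0}^{\omega|_{G_0}}\subset\cdots\subset\Vec_G^\omega$ is a cyclic nilpotent tower of fusion subcategories, so $\Vec_G^\omega$ is solvable; by Example~\ref{VecG dual}, $\Rep(G)$ is Morita equivalent to $\Vec_G$, hence also solvable. For the converses, assuming $\Vec_G^\omega$ or $\Rep(G)$ is solvable, I would induct on $|G|$, using (iv) to produce a nontrivial invertible, extracting from it a nontrivial normal subgroup $N\trianglelefteq G$, and applying (i) to the subcategory and quotient to conclude that $N$ and $G/N$ are solvable, hence so is $G$. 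For (iii), let $\A$ be braided nilpotent with filtration $\Vec=\A_0\subset\cdots\subset\A_n=\A$, each step a $G_i$-extension. The braiding restricts to every $\A_i$, and the key structural input, which I would invoke from the theory of braided fusion categories, is that the universal grading group of a faithfully graded braided fusion category is abelian; consequently each $G_i$ is abelian, and refining each $G_i$-extension by a composition series of $G_i$ produces a cyclic nilpotent filtration of $\A$, so $\A$ is solvable.

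For (iv), suppose for contradiction that $\A\ne\Vec$ is solvable with $\A_{pt}=\Vec$. The strategy is to use the Drinfeld center as the principal Morita invariant (see the criterion discussed in the introduction of the paper): $\Z(\A)$ is braided, and one first shows that $\Z(\A)$ is itself solvable by virtue of $\A$ being solvable. The main obstacle, and the technical heart of the proof, is to establish that any braided solvable fusion category contains a nontrivial Tannakian subcategory $\Rep(H)$ with $H$ nontrivial and solvable; this is itself proved by induction using the M\"uger center and the structure theory of braided categories developed in Section~5. Granting this, composing $\Rep(H)\hookrightarrow\Z(\A)$ with the forgetful functor to $\A$ and using the equivariantization presentation $\A=(\A_H)^H$ of $\A$ over this Tannakian subcategory exhibits the nontrivial characters of $H$ as invertible objects of $\A$, contradicting $\A_{pt}=\Vec$ and completing the argument.
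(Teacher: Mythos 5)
The paper itself gives no proof of this proposition --- it simply cites \cite[Proposition 4.4]{ENO2} --- so your proposal has to be judged on its own correctness. Parts (iii), the forward direction of (ii), and the Morita/tensor/extension/equivariantization portions of (i) are essentially right (though for the extension step you must first invoke Lemma~\ref{extension Morita} to replace $\A$ by a $G$-extension of a genuinely cyclically nilpotent representative of $\B$: as written you ``prepend a cyclic nilpotent filtration of $\B$'', which $\B$ itself need not possess). The serious problem is (iv). Your pivotal claim there --- that every braided solvable fusion category contains a nontrivial Tannakian subcategory --- is false: $\sVec$, the pointed modular categories $\C(\mathbb{Z}/p\mathbb{Z},q)$ with $q$ non-degenerate, and the (non-pointed) Ising category are all solvable and braided and contain no nontrivial Tannakian subcategory; and you do not prove the claim, you only ``grant'' it. Moreover, even when $\Z(\A)$ does contain a Tannakian $\Rep(H)$, Theorem~\ref{ZC --> ext} only yields that $\A$ is \emph{Morita equivalent} to an $H$-extension; the equivariantization presentation $\A\cong(\A_H)^H$ you invoke requires the composite $\Rep(H)\hookrightarrow\Z(\A)\to\A$ to be fully faithful, which fails exactly in the extension case (there the Tannakian objects are sent to multiples of $\be$), and ``contains a nontrivial invertible object'' is not a Morita-invariant property, so no contradiction with $\A_{pt}=\Vec$ follows. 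The intended (ENO2) proof of (iv) is far lighter: using the characterization of solvability stated right after this proposition (a chain $\Vec=\A_0,\dots,\A_n=\A$ in which each step is an extension or an equivariantization by a cyclic group of prime order), induct on $n$: if the last step is an equivariantization then $\Rep(\mathbb{Z}/p\mathbb{Z})\subset\A$ supplies nontrivial invertibles; if it is an extension then either $\A_{n-1}=\Vec$ and $\A$ is pointed, or an invertible object of $\A_{n-1}$ persists in $\A$.

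Two further gaps. In (i), your argument for closure under subcategories and quotient components does not engage the real difficulty: a fusion subcategory $\D\subset\A$ is \emph{not} a subcategory of the cyclically nilpotent Morita representative, and ``is a subcategory of'' does not transport across Morita equivalence, so inducting on the length of the representative's filtration says nothing about $\D$. One must instead argue through the extension/equivariantization chain (where describing fusion subcategories of an equivariantization $\B^{G}$ is genuinely nontrivial) or through the subcategory/quotient duality used in \cite{ENO2}. In (ii), the converse via (iv) works for $\Rep(G)$ (a nontrivial invertible object is a character, its kernel is normal, and $\Rep(N)$ is a quotient category via restriction), but it is vacuous for $\Vec_G^\omega$: there every simple object is already invertible, and an invertible $\delta_g$ only produces the generally non-normal subgroup $\langle g\rangle$. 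For that case one can instead observe that $\Rep(G)\subset\Z(\Vec_G^\omega)$, that $\Z(\Vec_G^\omega)$ is solvable by closure under tensor products, opposite categories and Morita equivalence (being dual to $\Vec_G^\omega\boxtimes(\Vec_G^\omega)^{\op}$ with respect to $\Vec_G^\omega$), and then apply subcategory closure together with the $\Rep(G)$ case.
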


\begin{lemma}
\label{extension Morita}
Let $G$ be a finite group, let $\A$ be a $G$-extension  of a fusion category $\A_0$, and let
$\B_0$ be a fusion category Morita equivalent to $\A_0$. There exists
a $G$-extension $\B$ of $\B_0$ which is Morita equivalent to $\A$. 
\end{lemma}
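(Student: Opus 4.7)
The plan is to realize the Morita equivalence $\A_0 \sim \B_0$ by an indecomposable left $\A_0$-module category $\M_0$ with $\B_0 \cong ((\A_0)^*_{\M_0})^{\op}$, induce it up to $\A$, and take the dual. I would form the relative tensor product $\M := \A \boxtimes_{\A_0} \M_0$ and regard it as a left $\A$-module category via left multiplication on $\A$. Since $\A$ is a $G$-extension, each $\A_g$ is invertible as an $\A_0$-bimodule, and induction along $\A_0 \hookrightarrow \A$ carries indecomposable $\A_0$-module categories to indecomposable $\A$-module categories; hence $\M$ is indecomposable, and the fusion category $\B := (\A^*_\M)^{\op}$ is Morita equivalent to $\A$ by definition.

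Next I would exhibit a $G$-grading on $\B$. The decomposition $\A = \bigoplus_{g\in G} \A_g$ yields an additive decomposition $\M = \bigoplus_{g\in G} \M_g$ with $\M_g := \A_g \boxtimes_{\A_0} \M_0$, compatible with the action in the sense that $\A_h \otimes \M_g \subset \M_{hg}$. I would call an $\A$-module endofunctor $F$ of $\M$ homogeneous of grade $g$ if $F(\M_h) \subset \M_{hg}$ for every $h \in G$, and then observe that every module functor decomposes canonically as $F = \bigoplus_g F^{(g)}$, where $F^{(g)}|_{\M_h}$ is the component of $F|_{\M_h}$ landing in $\M_{hg}$. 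A direct computation using $F(X \otimes M) \cong X \otimes F(M)$ together with $\A_k \otimes \M_{hg} \subset \M_{khg}$ shows that each $F^{(g)}$ is itself an $\A$-module functor. Since composition of grade-$g$ and grade-$g'$ functors has grade $g'g$, this defines a grading on $\A^*_\M$ with respect to the opposite tensor product, equivalently a genuine $G$-grading on $\B$.

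For the trivial component, restriction to $\M_e = \M_0$ sends an $\A$-module endofunctor preserving every $\M_h$ to an $\A_0$-module endofunctor of $\M_0$; the inverse $F_0 \mapsto \id_\A \boxtimes_{\A_0} F_0$ comes from the induction-restriction adjunction and produces a tensor equivalence $\B_e \cong ((\A_0)^*_{\M_0})^{\op} = \B_0$. Faithfulness of the grading then follows from Frobenius-Perron invariance under duality together with Theorem~\ref{thm Lagrange}: one has $\FPdim(\B) = \FPdim(\A) = |G|\FPdim(\A_0) = |G|\FPdim(\B_0) = |G|\FPdim(\B_e)$, while the support of any grading on a fusion category is automatically a subgroup $H \subset G$ over which the category is a faithful extension of $\B_e$, giving $\FPdim(\B) = |H|\FPdim(\B_e)$; the two identities force $H = G$.

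The hardest part will be the careful check that the summands $F^{(g)}$ really are $\A$-module functors, and the related verification that $\M$ is indecomposable over $\A$; both rely on the invertibility of the bimodules $\A_g$ over $\A_0$ (in particular on the fact that $\A_h \otimes_{\A_0} \M_g \xrightarrow{\sim} \M_{hg}$ via the action) coming from the $G$-extension structure. Once this technical bookkeeping is in place, the identification $\B_e \cong \B_0$ and the dimension count are formal consequences.
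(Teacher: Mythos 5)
Your argument is correct, and it arrives at the same category $\B$ as the paper does, but by a different (dual) route that requires noticeably more verification. The paper realizes the Morita equivalence $\A_0\sim\B_0$ by a separable algebra $A$ in $\A_0$ with $\B_0\simeq\Bimod_{\A_0}(A)$, and simply takes $\B:=\Bimod_\A(A)$; since $A$ sits in the trivial component, an $A$-bimodule inherits a degree from the $G$-grading of its underlying object of $\A$, so the grading on $\B$, the identification $\B_e\simeq\B_0$, and the Morita equivalence with $\A$ are all immediate. Your version works instead with the module category $\M_0$ and the induced category $\M=\A\boxtimes_{\A_0}\M_0$, so that $\B=(\A^*_\M)^\op$; under the dictionary $\M_0=\Mod_A(\A_0)$, $\M=\Mod_A(\A)$, this is literally the same $\B$, but in the functor language you must separately prove that $\M$ is indecomposable, that every module endofunctor splits into homogeneous components $F^{(g)}$ which are again module functors, that restriction to $\M_e$ identifies $\B_e$ with $\B_0$, and that the grading is faithful (your dimension count for the last point is fine, and in the algebra picture faithfulness is equally visible since $\A_g\ot A\ot\A_g$ contains nonzero $A$-bimodules of degree $g$). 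All of these steps are standard and your justifications — invertibility of the $\A_0$-bimodules $\A_g$, the induction--restriction adjunction, Theorem~\ref{thm Lagrange} — are the right ones, so there is no gap; the trade-off is that your approach makes the $2$-categorical mechanism explicit (it is essentially the one underlying the classification of extensions in \cite{ENO3}), while the paper's internal-algebra formulation compresses the entire verification into the observation that $A$ lives in degree $e$.
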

\begin{proof}
The proof is taken from \cite[Lemma 3.4]{ENO2}. Let $A$ be an algebra in $\A_0$
such that $\B_0$ is equivalent to the category of $A$-bimodules in  $\A_0$.  Let $\B$ be the category
of $A$-bimodules in  $\A$ (we can view $A$ as an algebra in $\A$ since $\A_0 \subset \A$).
Then  $\B$ inherits the $G$-grading, thanks to $A$  being in the trivial component of the $G$-graded
fusion category $\A$.  By construction, $\B$ is categorically Morita equivalent to $\A$. 
\end{proof}

\begin{proposition}
\label{WGT closure}
The class of weakly group-theoretical fusion closed is closed under extensions and equivariantizations.
\end{proposition}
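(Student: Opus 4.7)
The plan is to handle the two assertions separately, with the equivariantization case reducing to the extension case via the crossed product/equivariantization duality of Example~\ref{equiv dual grad}.

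For the extension case, let $\A$ be a $G$-extension of a weakly group-theoretical fusion category $\A_0$. By assumption there exists a nilpotent fusion category $\B_0$ categorically Morita equivalent to $\A_0$. Applying Lemma~\ref{extension Morita} produces a $G$-extension $\B$ of $\B_0$ that is categorically Morita equivalent to $\A$. It remains to check that $\B$ is itself nilpotent, but this is automatic: given a nilpotent filtration $\Vec = \B_{0,0} \subset \B_{0,1} \subset \cdots \subset \B_{0,n} = \B_0$ for $\B_0$, the extended chain $\Vec = \B_{0,0} \subset \cdots \subset \B_{0,n} = \B_0 \subset \B$ witnesses the nilpotence of $\B$, with nilpotency class one higher. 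Hence $\A$ is Morita equivalent to the nilpotent category $\B$ and is weakly group-theoretical.

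For the equivariantization case, suppose a finite group $G$ acts on a weakly group-theoretical fusion category $\A_0$. Form the crossed product $\A_0 \rtimes G$ of Example~\ref{equiv dual grad}: this is by construction a faithful $G$-extension of $\A_0$, so by the extension case just proved it is weakly group-theoretical. On the other hand, Example~\ref{equiv dual grad} identifies $\A_0 \rtimes G$ with the dual $(\A_0^G)^*_{\A_0}$, so $\A_0^G$ and $\A_0 \rtimes G$ lie in a common categorical Morita equivalence class (invoking transitivity of Morita equivalence, i.e.\ Müger's theorem recorded just after the definition of Morita equivalence). Consequently $\A_0^G$ is Morita equivalent to a nilpotent fusion category and is weakly group-theoretical.

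The main point that requires attention is the splicing step in the extension argument, namely that the $G$-extension $\B$ produced by Lemma~\ref{extension Morita} really does have $\B_0$ as its trivial $G$-graded component, so that the two nilpotent chains concatenate to a nilpotent chain for $\B$. This is built into the construction in the proof of Lemma~\ref{extension Morita}, where $\B$ is realized as $A$-bimodules in $\A$ and $\B_0$ as $A$-bimodules in the trivial component $\A_0$. Beyond this bookkeeping, I do not foresee any substantive obstacle: the whole proposition is a formal consequence of Lemma~\ref{extension Morita} combined with the equivariantization/crossed product duality.
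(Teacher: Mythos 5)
Your proposal is correct and follows essentially the same route as the paper: reduce equivariantizations to extensions via the crossed product duality of Example~\ref{equiv dual grad}, and handle extensions by applying Lemma~\ref{extension Morita} to a nilpotent category Morita equivalent to the trivial component, noting that a $G$-extension of a nilpotent category is nilpotent. The only difference is that you spell out the splicing of nilpotent chains and the Morita-invariance bookkeeping that the paper leaves implicit.
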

\begin{proof}
In view of Example~\ref{equiv dual grad} it is enough to prove the assertion about extensions.
Let $\A$ be a $G$-extension of a weakly group-theoretical fusion category $\A_0$.  Let $\B_0$
be a nilpotent fusion category Morita equivalent to $\A_0$.  Then by Lemma~\ref{extension Morita}
there exists a nilpotent category $\B$  Morita equivalent to $\A$, i.e., $\A$ is weakly group-theoretical.
\end{proof}

\begin{remark}
Proposition~\ref{WGT closure} and the fact that the class of weakly group-theoretical categories
is closed under taking subcategories and  quotient categories were proved in 
\cite[Proposition 4.1]{ENO2}.
\end{remark}

\begin{remark}
Since the Frobenius-Perron dimension of a fusion category is invariant under
categorical Morita equivalence, cf. \eqref{FPdim duality},  we have  $\FPdim(\A) \in \mathbb{Z}$
for every weakly group-theoretical fusion category $\A$.
\end{remark}

\begin{remark}
Let $\A$ be a weakly group-theoreticsl fusion category.  The following Frobenius property
of $\A$ was established in \cite[Theorem 1.5]{ENO2}:  for every simple object $X$ of $\A$ the ratio
$\FPdim(\A)/ \FPdim(X)$ is an algebraic integer.

\end{remark}

\begin{remark}
Because of recursive nature of the definition of a nilpotent fusion category  it is usually not
possible  to describe weakly group-theoretical  categories as explicitly as group-theoretical ones,
cf.\ Example~\ref{gt example}. 
On the other hand,  there is a classification of module categories over a given graded
fusion category in terms of module categories over its trivial component \cite{Ga, MM}.
So, in principle, weakly group-theoretical fusion categories can be described in terms
of finite groups and their cohomology. 
\end{remark}

\begin{remark}
The approach to classification  of fusion categories used in this paper  consists of showing
that categories of a given Frobenius-Perron dimension  are weakly group-theoretical. 
We {\em do not} attempt to classify weakly group-theoretical categories of an
arbitrary finite dimension (indeed, this would include, as a special case,  classification
of finite groups).
\end{remark}

%%%%%%%%%%%%%%%%%%%%%%%%%%%%%%%%%%%%%%%%%%%%%%
%%%%%%%%%%%%%%%%%%%%%%%%%%%%%%%%%%%%%%%%%%%%%%
%%%%%%%%%%%%%%%%%%%%%%%%%%%%%%%%%%%%%%%%%%%%%%
\section{Braided fusion categories}

The notion of a braiding was introduced by A.~Joyal and  R.~Street in \cite{JS}.
%%%%%%%%%%%%%%%%%%%%%%%%%%%%%%%%%%%%%%%%%%%%%%%%%%%%%%%%%
\subsection{Definitions and examples}
\label{def braid sect}

\begin{definition}\label{brcat}
A {\em braiding} on a fusion category $\C$ is 
a natural isomorphism 
\[
c_{X,Y}: X \ot Y \xrightarrow{\sim}  Y \ot X,\qquad X,\,Y\in \C, 
\]
called a {\em commutativity constraint}, such that the following hexagon diagrams 
\begin{equation}
\label{hexagon}
\xymatrix{
& X \ot (Y \ot Z) \ar[rr]^{c_{X, Y\ot Z}} & &
(Y \ot Z) \ot X  \ar[dr]^{a_{Y,Z,X}} & \\
(X\ot Y)\ot Z \ar[ur]^{a_{X,Y,Z}}
\ar[dr]_{c_{X,Y}\ot \id_Z} & &  &  &Y \ot (Z\ot X) \\
& (Y\ot X)\ot Z \ar[rr]_{\alpha_{Y, X, Z}} & &  Y \ot (X\ot Z)
\ar[ur]_{\id_Y  \ot c_{X, Z}} & 
}
\end{equation}
and
\begin{equation}
\label{hexagon'}
\xymatrix{
& (X \ot Y) \ot Z \ar[rr]^{c_{X\ot Y, Z}} & &
Z \ot (X \ot Y)  \ar[dr]^{a^{-1}_{Z,X,Y}} & \\
X\ot (Y\ot Z) \ar[ur]^{a^{-1}_{X,Y,Z}}
\ar[dr]_{\id_X \ot c_{Y,Z}} & &  &  & (Z\ot X) \ot Y\\
& X \ot (Z\ot Y) \ar[rr]_{a^{-1}_{X, Z, Y}} & &  (X\ot Z) \ot Y
\ar[ur]_{c_{X, Z}\ot \id_Y} & 
}
\end{equation}
are commutative  for all objects $X,\,Y,\,Z$ in ${\C}$. 
\end{definition}
%The above axioms are called Hexagon axioms.

For a braided fusion category $\C$ with the braiding $c_{X,Y}: X\ot Y \xrightarrow{\sim} Y\ot X$
let $\C^\rev$ denote the {\em reverse} category that coincides with $\C$ as a fusion category
and has braiding $\tilde{c}_{X,Y}:= c_{Y,X}^{-1}$.

\begin{definition}
\label{Braided tensor functor}
Let $\C^1$ and $\C^2$ be braided tensor categories whose braidings are denoted 
$c^1$ and $c^2$,
respectively. A tensor functor $(F,\, J)$ from $\C^1$ to $\C^2$ is called {\em braided}
if the following diagram commutes:
\begin{equation}
\label{tensor hexagon}
\xymatrix{
F(X) \ot F(Y) \ar[rrr]^{c^2_{F(X),F(Y)}} \ar[d]_{J_{X,Y}}  &&& F(Y)  \ot F(X) 
\ar[d]^{J_{Y,X}}  \\
F(X \ot Y) \ar[rrr]^{F(c^1_{X,Y})} &&& F(Y \ot X)
}
\end{equation}
for all object $X,Y$ in $\C^1$.
\end{definition}

Note that a tensor functor is a functor with an additional {\em structure}. 
For a tensor functor to be braided is a {\em property}.

Let $G$ be an Abelian group.  By a {\em quadratic form} on $G$
(with values in $\kk^\times$) we will mean a map $q: G \to \kk^\times$
such that $q(g)=q(g^{-1})$ and  the symmetric function
\begin{equation}
\label{bq}
b(g,\,h) := \frac{q(gh)}{q(g)q(h)}
\end{equation}
is bimultiplivative, i.e.,  $b(g_1g_2,\, h) =b(g_1,\, h) b(g_2,\, h)$
for all $g,\,g_1,\, g_2,\, h\in G$.
We will say that $q$ is {\em non-degenerate} if the associated bicharacter
$b$ is non-degenerate.

The simplest way to construct a quadratic form on $G$ is to start with a
bicharacter $B : G \times G \to \kk^\times$ and set
\begin{equation}
\label{easy q}
q(g) := B(g,g),\, g\in G.  
\end{equation}
%Not every quadratic form
%can be represented like this. Counterexample: $G =\mathbb{Z}/2\mathbb{Z}$
%with $q(n) = i^{n^2}$.

\begin{definition}
A {\em pre-metric group} is a pair $(G,\, q)$ where $G$ is a finite Abelian
group and $q:G\to \kk^\times$ is a quadratic form. A {\em metric group}
is a pre-metric group such that $q$ is non-degenerate.
\end{definition}

The relation between braided fusion categories and pre-metric groups is as follows.
Let $\C$ be a pointed braided fusion category. Then $\C =\Vec_G^\omega$
for some {\em Abelian} group $G$.  
Define a map $q: G \to \kk^\times$ by
\begin{equation*}
q(g) = c_{\delta_g,\, \delta_g}  \in \Aut (\delta_g \ot \delta_g) =\kk^\times.
\end{equation*}
It is easy to see that $q:G \to \kk^\times$ is a quadratic form. We thus have
a functor:
\begin{equation*}
F: (\mbox{pointed braided fusion categories}) \to (\mbox{pre-metric groups}). 
\end{equation*}
It was shown by Joyal and Street in \cite{JS}  that this functor is an equivalence.
Under this equivalence, braided tensor functors correspond to orthogonal
(i.e., quadratic form preserving) homomorphisms.

We will denote by $\C(G,\, q)$ the braided fusion category associated 
to the pre-metric group $(G,\, q)$. 

\begin{remark}
When $q$ is determined by a bicharacter $B$ as in \eqref{easy q} we  have
$\C(G,\, q) =\Vec_G$ as a fusion category with  the braiding given by
\begin{equation*}
c_{\delta_g,\, \delta_h} = B(g,\, h) \id_{\delta_{gh}}. 
\end{equation*} 
\end{remark}

\begin{definition}
A {\em quasi-triangular structure} on a Hopf algebra $H$ is an
invertible element $R \in H \otimes H$ such that
for all $x \in H$, 
\begin{equation}
R \Delta (x) = \Delta^{op}(x) R \label{comul},
\end{equation}
where $ \Delta^{op}$ denotes the opposite comultiplication,
and the following relations are satisfied (in $H  \otimes H \otimes H$): 
\begin{eqnarray}
(\Delta \otimes \id)(R) &=& R^{13} R^{23} \label{hexone} \\
(\id \otimes \Delta)(R) &=& R^{13} R^{12} \label{hextwo}
\end{eqnarray}
A Hopf algebra $H$ equipped with a quasi-triangular structure
is said to be a {\em quasi-triangular Hopf algebra}.
\end{definition}

Here 
for $R =\sum_i\, r_i\ot r_i^\prime$ we write $R^{12} =\sum_i \, 
r_i\ot r_i^\prime \ot 1 \in H \otimes H \otimes H$, etc. 

Given a semisimple quasi-triangular Hopf algebra $H$ one turns $\Rep(H)$ into
a braided fusion category by setting
\begin{equation}
\label{braiding from QT}
c_{V\ot W}  : V \ot W \xrightarrow{\sim} W\ot V: v\ot w \mapsto R^{21}(w\ot v),
\end{equation}
for all representations $V,\, W$ of $H$ and $v\in V,\, w\in W$.

Note that the axiom \eqref{comul} means that the map \eqref{braiding from QT}
is a morphism in $\Rep(H)$ and axioms \eqref{hexone} and \eqref{hextwo}
are equivalent to  $c$ satisfying the hexagon axioms \eqref{hexagon} and \eqref{hexagon'}. 
Conversely,
any braiding on $\Rep(H)$ is determined by a quasi-triangular structure.

%%%%%%%%%%%%%%%%%%%%%%%%%%%%%%%%%%%%%%%%%%%%%%%%%%%%%%%%%
\subsection{Symmetric and Tannakian subcategories}

A braided fusion category $\C$ is called {\em symmetric} if  $c_{Y,X}c_{X,Y} =\id_{X\ot Y}$ for all objects $X,Y \in \C$; 
in this case the braiding $c$ is also called {\em symmetric}. 

\begin{example}
The category $\Rep(G)$ of representations of a finite group $G$ equipped with its standard
symmetric braiding $c_{X,Y}(x\otimes y):=y\otimes x$
is an example of a symmetric fusion category. Deligne \cite{De}
proved that any symmetric fusion category is equivalent to a ``super" generalization of $\Rep(G)$. 
Namely, let $G$ be a finite group and let $z\in G$
be a central element such that $z^2=1$. Then the fusion category $\Rep(G)$ has a braiding
$c'_{X,Y}$ defined as follows:
\begin{equation*}
c'_{X,Y}(x\otimes y)=(-1)^{mn}y\otimes x \,\mbox{ if }\,
x\in X, \; y\in Y, \; zx=(-1)^{m}x,\; zy=(-1)^{n}y.
\end{equation*}
Let $\Rep(G,\,z)$ denote the fusion category $\Rep(G)$ equipped with the above braiding.
Equivalently, $\Rep(G,\,z)$ can be described as a full subcategory of the category of super-representations of $G$; 
namely, $\Rep(G,\,z)$ consists of those super-representations $V$ on 
which $z$ acts by the {\em parity automorphism\,} 
(i.e., $zv=v$ if $v\in V$ is even and $zv=-v$ if $v\in V$ is odd). 

For example, let $G=\mathbb{Z}/2\mathbb{Z}$ and $z$ be the nontrivial element of $G$. Then
$\Rep(G,\,z)$ is the category $\sVec$ of super-vector spaces.
\end{example}

A symmetric fusion category $\C$ is said to be {\em Tannakian} if 
there exists a finite group $G$ such that $\C$ is equivalent to $\Rep(G)$ as a braided fusion
category. It is proved in \cite{De} that $\C$ is Tannakian if and only if it 
admits a {\em braided fiber functor}, i.e., a  braided tensor functor $\C\to\Vec$. 

The canonical fiber functor $\Rep(G)\to \Vec$ is nothing but the functor forgetting
the $G$-module structure.

Note that for a symmetric category $\C$ its dimension $\FPdim(\C)$ is always an integer (more precisely, $\FPdim(\Rep(G,\, z)) =|G|$).
In particular, if $\FPdim(\C)$ is {\em odd} then $\C$ is automatically Tannakian.

\begin{remark}
\label{non-triv tannakian}
Let $\C=\Rep(G,\, z)$ be a symmetric category. Then $\Rep(G/\la z \ra)$ is a Tannakian subcategory of  $\C$.
In particular, a symmetric fusion  category $\C \not\cong \sVec$ contains a non-trivial Tannakian subcategory.
\end{remark}

\begin{example}
The pointed braided fusion category $\C(G,\, q)$ associated to the pre-metric group
$(G,\, q)$  is symmetric if and only if $q$ is a homomorphism.   The category $\C(G,\, q)$  is Tannakian if and
only if $q=1$.
\end{example}

%%%%%%%%%%%%%%%%%%%%%%%%%%%%%%%%%%%%%%%%%%%%%%%%%%%%%%%%%
\subsection{The Drinfeld center construction}
\label{Center section}

We now give a construction which assigns to every fusion category $\A$ a
braided fusion category $\Z(\A)$, called the {\em center} of $\A$. 

Explicitly, the objects of $\Z(\A)$ are pairs $(X,\, \gamma)$, where $X$ is an object of $\A$ and
\begin{equation}
\label{gamma}
\gamma = \{ \gamma_{V} : V\ot  X \xrightarrow{\sim}  X \ot V \}_{ V \in \A}
\end{equation}
is a natural family of isomorphisms, called {\em half-braidings}, 
making the following diagram commutative:
\begin{equation}
\label{central object}
\xymatrix{
& V \ot (X \ot U) \ar[rr]^{a_{V,X,U}^{-1}} & &
(V \ot X) \ot U  \ar[dr]^{\gamma_{V}\ot \id_U} & \\
V\ot (U\ot X) \ar[ur]^{\id_V\ot \gamma_{U}}
\ar[dr]_{a_{V,U,X}^{-1}} & &  &  &(X \ot V) \ot U \\
& (V\ot U)\ot X \ar[rr]_{\gamma_{V\ot U}} & & X\ot (V\ot U)
\ar[ur]_{a_{X,V,U}^{-1}} & }.
\end{equation}

The center has a canonical braiding given by
\begin{equation}
\label{braiding}
c_{(X, \gamma),\,(X',\gamma')} = \gamma_{X'} :
 (X,\, \gamma) \otimes (X',\, \gamma') \xrightarrow{\sim}
(X',\, \gamma') \otimes (X,\, \gamma).
\end{equation}
Furthermore, there is an obvious forgetful tensor functor:
\begin{equation}
\label{ZC to ZDC} 
F: \Z(\A) \mapsto \A: (X,\,\gamma) \mapsto X.
\end{equation}

We have
\begin{equation}
\label{dim center}
\FPdim(\Z(\A)) =\FPdim(\A)^2 \quad \mbox{and} \quad \dim(\Z(\A)) =\dim(\A)^2, 
\end{equation}
where the  Frobenius-Perron dimension  $\FPdim(\A)$ and categorical dimension $\dim(\A)$ 
were introduced in \eqref{FPdim def} and \eqref{dim def}.

Let $\C$ be a braided fusion category with braiding $c_{X,Y}: X\ot Y \xrightarrow{\sim} Y \ot X$.
There are natural braided embeddings $\C,\, \C^\rev \hookrightarrow \Z(\C)$ given by 
\[
X \mapsto ( X,\,  c_{-, X}) \quad \mbox{and} \quad   X \mapsto ( X,\,  \tilde{c}_{-, X}).
\]
They combine into a braided tensor functor
\begin{equation}
\label{G:CC-->ZC} 
G:\C \boxtimes \C^\rev\to \Z (\C).
\end{equation}
We say that $\C$ is {\em factorizable}  if the functor \eqref{G:CC-->ZC} is an equivalence.

\begin{example}
\label{hyperbolic form}
Let $A$ be a finite  Abelian group. There is canonical quadratic form 
\begin{equation}
q: A\oplus A^* \to \kk^\times : (a,\, \phi) \mapsto \phi(a),\qquad a\in A,\, \phi\in A^*.
\end{equation}
We have $\Z(\Vec_A)\cong \C(A\oplus A^*,\, q)$.
\end{example}

\begin{example}
\label{ZVecG}
More generally, let $G$ be a finite group. The category $\Z(\Vec_G)$ is equivalent
to the category  of $G$-equivariant vector bundles on $G$, cf.\ Example~\ref{equiv examples}(iii). 
Here $G$ acts on itself by conjugation. Explicitly, a $G$-equivariant vector bundle is
a graded vector space $V =\oplus_{g\in G}\, V_g$ along with a collection
of isomorphisms $\phi_{x,g} : V_g \to V_{xgx^{-1}}$ satisfying
the compatibility condition 
\[
\phi_{x,\, ygy^{-1}} \phi_{y ,g} = \phi_{xy, g},
\]
for all $x,\, y,\,g\in G$.   Note that $\Z(\Vec_G)$ contains a subcategory $\Rep(G)$
as the bundles supported on the identity element of $G$.
\end{example}

\begin{example}
Let $H$ be a semisimple Hopf algebra. Then 
\[
\Z(\Rep(H)) \cong \Rep(D(H)),
\] 
where  $D(H)$ is the Drinfeld double of $H$ \cite{Ka}. 
\end{example}

%%%%%%%%%%%%%%%%%%%%%%%%%%%%%%%%%%%%%%%%%%%%%%%%%%%%%%%%%
\subsection{Ribbon fusion categories and traces}

\begin{definition}
A {\em pre-modular} fusion category is a braided fusion category
equipped with a spherical structure. 
\end{definition}

Below we give an equivalent description of pre-modular categories.

\begin{definition}
\label{def ribbon} 
A {\em twist}  (or a {\em balanced transformation}) on a braided
fusion category $\C$ is $\theta\in \Aut(\id_\C)$
such that 
\begin{equation}
\label{twist}
\theta_{X\ot Y}  = (\theta_X \ot \theta_Y) c_{Y,X} c_{X,Y} 
%\qquad \mbox{ and } \qquad (\theta_X)^* =\theta_{X^*}
\end{equation}
for all $X,\,Y\in \C$. A twist is called {\em a ribbon structure} if $(\theta_X)^* =\theta_{X^*}$.
A   fusion category with a ribbon structure is called a {\em ribbon} category.
\end{definition} 

\begin{remark}
The notion of a ribbon structure can be understood as a 
generalization of the notion of quadratic form.  
Indeed, let $G$ be a finite Abelian group and $b: G \times G \to \kk^\times$
be a bilinear form. As explained in Section~\ref{def braid sect}, it defines a braiding on $\C=\Vec_G$.
The corresponding quadratic form defines a ribbon structure on $\C$:
$$
\theta_{\delta_x}= b(x,\, x) \id_{\delta_x},\qquad x\in G. 
$$
\end{remark}

%\begin{remark}
%Let $\theta_1,\, \theta_2$ be two ribbon structures on $\C$. Then $\psi:= \theta_1 \theta_2^{-1}$
%is a tensor automorphism of $\id_\C$ such that $\psi^2 =\id$. 
%\end{remark}

Let us define a natural transformation $u_X: X \to X^{**}$ as the composition
\begin{equation}
\label{u}
X \xrightarrow{\id_X \ot \coev_{X^*}} X \ot X^* \ot X^{**} \xrightarrow{c_{X,X^*} \ot \id_{X^{**}}}
X^* \ot X \ot X^{**}  \xrightarrow{\ev_{X}\ot \id_{X^{**}}} X^{**}.
\end{equation}

Then $u_X$ is an isomorphism satisfying  the following balancing property:
\begin{equation}
\label{balance}
u_X \ot u_Y = u_{X\ot Y}\, c_{Y,X}c_{X,Y}
\end{equation}
or all $X,\,Y\in \C$.

Clearly,  any natural isomorphism $\psi_X:X\simeq X^{**}$ in
a braided fusion category $\C$ can be written as 
\begin{equation} 
\label{deltatheta}
\psi_X= u_X \theta_X.
\end{equation}
for some $\theta\in\Aut (\id_\C )$ It follows from \eqref{balance} that
$\psi$ is a tensor isomorphism (i.e., a pivotal structure on $\C$) if and only if $\theta$ is a twist.

The above pivotal structure is  spherical if an only if the corresponding twist $\theta=\psi u^{-1}$
is a ribbon structure. Thus, a ribbon fusion category is the same thing as a pre-modular category.

%The trace \eqref{old friend} is related to left and right traces $\Tr^L$ and $\Tr^R$ from \eqref{TrL}
%and \eqref{TrR} by  $\Tr(f) =\Tr^L(\psi_X f) =\Tr^R(f \psi_X^{-1})$. 

%%%%%%%%%%%%%%%%%%%%%%%%%%%%%%%%%%%%%%%%%%%%%%%%%%%%%%%%%
\subsection{$S$-matrix of a pre-modular category}
\label{Smat}

Let $\C$ be a   pre-modular  category with a spherical structure  $\psi$.
Let $\O(\C)$ denote the set of (isomorphism classes of) simple objects of $\C$.
For all $X,\, Y,\,Z \in \O(\C)$ let  $N_{XY}^Z$ denote the multiplicity of 
$Z$ in $X\ot Y$.

We will idetify the correspoonding twist $\theta \in \Aut(\id_\C)$ with
a collection of scalars $\theta_X\in \kk^\times,\,X\in \O(\C)$.
Let $\Tr$ and $d$ denote the trace and dimension corresponding to $\psi$.

\begin{definition}
\label{Smatr}
Let $\C$ be a pre-modular category.
The {\em $S$-matrix}  of $\C$ is  defined by 
\begin{equation}
\label{def S matrix}
S:=\left(s_{XY}\right)_{X,Y\in \O(\C)},
\quad\mbox{where } \quad
s_{XY} = \Tr(c_{Y,X}c_{X,Y}).
\end{equation}
\end{definition}

\begin{remark}
\label{Smatrem}
The $S$-matrix of $\C$ is a symmetric $n$-by-$n$ matrix where
$n=|\O(\C)|$ is the number of simple objects of $\C$.
It satisfies $s_{X^* Y^*} = s_{XY}$ for all $X,Y\in \O(\C)$.
We also  have $s_{X \be} = s_{\be X} =d_X$.
\end{remark}

\begin{definition} (\cite{T}) A pre-modular category $\C$ is said to be {\em modular} if its
$S$-matrix is non-degenerate. 
\end{definition}

\begin{example}
Let $G$ be a finite Abelian group. Let $q: G \to \kk^\times$ be a quadratic form
on $G$ and let $b: G \times G \to \kk^\times$ be the associated symmetric bilinear form.  
The $S$-matrix of the corresponding
pointed premodular category $\C(G,\,q)$ (see Section~\ref{def braid sect}) is
$\{ b(g,\,h)\}_{g,h\in G}$. Thus, $\C(G,q)$ is modular if and only if
$q$ is non-degenerate.
\end{example}

%Clearly, a pre-modular category is modular if and only  if it is non-degenerate in the 
%sense of Definition~\ref{defnondeg}.

Let  $\C$ be a pre-modular category.

\begin{proposition}
We have 
\begin{equation}
\label{sij}
s_{XY} = \theta_X^{-1}\theta_Y^{-1} \sum_{Z\in \O(\C)}
\, N_{XY}^Z \theta_Z d_Z.
\end{equation}
for all $X,Y\in \O(\C)$.
\end{proposition}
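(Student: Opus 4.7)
The plan is to derive the formula directly from the balancing identity \eqref{twist} and the naturality of the twist, together with additivity of the trace on direct summands.

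First, rearrange the balancing axiom. Since $\theta_X$ and $\theta_Y$ are scalars (as $X,Y\in\O(\C)$ are simple), they are central, so \eqref{twist} gives
\[
c_{Y,X}c_{X,Y} \;=\; (\theta_X\ot \theta_Y)^{-1}\theta_{X\ot Y} \;=\; \theta_X^{-1}\theta_Y^{-1}\,\theta_{X\ot Y}
\]
as endomorphisms of $X\ot Y$. Applying $\Tr$ and pulling out the scalar factor,
\[
s_{XY} \;=\; \Tr(c_{Y,X}c_{X,Y}) \;=\; \theta_X^{-1}\theta_Y^{-1}\,\Tr(\theta_{X\ot Y}).
\]

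Next, decompose $X\ot Y = \bigoplus_{Z\in\O(\C)} N_{XY}^Z\,Z$. By naturality of $\theta\in\Aut(\id_\C)$, the automorphism $\theta_{X\ot Y}$ preserves this decomposition and restricts to $\theta_Z\cdot\id_Z$ on each copy of $Z$. The trace \eqref{old friend} associated to the spherical structure $\psi$ is additive on direct sums (it respects $\coev$, $\ev$, and the evident block-diagonal form of $\psi_{X\ot Y}\circ\theta_{X\ot Y}$). Hence
\[
\Tr(\theta_{X\ot Y}) \;=\; \sum_{Z\in\O(\C)} N_{XY}^Z\, \theta_Z\, \Tr(\id_Z) \;=\; \sum_{Z\in\O(\C)} N_{XY}^Z\, \theta_Z\, d_Z,
\]
using \eqref{eqn:dim}. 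Substituting into the previous display yields \eqref{sij}.

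There is no real obstacle here; the only subtle point is justifying that $\Tr$ distributes over the decomposition $X\ot Y\cong\bigoplus_Z N_{XY}^Z Z$. This follows because the inclusions and projections of simple summands are morphisms in $\C$, so naturality of $\coev$, $\ev$, and $\psi$ lets one split the trace of a block-diagonal endomorphism into a sum of traces on each isotypic component; the scalar $\theta_Z$ then pulls out, leaving $\Tr(\id_Z)=d_Z$ for each of the $N_{XY}^Z$ copies.
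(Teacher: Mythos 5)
Your proof is correct and follows the same route as the paper: apply $\Tr$ to the balancing identity \eqref{twist}, use that $\theta_X,\theta_Y$ are scalars on simple objects, and evaluate $\Tr(\theta_{X\ot Y})$ via the decomposition of $X\ot Y$ and additivity of the trace. Your extra remarks justifying the additivity of $\Tr$ over the isotypic decomposition are a harmless elaboration of what the paper simply cites as "additivity of $\Tr$."
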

\begin{proof}
Apply $\Tr$ to both sides of formula \eqref{twist}. The right hand side 
is equal to
$\theta_X \theta_Y s_{XY}$ while the left hand side is equal to
\begin{eqnarray*}
\Tr(\theta_{X\ot Y})
&=&  \sum_{Z\in \O(\C)} \, N_{XY}^Z \Tr(\theta_Z \id_Z) \\
&=&  \sum_{Z\in \O(\C)}\, N_{XY}^Z \theta_Z d_Z,
\end{eqnarray*}
where we used additivity of $\Tr$.
\end{proof}

\begin{remark}
When $\C =\C(G,\,q)$
the relation \eqref{sij} between the twist and $S$-matrix of a premodular
category generalizes the relation \eqref{bq} between the quadrtaic form
and associated bilinear form.
\end{remark}

The elements of  $S$-matrix satisfy the following {\em Verlinde formula}
(see \cite[Theorem 3.1.12]{BK}, \cite[Lemma 2.4 (iii)]{Mu2} for a proof):
\begin{equation}
\label{ss= Nds}
s_{XY} s_{XZ} = d_X \sum_{W\in \O(\C)} \, N_{YZ}^W s_{XW},\quad X,Y,Z\in \O(\C).
\end{equation}

\begin{remark}
\label{K0 characters}
Formula \eqref{ss= Nds} can be interpreted as follows. For any fixed $X\in \O(\C)$
the map 
\begin{equation}
\label{homK0}
h_X: Y \mapsto \frac{s_{XY}}{d_X}, \quad Y\in \O(\C)
\end{equation}
gives rise to a ring homomorphism $K(\C)\to \kk$ which we will also denote $h_X$. 
That is, simple objects of $\C$  give rise to characters of the Grothendieck ring $K(\C)$.
We have $h_\be(Y) = d_Y$.
\end{remark}

The characters satisfy the following orthogonality relation:
\begin{equation}
\label{S-matr ort}
 \sum_{X\in \O(\C)}\,h_Y(X) h_Z(X^*) =0 \quad \mbox{for} \quad Y \not\cong Z.  
\end{equation}

The following result is due to Anderson, Moore, and Vafa \cite{AM, V}.

\begin{theorem} 
\label{AMV}
Let $\C$ be a premodular category.  Let $\theta$ be the twist of $\C$.
Then ${\theta_X}$ is a root of unity for all $X\in \C$.
\end{theorem}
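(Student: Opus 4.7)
My plan is to prove this via a Galois-theoretic argument, combining Ocneanu rigidity with Kronecker's theorem (an algebraic integer all of whose Galois conjugates have absolute value $1$ is a root of unity).

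First, by Ocneanu rigidity (Theorem~\ref{thm rigidity}) and the subsequent remark, the premodular structure on $\C$ is defined over an algebraic number field, so each $\theta_X$ is an algebraic number. I would then argue that $\theta_X$ is in fact an algebraic \emph{integer}. For this, one exploits formula \eqref{sij}: taking $Y = X^*$ (and using that in a ribbon category $\theta_{X^*} = \theta_X$ because the twist is a scalar and $(\theta_X)^* = \theta_{X^*}$) gives
\[
\theta_X^{2}\, s_{X,X^*} = \sum_{Z\in \O(\C)} N_{XX^*}^Z \, \theta_Z\, d_Z,
\]
and varying $Y$ over $\O(\C)$ yields a system of polynomial relations with integer multiplicities $N_{XY}^Z$. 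Combined with the fact that $d_Z$ and $\theta_Z$ are algebraic integers (dimensions in a fusion category are algebraic integers by \cite[Corollary 8.54]{ENO1}), one deduces that each $\theta_X$ satisfies a monic polynomial with algebraic integer coefficients.

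Second, for any Galois automorphism $\sigma \in \Gal(\bar{\mathbb{Q}}/\mathbb{Q})$ the Galois conjugate $\sigma(\C)$ is again a premodular category with the same fusion ring, and its twists are precisely $\sigma(\theta_X)$. The crucial step is to show $|\sigma(\theta_X)| = 1$ for every embedding into $\mathbb{C}$. In the original category, one can see $|\theta_X| = 1$ by applying the balancing property \eqref{twist} to $X \otimes X^*$, tracing, and using positivity of $|X|^2$; the point is that this argument, carried out inside $\sigma(\C)$ using the analogous categorical structure rather than the analytic inner product, yields the same conclusion for $\sigma(\theta_X)$.

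Finally, Kronecker's theorem immediately gives that $\theta_X$ is a root of unity. The main obstacle is the second step: establishing the uniform unit-circle bound $|\sigma(\theta_X)|=1$ across all Galois conjugates, since Galois conjugation need not preserve the positivity of categorical dimensions that drives the naive $\mathbb{C}$-valued argument. A common workaround (closer to Vafa's original derivation) is to avoid Kronecker's theorem entirely: apply the balancing relation to triple tensor products $X_1\otimes X_2 \otimes X_3$, combine with the hexagon axioms \eqref{hexagon}--\eqref{hexagon'} and the fact that the fusion multiplicities $N_{XY}^Z$ are bounded nonnegative integers, and extract an explicit $N$ depending only on the fusion ring such that $\theta_X^N = 1$. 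Either route works, but controlling the Galois action (or explicitly exhibiting $N$) is the heart of the proof.
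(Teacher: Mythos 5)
The paper itself gives no proof of Theorem~\ref{AMV}; it simply cites Anderson--Moore and Vafa, whose argument is precisely the ``workaround'' you only gesture at in your last sentences (the balancing-on-triple-products computation, as written up e.g.\ in Bakalov--Kirillov's proof of Vafa's theorem). So the relevant question is whether your own two routes close the argument, and neither does. In the Galois/Kronecker route there are two genuine gaps. First, the integrality step is circular as written: you deduce that $\theta_X$ satisfies a monic equation ``combined with the fact that $d_Z$ and $\theta_Z$ are algebraic integers'' --- but the integrality of the $\theta_Z$ is exactly what is being proved, and formula \eqref{sij} by itself only expresses $\theta_X^2 s_{XX^*}$ as an integral combination of the $\theta_Z d_Z$; it does not hand you a monic polynomial over the algebraic integers for a single $\theta_X$ without further work. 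Second, and more seriously, the unit-circle bound is not established even in $\C$ itself: applying \eqref{twist} to $X\ot X^*$ and tracing just reproduces \eqref{sij}, and positivity of $|X|^2$ gives no bound on $|\theta_X|$; for a premodular category that is not pseudo-unitary (or unitary) there is no elementary a priori reason that $|\theta_X|=1$, and the identical difficulty recurs in every Galois conjugate $\sigma(\C)$. You flag this as ``the main obstacle,'' but that obstacle \emph{is} the content of the theorem, so the Kronecker route as proposed assumes what it must prove.

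The second route is the correct one, but it is only named, not carried out. What is actually needed is: (i) the identity expressing the determinant of the monodromy $c_{Y,X}c_{X,Y}$ on each multiplicity space $\Hom(Z, X\ot Y)$ as a monomial in the twists, obtained from \eqref{twist} and sphericity; (ii) applying the balancing relation to $X_1\ot X_2\ot X_3$ and comparing the two resulting expressions, which yields, for all triples of simple objects, multiplicative relations of the form $\prod_{Z\in\O(\C)}\theta_Z^{m_Z}=\pm 1$ with integer exponents determined by the fusion multiplicities; and (iii) the key linear-algebra step showing that the resulting integer exponent matrix is nonsingular, so that these relations force every $\theta_X$ to be a root of unity (this is where an explicit $N$ depending only on the Grothendieck ring comes from). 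Merely observing that the $N_{XY}^Z$ are bounded nonnegative integers does not produce (iii). Until (i)--(iii) are actually derived, the proposal is a plan rather than a proof.
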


%%%%%%%%%%%%%%%%%%%%%%%%%%%%%%%%%%%%%%%%%%%%%%%%%%%%%%%%%
\subsection{Modular categories}
\label{modular}

The categorical dimension of a fusion category $\C$ was defined in \eqref{dim def}.
When  $\C$ is premodular we have
\begin{equation}
\dim(\C)=\sum_{X\in \O(\C)}\, d_X^2,
\end{equation}
where $d$ is the dimension associated to the spherical structure of $\C$.
%Recall that  $\dim(\C)\neq 0$ by Theorem~\ref{ENO23}.

Let $E=\{ E_{XY} \}_{X,Y\in \O(\C)}$ be the square matrix such that $E_{XY}=1$
if $X=Y^*$ and $E_{XY}=0$ otherwise. 

\begin{proposition}
\label{S2=E}
Let $\C$ be a modular category and $S$ be its $S$-matrix. Then $S^2=\dim(\C)E$.
%In other words, $S^{-1} =\{ \dim(\C)^{-1} s_{XY^*} \}$.
\end{proposition}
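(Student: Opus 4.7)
The plan is to reduce $(S^2)_{YW}$ to a single scalar $A_V := \sum_X d_X\, s_{XV}$ via the Verlinde formula, and then to evaluate $A_V$ using a virtual regular element. Using the symmetry of $S$ (Remark~\ref{Smatrem}), I would write
\[
(S^2)_{YW} = \sum_X s_{XY}\, s_{XW}
\]
and apply \eqref{ss= Nds} with $X$ as the common index to get $(S^2)_{YW} = \sum_V N_{YW}^V A_V$.  Since for simple $Y,\,W$ the multiplicity $N_{YW}^\be = \dim\Hom(\be,\, Y \ot W)$ equals $1$ precisely when $W \cong Y^*$ and $0$ otherwise, we have $N_{YW}^\be = E_{YW}$. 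Hence the target identity $S^2 = \dim(\C)\,E$ is equivalent to the cleaner claim $A_V = \dim(\C)\,\delta_{V,\be}$ for every simple $V$.

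To compute $A_V$, I would introduce the virtual regular element $R := \sum_X d_X\, X \in K(\C) \ot_\BZ \kk$. Because $d$ is multiplicative on tensor products (sphericality) and $d_{X^*} = d_X$, the coefficient of a simple $W$ in $V \cdot R$ is
\[
\sum_X d_X\, N_{VX}^W = \sum_X d_X \dim\Hom(V^* \ot W,\, X) = d_{V^*}\, d_W = d_V\, d_W,
\]
so $V \cdot R = d_V R$ for every simple $V$. Applying the character $h_Z$ of Remark~\ref{K0 characters} to both sides yields $h_Z(V)\, h_Z(R) = d_V\, h_Z(R)$. At this point I would invoke modularity: non-degeneracy of $S$ means its rows are linearly independent, hence the characters $h_X = s_{X,-}/d_X$ are pairwise distinct. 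Thus $h_Z(R) \ne 0$ forces $h_Z = h_\be$, i.e., $Z = \be$; combined with the direct evaluation $h_\be(R) = \sum_X d_X^2 = \dim(\C)$ (since $h_\be(X) = d_X$), this gives $h_Z(R) = \dim(\C)\,\delta_{Z,\be}$.

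To close, I would use $s_{XV} = d_X h_X(V)$ and symmetry to rewrite $A_V = \sum_X d_X\, s_{VX} = d_V\, h_V(R)$, producing $A_V = \dim(\C)\,\delta_{V,\be}$ and hence the desired identity via the reduction of the first paragraph. The main obstacle is precisely where modularity is used: the eigenvector identity $V \cdot R = d_V R$ holds in any spherical braided fusion category, but only in the modular case does distinctness of the characters $h_X$ force $R$ to be supported on the single primitive idempotent attached to $h_\be$, which is what collapses $A_V$ to $0$ for $V \ne \be$.
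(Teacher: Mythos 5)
Your argument is correct, and it differs from the paper's proof in a way worth noting. The paper splits the computation: for $Y\neq Z$ it quotes the orthogonality relation \eqref{S-matr ort} to kill the entries $\sum_X s_{XY}s_{XZ^*}$, and for the remaining entries it uses the Verlinde formula \eqref{ss= Nds} together with a second appeal to \eqref{S-matr ort} to evaluate the scalar $\sum_X d_X s_{XW}$ as $\dim(\C)\delta_{W,\be}$. You instead treat all entries uniformly: the Verlinde reduction $(S^2)_{YW}=\sum_V N_{YW}^V A_V$ with $A_V=\sum_X d_X s_{XV}$ handles the off-antidiagonal entries automatically (since $N_{YW}^{\be}=E_{YW}$), and you evaluate $A_V$ not via \eqref{S-matr ort} but by proving the eigenvector identity $V\cdot R=d_V R$ for the virtual regular element $R=\sum_X d_X X$ and then using that non-degeneracy of $S$ forces the characters $h_X$ of \eqref{homK0} to be pairwise distinct, so $h_Z(R)=\dim(\C)\delta_{Z,\be}$. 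The paper's route is shorter granted \eqref{S-matr ort}, which the survey states without proof; your route is more self-contained (it needs only \eqref{ss= Nds}, Remark~\ref{K0 characters}, Remark~\ref{Smatrem}, sphericality for $d_{V^*}=d_V$, and $d_X\neq 0$), and it isolates exactly where modularity enters, namely in the distinctness of the characters. In effect you have re-proved the special case of \eqref{S-matr ort} that the paper uses, so the two proofs rest on the same underlying facts but organize them differently. One cosmetic remark: you only need the implication ``$A_V=\dim(\C)\delta_{V,\be}$ implies $S^2=\dim(\C)E$,'' not the stated equivalence, though the equivalence does hold (take $Y=\be$ in your first display).
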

\begin{proof}
Since $S$ is non-degenerate, 
the equality $h_Y = h_Z$ for $Y,\, Z\in \O(\C)$  holds if and only if $Y=Z$, 
where $h_Y: K(\C)\to \kk$ are the characters defined in \eqref{homK0}.

Suppose $Y\neq Z$. Using \eqref{S-matr ort} we have
$$
\sum_{X\in \O(\C)}\, s_{XY} s_{XZ^*}= \sum_{X\in \O(\C)}\, s_{XY} s_{X^*Z} =  0
$$
It remains to check that
$\sum_{X\in \O(\C)}\, s_{XY} s_{XY^*}= \dim(\C)$ for all $Y\in \O(\C)$.
We compute 
\begin{eqnarray*}
\sum_{X\in \O(\C)}\, s_{XY} s_{XY^*}
&=& \sum_{X \in \O(\C)}\, d_X  s_{XW}  \sum_{W\in \O(\C)}\,  N_{YY^*}^W \\
&=& \dim(\C) N_{YY^*}^{\be} =\dim(\C).
\end{eqnarray*}
Here the first equality is \eqref{ss= Nds}. The second equality is a
consequence of orthogonality of  characters \eqref{S-matr ort}, since
$$
\sum_{X \in \O(\C)}\, d_X  s_{XW} =  d_W \sum_{X \in \O(\C)}\, d_X  h_W(X^*)
$$
and the latter expression is equal to $\dim(\C)$ if $W=\be$ and $0$ otherwise. 
\end{proof}

\begin{corollary}
\label{verlinde}
%\textbf{(Verlinde formula).}
Let $\C$ be a modular category.
For all objects $Y,\,Z,\, W\in \O(\C)$ we have
\begin{equation}
\label{ver eqn}
\sum_{X\in \O(\C)}\, \frac{s_{XY}s_{XZ}s_{XW^*}}{d_X}
=\dim(\C) N_{YZ}^W.
\end{equation}
\end{corollary}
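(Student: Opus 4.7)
The plan is to derive the identity directly by combining the Verlinde formula \eqref{ss= Nds} already proved and the matrix identity $S^2 = \dim(\C) E$ from Proposition~\ref{S2=E}. The strategy is to rewrite \eqref{ss= Nds} as an expansion of the normalized product $s_{XY}s_{XZ}/d_X$ in the row of $S$ indexed by $X$, then contract against another column of $S$ to invoke $S^2$.

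First I would start from \eqref{ss= Nds}, divide both sides by $d_X$ (which is nonzero since $X$ is simple), and obtain
\begin{equation*}
\frac{s_{XY}s_{XZ}}{d_X} = \sum_{U\in \O(\C)}\, N_{YZ}^{U}\, s_{XU},
\end{equation*}
valid for every $X\in \O(\C)$. Next I would multiply through by $s_{XW^*}$ and sum over $X\in \O(\C)$, producing
\begin{equation*}
\sum_{X\in \O(\C)}\, \frac{s_{XY}s_{XZ}s_{XW^*}}{d_X} \;=\; \sum_{U\in \O(\C)}\, N_{YZ}^{U}\, \Bigl(\sum_{X\in \O(\C)}\, s_{XU}\, s_{XW^*}\Bigr).
\end{equation*}

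The inner sum is exactly an entry of $S^2$: using the symmetry $s_{XU}=s_{UX}$ (Remark~\ref{Smatrem}) we have $\sum_X s_{XU}s_{XW^*} = (S^2)_{U,W^*}$. By Proposition~\ref{S2=E} this equals $\dim(\C)\,E_{U,W^*}$, which is $\dim(\C)$ when $U = (W^*)^* = W$ and $0$ otherwise. Substituting collapses the sum over $U$ to the single term $U=W$, yielding $\dim(\C)\, N_{YZ}^{W}$ on the right-hand side, which is exactly \eqref{ver eqn}.

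There is no real obstacle here: modularity (non-degeneracy of $S$) is used only through Proposition~\ref{S2=E}, and the argument is purely algebraic manipulation of the two identities already in hand. The only point requiring care is tracking the duals when applying $S^2 = \dim(\C) E$, which is why the index $W^*$ (rather than $W$) appears on the left-hand side of the statement in the first place.
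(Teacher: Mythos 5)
Your proof is correct and is exactly the argument the paper intends: the corollary is stated without explicit proof as an immediate consequence of the Verlinde formula \eqref{ss= Nds} together with Proposition~\ref{S2=E}, which is precisely the combination you carry out (divide by $d_X$, contract against the column indexed by $W^*$, and use $S^2=\dim(\C)E$ with the symmetry of $S$). No gaps; your bookkeeping of the dual in $E_{U,W^*}$ is the only delicate point and you handle it correctly.
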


Thus, the $S$-matrix determines the fusion rules of $\C$.

For any $Z\in \O(\C)$ define the following
square matrices:  
\begin{equation*}
D^Z:=\left( \delta_{XY}\frac{s_{XZ}}{d_X} \right)_{X,Y\in \O(\C)}
\qquad \mbox{ and } \qquad
N^Z =  \left( N_{YZ}^W \right)_{Y, W\in \O(\C)}.
\end{equation*}

\begin{corollary}
Let $\C$ be a modular category with the $S$-matrix $S$. Then
$D^Z =S^{-1} N^Z S$ for all $Z\in \O(\C)$, i.e., conjugation 
by the $S$-matrix diagonalizes the fusion rules of $\C$.
\end{corollary}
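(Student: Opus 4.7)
The plan is to show the equivalent statement $N^Z = S D^Z S^{-1}$ by direct computation, using the Verlinde formula of Corollary \ref{verlinde} together with the explicit description of $S^{-1}$ coming from Proposition \ref{S2=E}.

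First, I would unpack $S^{-1}$. Since $S^2 = \dim(\C)\, E$ and $E$ implements the involution $X \mapsto X^*$ on the basis $\O(\C)$, the inverse $S$-matrix has entries
\begin{equation*}
(S^{-1})_{XW} = \frac{s_{X W^*}}{\dim(\C)} = \frac{s_{X^* W}}{\dim(\C)},
\end{equation*}
where the second equality uses the symmetry $s_{X^*Y^*} = s_{XY}$ from Remark \ref{Smatrem}.

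Next, I would compute $(S D^Z S^{-1})_{YW}$. Since $D^Z$ is diagonal with $D^Z_{XX} = s_{XZ}/d_X$, only the diagonal index $X$ survives, giving
\begin{equation*}
(S D^Z S^{-1})_{YW} = \sum_{X\in \O(\C)} s_{YX}\,\frac{s_{XZ}}{d_X}\,\frac{s_{X W^*}}{\dim(\C)}
= \frac{1}{\dim(\C)}\sum_{X\in \O(\C)} \frac{s_{XY}\, s_{XZ}\, s_{X W^*}}{d_X},
\end{equation*}
where I used the symmetry $s_{YX} = s_{XY}$. By the Verlinde formula \eqref{ver eqn} this sum equals $N_{YZ}^W$, which by definition is precisely $N^Z_{YW}$. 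Hence $N^Z = S D^Z S^{-1}$, equivalently $D^Z = S^{-1} N^Z S$, as claimed.

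There is no real obstacle here: the result is essentially an algebraic repackaging of Corollary \ref{verlinde}, and both ingredients ($S^2 = \dim(\C)E$ and the Verlinde formula) have already been established. If anything, the only bookkeeping subtlety is keeping track of duals versus non-duals when inverting $S$, which is handled by the symmetry relation in Remark \ref{Smatrem}.
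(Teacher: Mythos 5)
Your computation is correct: the entries of $S^{-1}$ are indeed $(S^{-1})_{XW}=s_{XW^*}/\dim(\C)$ by Proposition~\ref{S2=E} (together with $E^2=I$ and $s_{XW^*}=s_{X^*W}$), and then the Verlinde formula \eqref{ver eqn} turns $(SD^ZS^{-1})_{YW}$ into $N_{YZ}^W$, which is exactly the $(Y,W)$ entry of $N^Z$. The paper states the corollary without proof, so there is nothing to match literally; the most economical derivation, however, bypasses both Proposition~\ref{S2=E} and Corollary~\ref{verlinde}: formula \eqref{ss= Nds} says directly that for each fixed $X$ the vector $v_X=(s_{XW})_{W\in\O(\C)}$ satisfies
\begin{equation*}
(N^Z v_X)_Y=\sum_{W\in\O(\C)} N_{YZ}^W\, s_{XW}=\frac{s_{XZ}}{d_X}\, s_{XY},
\end{equation*}
i.e.\ $N^Z S = S D^Z$ as matrices, and one concludes $D^Z=S^{-1}N^ZS$ using only the nondegeneracy of $S$ (modularity). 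Your route is a legitimate repackaging — it uses ingredients already established in the text — but it invokes the heavier facts (the explicit inverse of $S$ and Verlinde) where the bare orthogonality-free eigenvector relation already suffices; the lighter argument also makes transparent that the columns of $S$ are the common eigenvectors of all the fusion matrices, which is the conceptual content of ``$S$ diagonalizes the fusion rules.''
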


\begin{proposition}
\label{EGdiv}
Let $\C$ be  a modular category and let $X\in \O(\C)$.
Then $\frac{\dim(\C)}{d_X^2}$ is an algebraic integer.
\end{proposition}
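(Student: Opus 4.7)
The plan is to combine Proposition~\ref{S2=E} with the character interpretation of the $S$-matrix from Remark~\ref{K0 characters}. Recall that for each $X\in \O(\C)$ the map $h_X : K(\C)\to \kk$, $Y \mapsto s_{XY}/d_X$, is a ring homomorphism. The key observation is that every value $h_X(Y)$ is an algebraic integer: the Grothendieck ring $K(\C)$ is a finitely generated $\BZ$-module with basis $\O(\C)$ and non-negative integer structure constants $N_{YZ}^W$, so every element of $K(\C)$ is integral over $\BZ$ (it satisfies the characteristic polynomial of the integer matrix $N^Y$ describing left multiplication by $Y$). A ring homomorphism from an integral extension of $\BZ$ into $\kk$ lands in the ring of algebraic integers, so $h_X(Y)\in \overline{\BZ}$ for every $Y\in \O(\C)$.

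Next, I would read off from Proposition~\ref{S2=E} the diagonal identity
\begin{equation*}
\sum_{Y\in \O(\C)} s_{XY}\, s_{XY^*} \;=\; \dim(\C),
\end{equation*}
which appeared inside the proof of that proposition. Dividing through by $d_X^2$ rewrites this as
\begin{equation*}
\frac{\dim(\C)}{d_X^2}
\;=\; \sum_{Y\in \O(\C)} \frac{s_{XY}}{d_X}\cdot\frac{s_{XY^*}}{d_X}
\;=\; \sum_{Y\in \O(\C)} h_X(Y)\, h_X(Y^*).
\end{equation*}
The right-hand side is a finite sum of products of algebraic integers and hence an algebraic integer, which is exactly what Proposition~\ref{EGdiv} asserts.

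There is no real obstacle here beyond being careful that $h_X$ truly takes values in $\overline{\BZ}$. The only subtle point is the standard integrality argument for eigenvalues of non-negative integer matrices; once that is in hand, the statement follows immediately from the formula $S^2=\dim(\C)E$ established in Proposition~\ref{S2=E}.
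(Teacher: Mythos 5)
Your proposal is correct and follows essentially the same route as the paper: divide the diagonal identity $\sum_{Y} s_{XY}s_{XY^*}=\dim(\C)$ coming from $S^2=\dim(\C)E$ by $d_X^2$ and recognize each factor $s_{XY}/d_X$ as a value of the character $h_X$ on an element integral over $\BZ$, hence an algebraic integer. Your phrasing via integrality of $K(\C)$ is just a restatement of the paper's remark that these values are eigenvalues of the integer fusion matrices, so there is nothing to add.
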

\begin{proof}
We compute, using Proposition~\ref{S2=E}:
\begin{equation}
\label{ratio}
\frac{\dim(\C)}{d_X^2} 
= \sum_{Y\in \O(\C)}\, \frac{s_{XY}}{d_X}  \frac{s_{XY^*}}{d_X} 
= \sum_{Y\in \O(\C)}\,  h_Y(X) h_{Y^*}(X),
\end{equation}
where $h_Y,\, Y\in \O(C),$ are characters of $K(\C)$ defined in 
\eqref{homK0}. Since  $h_Y(X)$ is
an eigenvalue of the integer matrix $N^X$, it is an algebraic integer.
Hence,  the right hand side of \eqref{ratio} is an algebraic integer.
\end{proof}

%%%%%%%%%%%%%%%%%%%%%%%%%%%%%%%%%%%%%%%%%%%%%%%%%%%%%%%%%
\subsection{Modular group representation and Galois action}
\label{sect mod group and Galois}

Modular categories have important arithmetic features that we describe next.

The {\em modular group} is, by definition, the group 
$\Gamma:= SL_2(\mathbb{Z})$ of  $2\times 2$ matrices with integer
entries and determinant $1$. 

It is known that $\Gamma$ is generated by two matrices
\begin{equation}
\mathfrak{s}:= \begin{pmatrix} 0 &-1 \\ 1& 0  \end{pmatrix}
\quad \mbox{ and }\quad
\mathfrak{t}:= \begin{pmatrix} 1 &1 \\ 0& 1  \end{pmatrix}.
\end{equation}

Let $\C$ be a modular category. 
It turns out that $\C$ gives rise to a projective representation
of $\Gamma$. This justifies the terminology.  Namely,  let $S =\left(s_{XY}\right)_{X,Y\in \O(\C)}$
be the $S$-matrix of $S$ and 
let $T=\left(t_{XY}\right)_{X,Y\in \O(\C)}$ be a diagonal matrix with entries
$t_{XY}=\delta_{X,Y}\theta_X$.

The assignments 
\begin{equation}
\label{projGamma}
\mathfrak{s} \mapsto \frac{1}{\sqrt{\dim(\C)}} S \quad \mbox{ and } \quad \mathfrak{t} \mapsto T
\end{equation}
define a projective representation $\rho: \Gamma \to GL_{|\O(\C)|}(\kk)$.
When $\C$ is the center of a fusion category this representation is linear.
%In general, it can be lifted to a linear representation $\rho: \Gamma \to GL_{|\O(\C)|}(\k)$. 

Let $N$ denote the order of $T$.  It was shown in \cite{NgS} that the kernel of $\rho$ 
is a congruence subgroup of level $N$ (i.e.,  $\Ker(\rho)$ contains the kernel  of
the natural group homomorphism $SL_2(\mathbb{Z}) \to SL_2(\mathbb{Z}/N\mathbb{Z})$). 
For Hopf algebras this result was established in \cite{SZ}.

The entries of  $S$ and $T$  are integers in $\mathbb{Q}[\xi_N]$, where $\xi_N$
is a primitive $N$th root of unity \cite{CG, dBG}.   Furthermore, matrices
in the image of $\rho$ have the following remarkable property  with respect
to the Galois group $\Gal(\mathbb{Q}(\xi_N)/\mathbb{Q})$ (see \cite{DLN}
and references therein).  For every $\sigma \in   
\Gal(\mathbb{Q}(\xi_N)/\mathbb{Q})$ the matrix $G_\sigma:= \sigma(S)S^{-1}$
is a signed permutation matrix  and 
\[
\sigma^2(\rho(M)) = G_\sigma \rho(M)  G_\sigma^{-1}
\]
for all $M \in \Gamma$.  In particular, there is a permutation $\tilde{\sigma}$ of $\O(\C)$
such that  
\[
\sigma(s_{XY})= \pm s_{X\tilde{\sigma}(Y)} \qquad \mbox{ and } \qquad 
\sigma^2(\theta_X) = \pm \theta_{\tilde{\sigma}(X)}
\]
for all $X,\,Y \in \O(\C)$.

%%%%%%%%%%%%%%%%%%%%%%%%%%%%%%%%%%%%%%%%%%%%%%%%%%%%%%%%%
\subsection{Centralizers and non-degeneracy}

Recall from \cite{Mu2} that
objects $X$ and $Y$ of a braided fusion category $\C$ are said to
{\em centralize\,}  each other if
\begin{equation} \label{monodromy-drinf}
c_{Y,X}\circ c_{X,Y} =\id_{X\ot Y}.
\end{equation}
The {\em centralizer\,} $\D'$ of a fusion subcategory $\D\subset\C$ is defined to
be the full subcategory of objects of $\C$ that centralize each object of $\D$.
It is easy to see that $\D'$ is a fusion subcategory of $\C$.
Clearly, $\D$ is symmetric if and only if $\D\subset\D'$.

\begin{definition}
\label{nondegdef}
We will say that a  braided fusion category $\C$ is {\em non-degenerate}
if $\C'=\Vec$.
\end{definition}

%A non-degenerate braided fusion category $\C\neq \Vec$ is {\em prime} if it has no 
%proper non-degenerate braided fusion subcategories other than $\Vec$.
%Clearly, a non-trivial simple  braided fusion category is prime.

For a fusion subcategory $\D$ of a non-degenerate braided fusion category $\C$ one 
has the following properties, see \cite{Mu2} and  \cite[Theorems 3.10, 3.14]{DGNO2}:
\begin{gather}
\label{double centralizer}
\D''=\D ,\\
\FPdim(\D)\FPdim(\D')=\FPdim(\C).
\end{gather}
Furthermore, if $\D$ is non-degenerate, then 
\begin{equation}
\label{Mueger's factorization}
\C \cong \D \bt \D'.
\end{equation}

\begin{example}
\label{centralizer of pt}
Let $\C$ be a non-degenerate braided fusion category.  Let $\C_{ad}$ be the adjoint
subcategory of $\C$ (see Definition~\ref{def:adjoint})
and let $\C_{pt}$ be the maximal pointed subcategory
of $\C$.  Then 
\begin{equation}
\label{Cpt and Cad}
\C_{ad}' =\C_{pt}\quad \mbox{ and } \quad \C_{pt}' =\C_{ad}.
\end{equation}
\end{example}

For the proof of the following result see  \cite{Mu1} and \cite[Proposition 3.7]{DGNO1}.

\begin{proposition}
\label{Muger1prime}
The following conditions are equivalent for a pre-modular category $\C$:
\begin{enumerate}
\item[(i)] $\C$ is modular;
\item[(ii)] $\C$ is non-degenerate, i.e., $\C'=\Vec$;
\item[(iii)] $\C$ is factorizable, i.e., the functor  $G:\C \boxtimes \C^\rev\to \Z (\C)$ defined in \eqref{G:CC-->ZC} is an equivalence.
\end{enumerate}
\end{proposition}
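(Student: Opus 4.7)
The plan is to prove the two equivalences $(i) \Leftrightarrow (ii)$ and $(ii) \Leftrightarrow (iii)$ separately. Both reduce to Frobenius--Perron dimension bookkeeping combined with a structural analysis identifying $\C'$ as the obstruction to the desired property.

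For $(ii) \Leftrightarrow (iii)$: the functor $G$ is a braided tensor functor between fusion categories of equal Frobenius--Perron dimension $\FPdim(\C)^2$, by \eqref{dim center}. Hence by Theorem~\ref{thm Lagrange functor} and the remark following it, $G$ is an equivalence iff it is fully faithful iff it is injective on isomorphism classes of simple objects. The contrapositive of $(iii) \Rightarrow (ii)$ is then a one-liner: any nontrivial simple $X \in \C'$ satisfies $c_{-,X} = c_{X,-}^{-1}$, which forces
\[
G(X \boxtimes \be) = (X, c_{-,X}) = (X, c_{X,-}^{-1}) = G(\be \boxtimes X)
\]
in $\Z(\C)$, destroying injectivity. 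For the converse $(ii) \Rightarrow (iii)$, I would observe that the images $G(\C), G(\C^\rev) \subset \Z(\C)$ mutually centralize (their half-braidings compose to the identity), intersect precisely in lifts of objects of $\C'$, and together have Frobenius--Perron dimensions multiplying to $\FPdim(\Z(\C))$; granting $\C' = \Vec$, the M\"uger decomposition \eqref{Mueger's factorization} applied inside $\Z(\C)$ then yields $\Z(\C) \cong G(\C) \boxtimes G(\C^\rev)$, i.e., $G$ is an equivalence.

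For $(i) \Leftrightarrow (ii)$: the essential tool is the Brugui\`eres--M\"uger inequality $|s_{XY}| \leq d_X d_Y$ for simple $X, Y$, with equality iff $X$ and $Y$ centralize each other. The direction $(i) \Rightarrow (ii)$ is then immediate: any simple $X \in \C'$ has $s_{X,Y} = d_X d_Y = d_X \cdot s_{\be,Y}$ for all $Y$, so its $S$-row is proportional to the $\be$-row; non-degeneracy of $S$ forces $X \cong \be$. For $(ii) \Rightarrow (i)$, I would upgrade this to the full orthogonality $SS^* = \dim(\C) E$ of Proposition~\ref{S2=E} directly under the hypothesis $\C' = \Vec$: combine the Verlinde formula \eqref{ss= Nds}, the character orthogonality \eqref{S-matr ort}, and the equality case of the Brugui\`eres--M\"uger inequality to show that the off-diagonal sums $\sum_Y s_{XY}\, s_{Y^* Z}$ (for $X \not\cong Z$) vanish whenever $\C'$ contains no nontrivial objects to support them.

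The principal obstacle is the direction $(ii) \Rightarrow (i)$: the Brugui\`eres--M\"uger inequality only supplies an upper bound on $|s_{XY}|$, while non-degeneracy of $S$ demands the matching orthogonality identity, whose proof requires a Fourier-type duality between $\O(\C)$ and the characters of $K(\C) \otimes_\mathbb{Z} \kk$. A secondary prerequisite is proving the Brugui\`eres--M\"uger inequality itself, via a trace manipulation exploiting the rigidity and spherical structure of $\C$.
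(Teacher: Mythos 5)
The paper does not actually prove Proposition~\ref{Muger1prime} --- it refers to \cite{Mu1} and \cite[Proposition 3.7]{DGNO1} --- so your proposal must stand on its own, and it does not yet. The easy implications are fine: (i)$\Rightarrow$(ii) by your row-proportionality argument, and (iii)$\Rightarrow$(ii) by the observation that $G(X\boxtimes\be)\cong G(\be\boxtimes X)$ for a nontrivial simple $X\in\C'$. The substantive implications, however, have gaps. For (ii)$\Rightarrow$(iii) you invoke \eqref{Mueger's factorization}, and implicitly the identity $\FPdim(\D)\FPdim(\D')=\FPdim(\Z(\C))$ needed to identify $G(\C)'$ with $G(\C^\rev)$; but in this paper both statements are formulated only for subcategories of a \emph{non-degenerate} ambient category, and non-degeneracy of $\Z(\C)$ is Corollary~\ref{centerfactorizable}, which the paper deduces from the very proposition you are proving. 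The argument is repairable --- either quote M\"uger's decomposition theorem in its stronger form (non-degenerate subcategory of an arbitrary braided fusion category), or first deduce $\Z(\C)'=\Vec$ from the factorizability of centers \cite{ENO4} together with your own (iii)$\Rightarrow$(ii) argument applied to $\Z(\C)$ --- but as written the hypothesis of the key cited result is never verified, and within the paper's logical order the appeal is circular. (Your chain ``equivalence iff fully faithful iff injective on isomorphism classes of simples'' is also unjustified for a tensor functor not yet known to send simples to simples, though nothing essential rests on it.)

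The more serious gap is (ii)$\Rightarrow$(i), the actual content of the proposition, which you yourself call the principal obstacle. Your plan rests on the inequality $|s_{XY}|\le d_Xd_Y$ and its equality case, but in a general pre-modular category the categorical dimensions $d_X$ need not be positive, or even real, so neither the inequality nor the triangle-inequality analysis of its equality case is available: the paper states \eqref{sXY estimate} and Lemma~\ref{proj centr} only under standing integrality/pseudo-unitarity assumptions where $d_X=\FPdim(X)$, and Proposition~\ref{recognizing centralizing} likewise assumes pseudo-unitarity. Similarly, the orthogonality \eqref{S-matr ort} is an orthogonality of \emph{distinct characters}; in a degenerate pre-modular category distinct simple objects can induce the same character $h_Y$ (in a symmetric category all of them coincide), so invoking it for all $Y\not\cong Z$ presupposes essentially the injectivity you are trying to establish. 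The standard proof runs through a key lemma your sketch never states: evaluating $h_X$ on $R=\sum_Y d_Y\,Y$, which satisfies $Z\cdot R=d_ZR$ for all $Z$, gives $\sum_Y d_Y s_{XY}=0$ for every simple $X$ with $h_X\ne h_\be$; one then shows that $h_X=h_\be$ forces $X\in\C'$ (the delicate point in the non-pseudo-unitary case, where Theorem~\ref{AMV} plus a naive triangle inequality does not suffice), and $S^2=\dim(\C)E$ follows from the Verlinde formula \eqref{ss= Nds} together with $\dim(\C)\ne0$ \eqref{dim not 0}. Without that lemma, and without a correct substitute for the equality case of the inequality valid for arbitrary spherical structures, the direction (ii)$\Rightarrow$(i) remains unproved in your proposal.
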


\begin{corollary} 
\label{centerfactorizable}
Let $\C$ be a fusion category. Then its center $\Z (\C)$ is non-degenerate.
\end{corollary}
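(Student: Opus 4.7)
The plan is to deduce $\Z(\C)' = \Vec$ from the factorizability criterion in Proposition~\ref{Muger1prime}: it suffices to exhibit the canonical braided tensor functor
$$
G:\Z(\C) \bt \Z(\C)^{\rev} \;\longrightarrow\; \Z(\Z(\C))
$$
of \eqref{G:CC-->ZC} as an equivalence, since (iii)$\Rightarrow$(ii) in that proposition then forces non-degeneracy. (Strictly, Proposition~\ref{Muger1prime} is stated for pre-modular categories, but the equivalence (ii)$\Leftrightarrow$(iii) is purely a statement about the braiding and works for any braided fusion category; the pivotal hypothesis is only needed for the equivalence with modularity.)

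The key ingredient is Morita invariance of the Drinfeld center. I would view $\C$ as an indecomposable $(\C \bt \C^{\op})$-module category via simultaneous left and right tensor multiplication; as explained in an example of Section~\ref{sect duality}, one has $(\C \bt \C^{\op})^*_{\C} \cong \Z(\C)$, so $\C \bt \C^{\op}$ and $\Z(\C)$ are categorically Morita equivalent. Combining Theorem~\ref{2007 criterion} with the standard identifications $\Z(\A \bt \B) \cong \Z(\A) \bt \Z(\B)$ and $\Z(\A^{\op}) \cong \Z(\A)^{\rev}$ yields a braided equivalence
$$
\Z(\Z(\C)) \;\simeq\; \Z(\C \bt \C^{\op}) \;\simeq\; \Z(\C) \bt \Z(\C^{\op}) \;\simeq\; \Z(\C) \bt \Z(\C)^{\rev}.
$$
By \eqref{dim center}, both the source and the target of $G$ have Frobenius--Perron dimension $\FPdim(\C)^4$, so to upgrade $G$ to an equivalence it suffices to verify either injectivity or surjectivity, by Theorem~\ref{thm Lagrange functor} and the remark that follows it.

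The main obstacle is the identification step: showing that the abstract braided equivalence produced above is implemented precisely by the canonical functor $G$, rather than by some other braided equivalence. Concretely, the two embeddings $\Z(\C),\, \Z(\C)^{\rev} \hookrightarrow \Z(\Z(\C))$ coming from $G$ (built from the half-braidings $c_{-,X}$ and $\tilde c_{-,X}$) must be matched with the two natural copies of $\Z(\C)$ inside $\Z(\C \bt \C^{\op})$ arising from the two Deligne tensor factors. This compatibility reduces to unwinding the construction in Theorem~\ref{2007 criterion} and checking the half-braidings agree. Once it is in hand, essential surjectivity of $G$ is immediate, and (with the dimension match above) $G$ is an equivalence, so $\Z(\C)$ is factorizable and hence non-degenerate by Proposition~\ref{Muger1prime}.
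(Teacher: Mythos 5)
Your overall strategy is sound and, unlike the paper (whose proof of this corollary is a one-line citation of \cite{ENO4} for factorizability of the center), you attempt to actually derive the result from material in the survey. The first half is fine: $\C\bt\C^{\op}$ and $\Z(\C)$ are categorically Morita equivalent, so Theorem~\ref{2007 criterion} (more precisely, Schauenburg's direction of it, which does not presuppose the present corollary) together with $\Z(\A\bt\B)\cong\Z(\A)\bt\Z(\B)$ and $\Z(\A^{\op})\cong\Z(\A)^{\rev}$ gives an abstract braided equivalence $\Z(\Z(\C))\simeq\Z(\C)\bt\Z(\C)^{\rev}$, and the Frobenius--Perron dimensions match. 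The observation that (ii)$\Leftrightarrow$(iii) of Proposition~\ref{Muger1prime} needs no pivotal structure is also correct.

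The gap is exactly where you flag ``the main obstacle'': you never establish that the canonical functor $G$ of \eqref{G:CC-->ZC} --- or even just the pair of canonical embeddings $\Z(\C),\Z(\C)^{\rev}\hookrightarrow\Z(\Z(\C))$ --- is surjective, and you conclude with ``once it is in hand \dots''. This is not a routine unwinding; it is the entire mathematical content of the statement. An abstract braided equivalence $\Z(\Z(\C))\simeq\D\bt\D^{\rev}$ with $\D=\Z(\C)$ does \emph{not} by itself force $\D'=\Vec$: the M\"uger center of $\D\bt\D^{\rev}$ is $\D'\bt(\D')^{\rev}$, so the equivalence only tells you that $\Z(\C)$ is degenerate exactly when $\Z(\Z(\C))$ is, which is circular. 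Likewise, the dimension count reduces the problem to surjectivity (or full faithfulness) of $G$, but surjectivity of $G$ is \emph{equivalent} to non-degeneracy of $\Z(\C)$ (cf.\ Proposition~\ref{Muger1prime} and \eqref{double centralizer}), so nothing has been gained until the identification of the Schauenburg equivalence with $G$ is actually carried out --- and that identification, tracing half-braidings through the module-functor description of $\Z(\C\bt\C^{\op})$, is precisely the argument of \cite{ENO4}. As written, the proof assumes what it needs to prove at the decisive step.
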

\begin{proof} 
It is proved in \cite{ENO4} that $\Z(\C)$ is factorizable, so the result follows from Proposition~\ref{Muger1prime}.
\end{proof}

The following  Class Equation was proved in  \cite[Proposition 5.7]{ENO1}.
It is very useful  for classification of fusion categories, see Section~\ref{Sect prime power FP} below.

\begin{theorem}
\label{class eqn theorem}
Let $\A$ be a spherical fusion category.  Let $F: \Z(\A)\to \A$  be the forgetful functor.
Then 
\begin{equation}
\label{class equation}
\dim(\A) = \sum_{Z\in \O(\Z(\A))}\, [F(Z):\be] d_Z, 
\end{equation}
and $\frac{\dim(\A)}{d_Z}$ is an algebraic integer for every $Z\in \O(\Z(\A))$.
\end{theorem}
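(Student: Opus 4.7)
The plan is to compute both sides of \eqref{class equation} via the induction functor $I:\A\to\Z(\A)$, which is two-sided adjoint to the forgetful functor $F$. This functor exists because $F$ is a tensor functor between spherical fusion categories, and its standard description is
\[
I(Y) \;=\; \bigoplus_{X\in\O(\A)}\, X\ot Y\ot X^*
\]
as objects of $\A$, equipped with a canonical half-braiding obtained by averaging $c$ over $\O(\A)$. I will take this description of $I$ (together with its adjunction to $F$) as known; it is the main technical input and the only nonroutine ingredient in what follows.

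First I would expand $I(\be)$ in the basis $\O(\Z(\A))$. By Frobenius reciprocity together with simplicity of $\be$,
\[
[I(\be):Z] \;=\; \dim\Hom_{\Z(\A)}(Z,\,I(\be)) \;=\; \dim\Hom_\A(F(Z),\,\be) \;=\; [F(Z):\be]
\]
for every $Z\in\O(\Z(\A))$, hence $I(\be) \cong \bigoplus_Z [F(Z):\be]\,Z$ in $\Z(\A)$. Taking categorical dimensions and using the sphericity of $\Z(\A)$ gives
\[
d_{I(\be)} \;=\; \sum_{Z\in\O(\Z(\A))}\, [F(Z):\be]\, d_Z.
\]
On the other hand, $F$ is a spherical tensor functor, so it preserves categorical dimensions, and from the formula for $I$ above,
\[
d_{I(\be)} \;=\; d_{F(I(\be))} \;=\; \sum_{X\in\O(\A)}\, d_X d_{X^*} \;=\; \sum_{X\in\O(\A)}\, d_X^2 \;=\; \dim(\A),
\]
using $d_X = d_{X^*}$ in the spherical category $\A$. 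Combining these two identities yields \eqref{class equation}.

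For the integrality statement, I would apply Corollary~\ref{centerfactorizable}, which says $\Z(\A)$ is non-degenerate, hence modular by Proposition~\ref{Muger1prime}. Proposition~\ref{EGdiv} applied to $\Z(\A)$ then gives that $\dim(\Z(\A))/d_Z^2$ is an algebraic integer for every $Z\in\O(\Z(\A))$. Substituting $\dim(\Z(\A)) = \dim(\A)^2$ from \eqref{dim center} shows that $(\dim(\A)/d_Z)^2$ is an algebraic integer. Since $\dim(\A)$ and $d_Z$ are algebraic (indeed, cyclotomic) integers, $\dim(\A)/d_Z$ is an algebraic number satisfying the monic polynomial $T^2 - (\dim(\A)/d_Z)^2$ over the ring of algebraic integers; being integral over an integrally closed ring inside $\overline{\mathbb{Q}}$, it is itself an algebraic integer. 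The hardest step here is really the prerequisite Proposition~\ref{EGdiv} (which in turn uses the Verlinde formula and $S^2=\dim(\C)E$); modulo that and the description of $I$, the argument is bookkeeping.
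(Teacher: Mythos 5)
Your proposal is correct and follows essentially the same route as the paper: decompose $I(\be)$ in $\O(\Z(\A))$ via adjunction to get $\sum_Z [F(Z):\be]\,d_Z$, compute the same dimension through $F I(\be)\cong\bigoplus_X X\ot X^*$ to get $\dim(\A)$, and deduce integrality from Proposition~\ref{EGdiv} applied to the modular category $\Z(\A)$ together with $\dim(\Z(\A))=\dim(\A)^2$. The only difference is that you spell out two details the paper leaves implicit (the explicit induction formula $I(Y)=\bigoplus_X X\ot Y\ot X^*$ and the step from integrality of $(\dim(\A)/d_Z)^2$ to that of $\dim(\A)/d_Z$), both of which are handled correctly.
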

\begin{proof}
Let $I: \A \to \Z(\A)$ be the right adjoint of $F$.   We have 
\begin{equation}
\label{FI1}
FI(\be) \cong \bigoplus_{X\in \O(\A)}\, X\ot X^*
\end{equation}
and, hence,  the dimension of $I(\be)$ is equal to 
$\sum_{X\in \O(\A)}\, d_X^2 = \dim(\A)$.
On the other hand, $I(\be) \cong \oplus_{Z\in \O(\Z(\A))}\, [F(Z):\be] Z$. Taking the dimensions
of both sides of the last equation we obtain \eqref{class equation}. From Proposition~\ref{EGdiv}
we know that $\frac{\dim(\Z(\A))}{d_Z^2} = \left( \frac{\dim(\A))}{d_Z} \right)^2$ is an algebraic integer.
\end{proof}

\begin{remark}
Theorem~\ref{class eqn theorem} says that $\dim(\A)$ can be written as a sum  of algebraic integers
that are also divisors of $\dim(\A)$  in the ring of algebraic integers. 
This is an analogue of the Class Equation in group theory. 
Indeed, when $G$ is a finite group and $\A =\Rep(G)$ then simple subobjects of $I(\be)$
are in bijection with conjugacy classes of $G$ and their dimensions are cardinalities of the corresponding
conjugacy classes. 
\end{remark}

We have  the following  relation between the Frobenius-Perron  and categorical 
dimensions, \cite[Proposition 8.22]{ENO1}. 
%Recall that for any fusion category $\A$
%its Frobenius-Perron dimension  $\FPdim(\A)$ and categorical dimension $\dim(\A)$ 
%were introduced in \eqref{FPdim def} and \eqref{dim def}.

\begin{proposition}
\label{ratio dim/ FPdim}
For any spherical fusion category $\A$ over $\mathbb{C}$ the ratio $\frac{\dim(\A)}{\FPdim(\A)}$
is an algebraic integer $\leq 1$. 
\end{proposition}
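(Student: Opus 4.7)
The inequality $\dim(\A)/\FPdim(\A)\le 1$ is immediate from \eqref{dim < FPdim}, so the entire content of the proposition is the algebraic integrality of the ratio. The plan is to reduce the general spherical case to the modular case, and then in the modular case to identify the ratio with the square of the categorical dimension of a distinguished simple object.

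For the reduction, I pass to the Drinfeld center $\Z(\A)$. This is modular by Corollary~\ref{centerfactorizable} together with Proposition~\ref{Muger1prime}, and it inherits a spherical structure from $\A$. The dimension formulas \eqref{dim center} give
$$
\frac{\dim(\Z(\A))}{\FPdim(\Z(\A))}=\left(\frac{\dim(\A)}{\FPdim(\A)}\right)^{\!2},
$$
so once the proposition is established for $\Z(\A)$, the positive real number $\dim(\A)/\FPdim(\A)$ is a root of a monic quadratic with algebraic integer coefficients, hence itself an algebraic integer.

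For the modular case, let $\C$ be modular. Proposition~\ref{S2=E} shows that $S$ is invertible, and the conjugation identity $D^Z=S^{-1}N^ZS$ simultaneously diagonalizes all multiplication operators in $K(\C)\otimes_{\mathbb{Z}}\kk$; thus this algebra is a product of $|\O(\C)|$ copies of $\kk$, and its characters are exhausted by the maps $h_X(Y)=s_{XY}/d_X$, $X\in\O(\C)$, of Remark~\ref{K0 characters}. Since $\FPdim\colon K(\C)\to\kk$ is a ring homomorphism, there is a unique $R\in\O(\C)$ with $\FPdim(Y)=s_{RY}/d_R$ for every $Y$. The identity $\FPdim(X^*)=\FPdim(X)$ forces $s_{RX^*}=s_{RX}$, and Proposition~\ref{S2=E} gives $\sum_{X}s_{RX}s_{XR^*}=(S^2)_{R,R^*}=\dim(\C)$. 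Combining these,
$$
\FPdim(\C)=\sum_{X\in\O(\C)}\FPdim(X)^2=\frac{1}{d_R^2}\sum_{X\in\O(\C)}s_{RX}s_{RX^*}=\frac{\dim(\C)}{d_R^2}.
$$
So $\dim(\C)/\FPdim(\C)=d_R^2$, which is an algebraic integer because the assignment $X\mapsto d_X$ is a ring homomorphism $K(\C)\to\kk$ and therefore sends every class to an algebraic integer.

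The main subtlety is the identification of $\FPdim$ with one of the characters $h_R$; this rests on the non-degeneracy of the $S$-matrix and the resulting semisimplicity of the complexified Grothendieck algebra. Once this is in hand, the modular case reduces to direct $S$-matrix orthogonality, and the general spherical case follows via the square-root argument above.
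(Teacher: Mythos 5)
Your proof is correct and follows essentially the same route as the paper: pass to the modular center $\Z(\A)$, identify $\FPdim$ with one of the characters $h_R(Y)=s_{RY}/d_R$ of $K(\C)$, and conclude that the ratio equals $d_R^2$, an algebraic integer. Your write-up merely makes explicit two points the paper leaves terse, namely the square-root reduction via $\dim(\Z(\A))=\dim(\A)^2$ and the exhaustion of characters by the $h_X$ via Verlinde diagonalization.
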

\begin{proof}
Let $\C = \Z(\A)$. By  \eqref{dim < FPdim}
it suffices to show  that  $\frac{\dim(\C)}{\FPdim(\C)}$ is an algebraic integer. 
 Let $S =\{s_{XY}\}$ denote the $S$-matrix of $\C$.  The Frobenius-Perron
dimension is a homomorphism from $K(\C)$ to $\mathbb{C}$, hence, it must be of the form \eqref{homK0}.
Thus, there exists a distinguished object $X\in \C$
such that $\FPdim(Z) = \frac{s_{ZX}}{d_X}$ for all simple objects $Z$ in $\C$. Therefore,
\[
\FPdim(\C) =  \sum_Z\, \FPdim(Z)^2 = \sum_Z\,   \frac{s_{ZX}}{d_X} \frac{s_{Z^*X}}{d_X} = \frac{\dim(\C)}{d_X^2}.
\]
Thus, $\frac{\dim(\C)}{\FPdim(\C)} = d_X^2$. The latter is an algebraic integer since $d: K(\C)\to \mathbb{C}$
is a homomorphism.
\end{proof}

Now suppose that $\C$ is a pseudo-unitary non-degenerate braided fusion category over $\mathbb{C}$
(so that there is a spherical structure on $\C$ such that $d_X = \FPdim(X)$).  
One can recognize pairs of centralizing simple objects of $\C$  using the $S$-matrix.

\begin{proposition}
\label{recognizing centralizing} 
Let  $X,\, Y$ be simple objects of $\C$. Then $X$ centralizes $Y$ if and only 
if $s_{XY} = \FPdim(X) \FPdim(Y)$.
\end{proposition}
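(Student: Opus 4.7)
The plan is to derive the equivalence from formula \eqref{sij} combined with the Anderson--Moore--Vafa theorem (Theorem~\ref{AMV}), exploiting the fact that pseudo-unitarity gives $d_Z=\FPdim(Z)>0$ for every simple $Z$.

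For the easy direction, suppose $X$ centralizes $Y$, so $c_{Y,X}c_{X,Y}=\id_{X\otimes Y}$. Then
\[
s_{XY}=\Tr(c_{Y,X}c_{X,Y})=\Tr(\id_{X\otimes Y})=d_{X\otimes Y}=d_Xd_Y=\FPdim(X)\FPdim(Y),
\]
the last equality being pseudo-unitarity.

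For the converse, apply formula \eqref{sij}:
\[
s_{XY}=\theta_X^{-1}\theta_Y^{-1}\sum_{Z\in\O(\C)}N_{XY}^{Z}\,\theta_Z\,d_Z .
\]
By Theorem~\ref{AMV} each $\theta_Z$ is a root of unity, so $|\theta_Z|=1$, and by pseudo-unitarity $d_Z=\FPdim(Z)>0$. The triangle inequality then gives
\[
|s_{XY}|\le \sum_{Z}N_{XY}^{Z}\,d_Z=d_Xd_Y=\FPdim(X)\FPdim(Y).
\]
Assuming $s_{XY}=\FPdim(X)\FPdim(Y)$, we have equality, and because the summands $N_{XY}^{Z}\theta_Z d_Z$ differ only by the unit complex factors $\theta_Z$ while $s_{XY}\theta_X\theta_Y$ is a positive real, equality in the triangle inequality forces $\theta_X^{-1}\theta_Y^{-1}\theta_Z=1$ for every $Z$ with $N_{XY}^{Z}>0$.

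To conclude, I invoke the twist axiom \eqref{twist}: $\theta_{X\otimes Y}=(\theta_X\otimes\theta_Y)\,c_{Y,X}c_{X,Y}$. Since $\theta$ is a natural automorphism of $\id_\C$, on the $Z$-isotypic component of $X\otimes Y$ the left side acts by the scalar $\theta_Z$ and the factor $\theta_X\otimes\theta_Y$ acts by $\theta_X\theta_Y$. Hence the monodromy $c_{Y,X}c_{X,Y}$ acts on this component by $\theta_X^{-1}\theta_Y^{-1}\theta_Z=1$. As this holds for every simple $Z$ occurring in $X\otimes Y$, we obtain $c_{Y,X}c_{X,Y}=\id_{X\otimes Y}$, i.e.\ $X$ centralizes $Y$.

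The only step requiring real care is the equality case of the triangle inequality; once one notes that pseudo-unitarity contributes the positivity $d_Z>0$ and Theorem~\ref{AMV} contributes $|\theta_Z|=1$, the rigidity of the bound pins down each phase $\theta_X^{-1}\theta_Y^{-1}\theta_Z$ individually. Without pseudo-unitarity one would lose the positivity of $d_Z$ (indeed, $d_Z$ could even vanish or change sign), which is why the hypothesis is essential.
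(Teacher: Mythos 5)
Your proof is correct and follows essentially the same route as the paper: the easy direction by taking the trace of the identity monodromy, and the converse via formula \eqref{sij}, the bound $|\theta_Z|=1$, the equality case of the triangle inequality forcing $\theta_X^{-1}\theta_Y^{-1}\theta_Z=1$ for every $Z$ occurring in $X\ot Y$, and then \eqref{twist} to conclude $c_{Y,X}c_{X,Y}=\id_{X\ot Y}$. One small phrasing slip: it is not that $s_{XY}\theta_X\theta_Y$ is a positive real, but that $s_{XY}$ itself is positive real, so the sum $\sum_Z N_{XY}^Z\theta_Z d_Z=\theta_X\theta_Y s_{XY}$ has phase $\theta_X\theta_Y$, which is what pins each $\theta_Z$ to equal $\theta_X\theta_Y$ -- the conclusion you draw is the correct one.
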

\begin{proof}
If $X$ centralizes $Y$ then $c_{Y,X}c_{X,Y} =\id_{X\ot Y}$ and
\[
s_{XY}=  d_X d_Y = \FPdim(X) \FPdim(Y).
\]
Conversely, if $s_{XY} = \FPdim(X) \FPdim(Y)$ then using formula \eqref{sij}
we obtain
\begin{eqnarray*}
\FPdim(X) \FPdim(Y) &=& |s_{XY}| = 
|  \theta_X^{-1}\theta_Y^{-1} \sum_{Z\in \O(\C)}
\, N_{XY}^Z \theta_Z d_Z |  \\
&\leq&  \sum_{Z\in \O(\C)} \, N_{XY}^Z  \FPdim(Z)  = \FPdim(X) \FPdim(Y).
\end{eqnarray*}
The above inequality must be an equality, hence $\frac{\theta_Z}{\theta_X \theta_Y} =1$ for all simple objects $Z$
contained in $X\ot Y$. By \eqref{twist} this means that $c_{Y,X}c_{X,Y} =\id_{X\ot Y}$.
\end{proof}

 %%%%%%%%%%%%%%%%%%%%%%%%%%%%%%%%%%%%%%%%%%%%%%%%%
\subsection{Equivariantization and  de-equivariantization of  braided fusion categories}
\label{Sect de-eq}

Let $\C$ be a braided fusion category and let $G$ be a group acting on  $\C$
by braided autoequivalences (i.e., each $T_g,\, g\in G,$ from \eqref{Tg} is a
braided autoequivalence of $\C$).  Then the equivariantized fusion category $\C^G$
inherits the braiding from $\C$.  Note that $\C^G$ contains a Tannakian subcategory $\Rep(G)$
such that $\Rep(G)\subset (\C^G)'$.

Here we describe the converse construction, following \cite{Br, Mu1, P}. 
Let $\D$ be a braided fusion category containing a Tannakian subcategory $\Rep(G)$ such that 
\begin{equation}
\label{G in Dprime}
\Rep(G) \subset \D'.
\end{equation}
Let $A$ be the algebra of functions
on $G$. It is a commutative algebra in $\Rep(G)$ and, hence, in $\D$.  The category $\D_G$
of $A$-modules in $\D$ has a canonical structure of a fusion category with the tensor
product $\ot_A$.  Furthermore, condition~\eqref{G in Dprime} allows to define the braiding
on  $\D_G$.  Thus, $\D_G$ is a braided fusion category, called {\em de-equivariantization} of $\D$. 
There is a canonical action of $G$ by braided autoequivalences of $\D_G$ induced by the action
of $G$ on $A$ by translations. 

As the names suggest, the above two constructions are inverses of each other. Namely,
there exist canonical braided equivalences of fusion categories:
\[
(\C^G)_G \cong \C \quad \mbox{and}\quad (\D_G)^G \cong \D. 
\]
See \cite[Section 4]{DGNO2}
for a complete treatment of equivariantization and  de-equi\-va\-ri\-an\-ti\-zation.

%%%%%%%%%%%%%%%%%%%%%%%%%%%%%%%%%%%%%%%%%%%%%%%%%
%%%%%%%%%%%%%%%%%%%%%%%%%%%%%%%%%%%%%%%%%%%%%%%%%
%%%%%%%%%%%%%%%%%%%%%%%%%%%%%%%%%%%%%%%%%%%%%%%%%%%%%%%%%
\section{Characterization of  Morita equivalence}

%%%%%%%%%%%%%%%%%%%%%%%%%%%%%%%%%%%%%%%%%%%%%%%%%
\subsection{Braided equivalences of centers}

The following  theorem was proved in \cite[Theorem 3.1]{ENO2}. It is a categorical
counterpart of the well known fact in algebra  that  Morita equivalent rings
have isomorphic centers. 

\begin{theorem}
\label{2007 criterion}
Two fusion categories $\A$ and $\B$ are categorically Morita equivalent if and only if $\Z(\A)$
and $\Z(\B)$ are equivalent as braided fusion categories
\end{theorem}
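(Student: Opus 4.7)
The plan is to prove the two implications separately, with the reverse direction carrying most of the conceptual weight.

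For $(\Rightarrow)$: suppose $\B \cong (\A^*_\M)^\op$ for some indecomposable $\A$-module category $\M$. I would invoke the theorem of Schauenburg--M\"uger: for any fusion category $\C$ and any indecomposable $\C$-module category $\N$, there is a canonical braided equivalence $\Z(\C) \cong \Z((\C^*_\N)^\op)$. The cleanest conceptual proof realizes $\Z(\C)$ as the tensor category $\End_{\C|\C}(\C)$ of $\C$-bimodule endofunctors of the regular bimodule, endowed with its canonical braiding. The module category $\N$ implements a Morita $2$-equivalence between the $2$-categories of $\C$-bimodule and $\C^*_\N$-bimodule categories, sending the regular $\C$-bimodule to the regular $\C^*_\N$-bimodule, and hence inducing a braided equivalence between the corresponding endomorphism fusion categories. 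The $(-)^\op$ in the paper's definition of Morita equivalence exactly absorbs the reversal $\Z(\D^\op) \cong \Z(\D)^\rev$, producing $\Z(\A) \cong \Z(\B)$.

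For $(\Leftarrow)$: suppose $F : \Z(\A) \xrightarrow{\sim} \Z(\B)$ is a braided equivalence. The plan is to produce a $\B$-module category $\M$ with $(\B^*_\M)^\op \cong \A$ using the theory of Lagrangian algebras. For any fusion category $\C$, the right adjoint $I : \C \to \Z(\C)$ of the forgetful functor carries a canonical algebra structure, and $L_\C := I(\be) \in \Z(\C)$ is a connected commutative separable algebra with $\FPdim(L_\C)^2 = \FPdim(\Z(\C))$, hence Lagrangian. The fundamental reconstruction theorem recovers $\C$ (up to tensor equivalence) from the pair $(\Z(\C), L_\C)$ as the tensor category of $L_\C$-bimodules in $\Z(\C)$. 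Now set $L := F(L_\A) \in \Z(\B)$, again a Lagrangian algebra, let $\M$ be the category of right $L$-modules in $\Z(\B)$, and view it as a left $\B$-module category via the forgetful functor $\Z(\B) \to \B$. Then $(\B^*_\M)^\op$ is identified (by the paper's example relating duals to bimodule categories) with the tensor category of $L$-bimodules in $\Z(\B)$; transporting across $F^{-1}$ gives the category of $L_\A$-bimodules in $\Z(\A)$, which by reconstruction is $\A$. Hence $\A \cong (\B^*_\M)^\op$.

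The main obstacle is the reverse direction, and specifically the non-trivial correspondence between Lagrangian algebras in $\Z(\C)$ and fusion categories Morita-equivalent to $\C$: the reconstruction statement identifying $\C$ with the category of $L_\C$-bimodules in $\Z(\C)$, together with the naturality of this correspondence under braided equivalences of centers, is what does the real work. Once these are in hand the matching of $\A$ with $\B$ becomes formal, and the forward direction reduces to an essentially $2$-categorical statement about invariance of the bicategory of bimodule categories under Morita equivalence.
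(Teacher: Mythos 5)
Your proposal is correct and follows essentially the same route as the paper in both directions. The forward implication is identical in substance: the paper cites Schauenburg's identification of $\Z(\A)$ and $\Z(\B)$ with the category of $(\A\boxtimes\B)$-module endofunctors of the Morita bimodule $\M$, which is exactly your ``invariance of the bimodule $2$-category'' argument. For the converse, both arguments hinge on the same object: the commutative algebra $I(\be)$ in the center, transported across the given braided equivalence. The paper then pushes the transported algebra $L$ down to $\A$ via the forgetful functor, decomposes it there into indecomposable algebra summands $L_i$ (a step you omit --- $L$ need not be indecomposable as an algebra in $\A$), and takes right $L_i$-modules in $\A$; you instead stay inside the center and invoke the Lagrangian-algebra reconstruction. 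These are two packagings of the same argument, and yours is fine modulo two small imprecisions: the reconstruction of $\C$ from $(\Z(\C), I(\be))$ is as the tensor category of right $I(\be)$-modules in $\Z(\C)$ with $\ot_{I(\be)}$ (one-sided modules, using commutativity), and the left $\B$-module structure on the category of $L$-modules is not induced ``via the forgetful functor $\Z(\B)\to\B$'' (a tensor functor induces module structures in the other direction); it arises by regarding $L$ as an algebra in $\B$ via that forgetful functor and taking modules in $\B$, which is precisely what the paper does.
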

\begin{proof}
Let $\M$ be an indecomposable left $\A$-module category such that $\B=(\A^*_\M)^\op$. 
As  in  Section~\ref{sect duality}, we can view $\M$  as an $(\A \bt \B)$-module category. 
It was observed in \cite{Sch} (see also \cite{Mu-I}) that the category of 
$(\A \bt \B)$-module endofunctors of $\M$ can be identified, on the one hand,
with functors of tensor multiplication by objects of  $\Z(\A)$, and on the other hand,
with functors of tensor  multiplication by objects of $\Z(\B)$. Combined, these identifications
yield a canonical equivalence of braided fusion categories
\begin{equation}
\label{Schauenburg}
\Z(\A)\xrightarrow{\sim} \Z(\B).
\end{equation}

Conversely, let $\A$ and $\B$ be fusion categories such that there is a braided tensor equivalence
$a: \Z(\B)\xrightarrow{\sim} \Z(\A)$.
Let $F: \Z(\B) \to \B$ be the forgetful functor  and let $I: \B \to \Z(\B)$
be its right adjoint.  Then $I(\be)$ is a commutative algebra in $\Z(\B)$ and $L:=a \circ I(\be)$
 is a commutative algebra in $\Z(\A)$. We can also view $L$ as an algebra in $\A$. Let
 \[
 L =\oplus_i\,L_i
\]
be the decomposition of $L$ into the sum of indecomposable algebras in $\A$ and let $\M_i$
denote the category of right $L_i$-modules in $\A$. Then the equivalence class of $\M_i$
does not depend on $i$  and $\B \cong \A^*_{\M_i}$. See \cite[Section 3]{ENO2}
for details. 
\end{proof}

Thus, the Morita equivalence class of a fusion category $\A$ is completely determined
by its center $\Z(\A)$.

\begin{remark}
A more precise statement of Theorem~\ref{2007 criterion} is given in \cite[Theorem 1.1]{ENO3}.
Namely, the $2$-functor of taking the center gives a fully faithful  embedding 
of the $2$-category  of Morita equivalences of fusion categories into the $2$-category 
of braided fusion categories. In particular, for any fusion category $\A$ the group $\text{BrPic}(\A)$
of Morita autoequivalences of $\A$ is isomorphic to the group of braided autoequivalences
of $\Z(\A)$ and there is an equivariant bijection between the set of Morita equivalences between 
$\A$ and $\B$  and braided equivalences \eqref{Schauenburg}. See \cite[Section 5]{ENO3}
for details.
\end{remark}

%%%%%%%%%%%%%%%%%%%%%%%%%%%%%%%%%%%%%%%%%%%%%%%%%
\subsection{Recognizing  centers of extensions}

Let $G$ be a finite group and let  $\A$ be a $G$-extension of a fusion category $\B$:
\[
\A =\bigoplus_{g \in G}\, \A_g,\qquad \A_e =\B.
\]
The center of $\A$ contains a Tannakian subcategory $\E \cong \Rep(G)$ whose objects are constructed 
as follows. For every representation $\pi: G \to GL(V)$ of $G$ consider  the object $Y_\pi$ in $\Z(\A)$,
where $Y_\pi =V \ot \be$ as an object of $\A$ with the permutation isomorphism
\[
\gamma_{Y_\pi}:=\pi(g)\ot \id_X :   X \ot Y_\pi \xrightarrow{\sim} Y_\pi \ot X,\qquad X\in \A_g.
\]
Here we identified $X \ot Y_\pi$ and  $Y_\pi \ot X$ with $V \ot X$.  

The above property in fact characterizes $G$-extensions. The following statement is 
\cite[Theorem 1.3]{ENO2}.

\begin{theorem}
\label{ZC --> ext} 
Let $G$ be a finite group.  A fusion category $\A$ is  Morita equivalent to a $G$-extension
of some fusion category if and only if  $\Z(\A)$ contains a Tannakian subcategory $\E=\Rep(G)$. 
\end{theorem}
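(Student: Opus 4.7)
For the forward direction I would use the explicit construction of a Tannakian subcategory sketched immediately before the theorem: given any $G$-extension $\B=\bigoplus_{g\in G}\B_g$ of a fusion category, I would equip each $Y_\pi=V\ot\be$ with the half-braidings $\gamma^\pi_X=\pi(g)\ot\id_X$ on $X\in\B_g$, verify that the hexagon~\eqref{central object} reduces to $\pi(gh)=\pi(g)\pi(h)$ combined with multiplicativity of the grading, and check that the assignment $\pi\mapsto(Y_\pi,\gamma^\pi)$ is a fully faithful braided tensor functor $\Rep(G)\to\Z(\B)$ (full faithfulness uses that the grading is faithful, so every $g\in G$ is detected on some nonzero $\B_g$; braidedness is automatic because two half-braidings of the form $\pi\ot\id$ manifestly commute). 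Composing with the braided equivalence $\Z(\A)\simeq\Z(\B)$ from Theorem~\ref{2007 criterion} then delivers the Tannakian subcategory in $\Z(\A)$. The converse is the substantive direction.

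For the backward direction I would start from the given Tannakian embedding $\E=\Rep(G)\hookrightarrow\Z(\A)$ and take $A:=\Fun(G)$, the regular commutative algebra of $\E$, viewed as a connected \'etale algebra in $\Z(\A)$. Setting $B:=F(A)$ for the forgetful functor $F:\Z(\A)\to\A$ of~\eqref{ZC to ZDC} produces a separable algebra in $\A$, and $\M:=\A_B$ is an indecomposable left $\A$-module category. I would then define
\[
\B := (\A^*_\M)^{\op},
\]
a fusion category Morita equivalent to $\A$ by construction, identified with the category of $B$-bimodules in $\A$ (equivalently, with the de-equivariantization $\Z(\A)_A$ of Section~\ref{Sect de-eq}).

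The remaining and decisive task is to exhibit a faithful $G$-grading on $\B$. My plan is to exploit the induced braided equivalence $\Z(\A)\simeq\Z(\B)$ between centers: under it, the given Tannakian subcategory $\E\subset\Z(\A)$ corresponds to a Tannakian subcategory $\E'\subset\Z(\B)$ whose image under the forgetful functor $\Z(\B)\to\B$ lies inside $\Vec\subset\B$---the point is that $A$, pushed through to $\B$, becomes a multiple of $\be$ because tensoring over $A$ collapses $A$ to the unit. I would then invoke the standard duality between faithful $G$-gradings on a fusion category and such ``trivially embedded'' Tannakian subcategories $\Rep(G)\hookrightarrow\Z(\B)$ to extract a decomposition $\B=\bigoplus_{g\in G}\B_g$. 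The hard part will be this last duality: the bijection between faithful $G$-gradings of $\B$ and Tannakian central embeddings whose image is trivial in $\B$ is a nontrivial structural result, and it is the true technical heart of the proof; it rests on the equivariantization/de-equi\-va\-ri\-an\-ti\-zation machinery recalled in Section~\ref{Sect de-eq}, applied to $\Z(\B)$ rather than to $\A$ itself. Once granted, the rest of the argument is routine bookkeeping via Theorem~\ref{2007 criterion}.
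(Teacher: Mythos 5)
Your forward direction coincides with what the paper actually does: the construction of the objects $Y_\pi$ with half-braidings $\pi(g)\ot\id_X$ is exactly the preamble to the theorem, and transporting the resulting Tannakian subcategory along the braided equivalence of Theorem~\ref{2007 criterion} is the intended argument. For the converse the paper offers no proof at all (it cites \cite[Theorem 1.3]{ENO2}), and your outline does follow the strategy of that cited proof: pass to the dual $\B={}(\A^*_{\M})^{\op}\cong$ ($B$-bimodules in $\A$) with $B=F(\Fun(G))$, observe that under $\Z(\A)\simeq\Z(\B)$ the forgetful image of $\E$ in $\B$ becomes trivial because every object of $\Rep(G)$ induces up to a \emph{free} $A$-module (so $Y_\pi$ is sent to the free bimodule $V_\pi\ot B\cong\be_{\B}^{\oplus\dim\pi}$), and then run the preamble construction in reverse to reconstruct the grading. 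That last step, which you correctly flag as the heart of the matter, is genuinely true but you do not prove it: one assigns to each simple $X\in\B_g$ the tensor automorphism of the fiber functor $\pi\mapsto\gamma_X|_{Y_\pi}$ and invokes Tannakian reconstruction of $G$; faithfulness of the resulting grading is equivalent to full faithfulness of $\Rep(G)\hookrightarrow\Z(\B)$. As a plan this is acceptable, but as a proof the decisive implication is asserted rather than established.

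Two concrete corrections. First, your parenthetical identification of $\B$ with ``the de-equivariantization $\Z(\A)_A$ of Section~\ref{Sect de-eq}'' is wrong: that construction requires $\Rep(G)\subset\D'$, which fails for $\D=\Z(\A)$ since $\Z(\A)$ is non-degenerate, and in any case the dimensions disagree --- $\FPdim({}_B\A_B)=\FPdim(\A)$ while the category of $A$-modules in $\Z(\A)$ has Frobenius--Perron dimension $\FPdim(\A)^2/|G|$; these coincide only in the group-theoretical case $|G|=\FPdim(\A)$ of Corollary~\ref{GT criterion}. The object you want really is the bimodule category, not a de-equivariantization of the center. Second, $B=F(A)$ need not be an indecomposable algebra in $\A$, so $\A_B$ need not be an indecomposable module category; as in the paper's proof of Theorem~\ref{2007 criterion} one must decompose $B$ into indecomposable summands $B_i$ and use that the module categories $\A_{B_i}$ are all equivalent. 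Neither issue derails the argument, but both need to be fixed, and the grading-versus-trivially-embedded-Tannakian duality needs an actual proof for the write-up to be complete.
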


\begin{remark}
In the situation of Theorem~\ref{ZC --> ext}   consider the de-equivariantization $\E'_G$,
see Section~\ref{Sect de-eq}. Then  $\A$ is Morita  equivalent to a  $G$-graded fusion category 
$\B =\oplus_{g \in G}\, \B_g$ with  $\Z(\B_e)\cong \E'_G$. In particular,  
\begin{equation}
\label{FPdim Ae}
\FPdim(\B_e) = \frac{\FPdim(\A)}{\FPdim(\E)}.
\end{equation}
\end{remark}

\begin{corollary}
\label{GT criterion}
A fusion category $\A$ is group-theoretical if and only if $\Z(\A)$ contains a Tannakian subcategory $\E$
such that $\FPdim(\E) =\FPdim(\A)$.
\end{corollary}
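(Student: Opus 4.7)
The plan is to use Theorem~\ref{ZC --> ext} in both directions together with the invariance of the Frobenius-Perron dimension under categorical Morita equivalence, cf.~\eqref{FPdim duality}.

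For the forward direction, suppose $\A$ is group-theoretical, so that $\A$ is categorically Morita equivalent to some pointed fusion category $\Vec_G^\omega$. By Theorem~\ref{2007 criterion} there is a braided equivalence $\Z(\A)\cong \Z(\Vec_G^\omega)$. Since $\Vec_G^\omega$ is a (trivially) faithful $G$-extension of $\Vec$, Theorem~\ref{ZC --> ext} (or direct inspection, using that $\Rep(G)\subset \Z(\Vec_G^\omega)$ sits inside the center as the bundles supported at the identity, cf.~Example~\ref{ZVecG}) produces a Tannakian subcategory $\E\cong \Rep(G)$ inside $\Z(\Vec_G^\omega)\cong \Z(\A)$. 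Transporting, $\FPdim(\E)=|G|=\FPdim(\Vec_G^\omega)=\FPdim(\A)$.

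For the converse, assume $\Z(\A)$ contains a Tannakian subcategory $\E$ with $\FPdim(\E)=\FPdim(\A)$. Write $\E\cong\Rep(G)$ for some finite group $G$ with $|G|=\FPdim(\A)$.  By Theorem~\ref{ZC --> ext}, $\A$ is Morita equivalent to a faithful $G$-extension $\B=\bigoplus_{g\in G}\B_g$. Formula~\eqref{FPdim Ae} gives
\[
\FPdim(\B_e)=\frac{\FPdim(\A)}{\FPdim(\E)}=1,
\]
so $\B_e=\Vec$. It remains to show that a faithful $G$-extension of $\Vec$ is pointed; granting this, $\B\cong \Vec_G^{\omega}$ for some $3$-cocycle $\omega$, and hence $\A$ is group-theoretical by definition.

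The only step that requires a small argument is the last one, that $\B_e=\Vec$ forces $\B$ to be pointed. The key observation is that all homogeneous components of a faithful grading have equal Frobenius-Perron dimension, so $\FPdim(\B_g)=\FPdim(\B_e)=1$ for every $g\in G$. Since $\FPdim(\B_g)=\sum_{X\in\O(\B_g)}\FPdim(X)^2$ is a sum of squares of positive algebraic integers equal to $1$, each $\B_g$ has exactly one simple object, of Frobenius-Perron dimension $1$ (hence invertible). Thus every simple object of $\B$ is invertible, i.e.~$\B$ is pointed. This last bookkeeping is the only nontrivial point, and even it is routine; the content of the corollary really lies in Theorem~\ref{ZC --> ext} and~\eqref{FPdim Ae}.
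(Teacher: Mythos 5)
Your proof is correct and follows essentially the same route as the paper: Theorem~\ref{ZC --> ext} together with formula~\eqref{FPdim Ae} to get a Morita-equivalent $G$-extension with trivial component $\Vec$, which is then pointed. You merely spell out two points the paper leaves implicit — the easy forward direction and the bookkeeping (equal Frobenius--Perron dimensions of the homogeneous components of a faithful grading) showing that a $G$-extension of $\Vec$ is $\Vec_G^\omega$ — both of which are fine.
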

\begin{proof}
From \eqref{FPdim Ae} we see that $\A$ is categorically Morita equivalent to a $G$-extension whose trivial
component is $\Vec$. Any category with the latter property is pointed and is equivalent
to $\Vec_G^\omega$ for some $3$-cocycle $\omega$.
\end{proof}

%%%%%%%%%%%%%%%%%%%%%%%%%%%%%%%%%%%%%%%%%%%%%%%%%
%%%%%%%%%%%%%%%%%%%%%%%%%%%%%%%%%%%%%%%%%%%%%%%%%
%%%%%%%%%%%%%%%%%%%%%%%%%%%%%%%%%%%%%%%%%%%%%%%%%
\section{Classification results for  fusion categories of integral dimensions}

%%%%%%%%%%%%%%%%%%%%%%%%%%%%%%%%%%%%%%%%%%%%%%%
\subsection{Fusion categories of integral dimension}

The following result is proved in \cite[Proposition 8.24]{ENO1}.

\begin{proposition}
\label{integer --> ps un}
Let $\A$ be a fusion category  such that $\FPdim(\A)$ is an integer.  Then $\A$ is pseudo-unitary.
\end{proposition}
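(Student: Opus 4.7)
My plan is to study the ratio $\beta := \dim(\A)/\FPdim(\A)$ and force $\beta = 1$ via a Galois-theoretic argument. By Proposition~\ref{ratio dim/ FPdim}, $\beta$ is an algebraic integer satisfying $0 < \beta \leq 1$. Since $\FPdim(\A) \in \mathbb{Z}$ by hypothesis, $\FPdim(\A)$ is fixed by every $\sigma \in \Gal(\bar{\mathbb{Q}}/\mathbb{Q})$, and so $\sigma(\beta) = \sigma(\dim(\A))/\FPdim(\A)$.

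Next I would exploit the Galois conjugate fusion category $\sigma(\A)$ produced by Ocneanu rigidity (Theorem~\ref{thm rigidity} and the subsequent remark). The fusion coefficients $N_{XY}^Z$ are integers, so $\sigma(\A)$ has the same Grothendieck ring as $\A$, whence $\FPdim(\sigma(\A)) = \FPdim(\A)$. Meanwhile, the quantity $|X|^2 = \Tr(a_X)\Tr((a_X^{-1})^*)$ is independent of the choice of square root $a_X$ of $g_X$, as recorded in the paragraph preceding \eqref{dim def}, so $\dim(\A) = \sum_{X \in \O(\A)} |X|^2$ is built canonically from the structure constants of $\A$. Consequently $\sigma(\dim(\A)) = \dim(\sigma(\A))$. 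Applying Proposition~\ref{ratio dim/ FPdim} together with the positivity \eqref{dim not 0} to the fusion category $\sigma(\A)$ yields $0 < \dim(\sigma(\A)) \leq \FPdim(\sigma(\A)) = \FPdim(\A)$, so that $\sigma(\beta) \in (0,1]$ for every $\sigma \in \Gal(\bar{\mathbb{Q}}/\mathbb{Q})$.

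Thus $\beta$ is a nonzero algebraic integer all of whose Galois conjugates have absolute value at most $1$. By Kronecker's theorem $\beta$ must be a root of unity, and since $\beta$ is also a positive real number the only possibility is $\beta = 1$. Therefore $\dim(\A) = \FPdim(\A)$, i.e., $\A$ is pseudo-unitary.

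The main obstacle is the Galois-equivariance claim $\sigma(\dim(\A)) = \dim(\sigma(\A))$: although the definition of $|X|^2$ passes through an auxiliary square-root choice, one must verify that the resulting scalar depends only on the (Galois-invariant) structure constants of $\A$, so that applying $\sigma$ to those constants produces exactly the corresponding invariant of $\sigma(\A)$. Everything else in the argument is a direct repackaging of results already established earlier in the paper.
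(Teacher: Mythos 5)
Your proposal is correct and follows essentially the same route as the paper: both apply Proposition~\ref{ratio dim/ FPdim} to each Galois conjugate category $\sigma(\A)$, use $\FPdim(\sigma(\A))=\FPdim(\A)\in\mathbb{Z}$, and then force the ratio to be $1$ by a norm argument over $\Gal(\bar{\mathbb{Q}}/\mathbb{Q})$ (your appeal to Kronecker's theorem is just a packaged form of the paper's ``the product of all conjugates is a positive rational algebraic integer $\le 1$, hence $1$''). Your explicit verification that $\sigma(\dim(\A))=\dim(\sigma(\A))$ is a point the paper leaves implicit, and it is handled correctly.
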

\begin{proof}
It is shown in Proposition~\ref{ratio dim/ FPdim} that the ratio $\frac{\dim(\A)}{\FPdim(\A)}$
is an algebraic integer $\leq 1$. Let $D:= \dim(\A)$, let $D_1=D,D_2,\dots, D_N$
be algebraic conjugates of $D$, and let $g_1,\dotsm g_N$  be the elements of $Gal(\overline{\mathbb{Q}}/\mathbb{Q})$
such that $D_i = g_i(D)$. Applying Proposition~\ref{ratio dim/ FPdim} to the category $g_i(\A)$
we see that $\frac{\dim(g_i(\A))}{\FPdim(\A)}$ is an algebraic integer $\leq 1$. Therefore,
\[
\prod_{i=1}^N\,   \frac{\dim(g_i(\A))}{\FPdim(\A)}
\]
is an algebraic integer $\leq 1$. But this product is a rational number. It must be  equal to  $1$
and so all factors are equal to $1$. Thus,  ${\dim(\A)}= {\FPdim(\A)}$, as desired.
\end{proof}

\begin{corollary}
\label{canonical spherical structure in integer case}
Let $\A$ be a fusion category  such that $\FPdim(\A)$ is an integer.  Then $\A$ 
admits a unique spherical structure $a_X: X\xrightarrow{\sim} X^{**}$
with respect to which $d_X = \FPdim(X)$ for every simple object $X$.
\end{corollary}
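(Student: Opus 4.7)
The plan is to derive the corollary as an immediate consequence of the two preceding results, Proposition~\ref{canonical spherical structure} and Proposition~\ref{integer --> ps un}. Since $\FPdim(\A) \in \mathbb{Z}$, Proposition~\ref{integer --> ps un} asserts that $\A$ is pseudo-unitary, meaning that $\dim(\A) = \FPdim(\A)$. Once this input is in hand, Proposition~\ref{canonical spherical structure} directly supplies a spherical structure $a_X : X \xrightarrow{\sim} X^{**}$ whose associated dimensions satisfy $d_X = \FPdim(X)$ for every simple object $X$, together with its uniqueness.

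First I would invoke Proposition~\ref{integer --> ps un} to reduce to the pseudo-unitary case; its proof is already established via the Galois-conjugation argument showing that the product over Galois conjugates of the algebraic integers $\dim(g_i(\A))/\FPdim(\A)$ is a positive rational $\leq 1$ and hence equals $1$. Second, I would apply Proposition~\ref{canonical spherical structure}, which gives both existence and uniqueness: existence is by twisting a chosen square root of the canonical tensor isomorphism $g_X: X \xrightarrow{\sim} X^{****}$ by the automorphism $\sigma_X = \frac{|d_X|^2}{d_X^2}\id_X$, and uniqueness is forced by the requirement $d_X = \FPdim(X) > 0$ together with the fact that any two spherical structures differ by a tensor automorphism of the identity functor that is determined on simples by the ratio of the corresponding dimensions.

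There really is no substantive obstacle here, since the corollary is essentially a packaging of the two preceding propositions. The only point worth being careful about is that uniqueness in Proposition~\ref{canonical spherical structure} is stated for pseudo-unitary categories, and the hypothesis $\FPdim(\A) \in \mathbb{Z}$ of the corollary is precisely what is needed (via Proposition~\ref{integer --> ps un}) to place $\A$ in that setting; once this is observed the proof concludes in one line.
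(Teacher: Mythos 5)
Your proposal is correct and matches the paper's own proof, which simply cites Proposition~\ref{integer --> ps un} to obtain pseudo-unitarity and then applies Proposition~\ref{canonical spherical structure}. Nothing further is needed.
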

\begin{proof}
This follows from Propositions~\ref{canonical spherical structure} and \ref{integer --> ps un}.
\end{proof}

\begin{proposition}
\label{graded dims}
Let $\A$ be a fusion category  such that $\FPdim(\A)$ is an integer. 
\begin{enumerate}
\item[(i)] For any $X\in\O(\A)$ we have $\FPdim(X) = \sqrt{n_X}$ for some
integer $n_X$. 
\item[(ii)]  The  map $\deg : \O(\A) \to \mathbb{Q}_{>0}^\times /  (\mathbb{Q}_{>0}^\times )^2$
that takes  $X\in \O(\A)$ to the image of $\FPdim(X)$ in  $\mathbb{Q}_{>0}^\times /  (\mathbb{Q}_{>0}^\times )^2$ 
is a grading of $\A$.
\end{enumerate}
\end{proposition}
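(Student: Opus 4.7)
The plan is to deduce both statements simultaneously from a Galois-theoretic uniqueness argument applied to the canonical spherical structure. By Proposition~\ref{integer --> ps un} the category $\A$ is pseudo-unitary, and Corollary~\ref{canonical spherical structure in integer case} supplies a canonical spherical structure $\psi$ with $d_X = \FPdim(X)$ for every $X\in\O(\A)$. Using Ocneanu rigidity (the Remark after Theorem~\ref{thm rigidity}) fix an algebraic model of $\A$ over $\overline{\mathbb{Q}}$. For any $\sigma \in \Gal(\overline{\mathbb{Q}}/\mathbb{Q})$ the Galois conjugate $\sigma(\A)$ has the same Grothendieck ring as $\A$, hence the same Frobenius--Perron dimensions; in particular $\FPdim(\sigma(\A)) = \FPdim(\A)\in\BZ$, so $\sigma(\A)$ is also pseudo-unitary and carries its own canonical spherical structure $\psi'$ with $d'_X = \FPdim(X)$. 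On the other hand $\sigma$ transports $\psi$ to a spherical structure $\sigma(\psi)$ on $\sigma(\A)$ whose dimensions are $\sigma(\FPdim(X))$, which a priori need not be positive real.

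The crucial observation is that any two spherical structures on a fusion category differ by a tensor automorphism of the identity functor, and such an automorphism is a family of scalars $\tau(X)\in\overline{\mathbb{Q}}^{\times}$, indexed by simple $X$, satisfying
\[
\tau(Z) = \tau(X)\tau(Y) \qquad \text{whenever } N_{XY}^Z \neq 0.
\]
Comparing $\psi'$ with $\sigma(\psi)$ gives $\tau(X) = \FPdim(X)/\sigma(\FPdim(X))$, so the multiplicativity constraint rewrites as
\[
\sigma\!\left(\frac{\FPdim(X)\FPdim(Y)}{\FPdim(Z)}\right) = \frac{\FPdim(X)\FPdim(Y)}{\FPdim(Z)} \qquad \text{whenever } N_{XY}^Z \neq 0,
\]
valid for every $\sigma \in \Gal(\overline{\mathbb{Q}}/\mathbb{Q})$. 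Hence the ratio $\FPdim(X)\FPdim(Y)/\FPdim(Z)$ is Galois-invariant, and therefore rational, for every fusion triple with $N_{XY}^Z\neq 0$.

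Part (i) will then follow by specialization: take $Y = X^{*}$ and $Z = \be$ (which occurs in $X\otimes X^{*}$ with multiplicity one), giving $\FPdim(X)^{2}\in\mathbb{Q}$; combined with the fact that $\FPdim(X)^{2}$ is a positive algebraic integer this forces $n_X := \FPdim(X)^{2}\in\BZ_{>0}$. For part (ii) set $\deg(X) = n_X\cdot(\mathbb{Q}^{\times}_{>0})^{2}$ in $\mathbb{Q}^{\times}_{>0}/(\mathbb{Q}^{\times}_{>0})^{2}$; the required grading identity $\deg(Z) = \deg(X)\deg(Y)$ for $N_{XY}^Z\neq 0$ amounts to $n_Xn_Y/n_Z\in(\mathbb{Q}^{\times}_{>0})^{2}$, which is exactly the rationality of the positive real number $\FPdim(X)\FPdim(Y)/\FPdim(Z) = \sqrt{n_Xn_Y/n_Z}$ established in the previous step.

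The main obstacle is the careful formal setup of ``applying $\sigma$ to a fusion category and to a spherical structure'' and verifying that $\sigma(\psi)$ really is a spherical structure on $\sigma(\A)$ distinct in general from the canonical one; the whole argument hinges on the torsor structure of pivotal structures over $\Aut^{\otimes}(\id_{\sigma(\A)})$ and on the fact that tensor automorphisms of the identity are precisely multiplicative functions on the Grothendieck ring. Once this bookkeeping is in place the rest is elementary algebra.
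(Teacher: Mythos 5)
Your proof is correct, but it takes a genuinely different (and much heavier) route than the paper does. The torsor argument is sound: the canonical spherical structure on $\sigma(\A)$ and the transported structure $\sigma(\psi)$ differ by a tensor automorphism $\tau$ of the identity functor, $\tau(X)=\FPdim(X)/\sigma(\FPdim(X))$ is multiplicative on fusion triples, and Galois invariance of $\FPdim(X)\FPdim(Y)/\FPdim(Z)$ follows; specializing to $Y=X^*$, $Z=\be$ gives (i), and (ii) is then immediate. (In fact you get slightly more for free: sphericality forces $\tau(X)=\tau(X^*)$ while multiplicativity forces $\tau(X)\tau(X^*)=\tau(\be)=1$, so $\tau$ takes values in $\{\pm1\}$ and $\sigma(\FPdim(X))=\pm\FPdim(X)$ directly.) The paper's own proof avoids spherical structures and Galois conjugation of categories entirely: it uses the elementary fact that a subobject of an object with integer Frobenius--Perron dimension again has integer Frobenius--Perron dimension (a one-line estimate on the Grothendieck ring, since every Galois conjugate of $\FPdim$ of a simple object is bounded in absolute value by $\FPdim$ itself), applied to $Y=\bigoplus_{X\in\O(\A)}X\ot X^*$, whose dimension is $\FPdim(\A)\in\BZ$; this yields $\FPdim(X)^2=\FPdim(X\ot X^*)\in\BZ$ at once, and (ii) then follows from multiplicativity of $\FPdim$ by a short argument in quadratic fields (cf.\ [GN, Theorem 3.10]). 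Your approach is not circular --- Propositions~\ref{integer --> ps un} and \ref{canonical spherical structure} do not depend on Proposition~\ref{graded dims} --- but it invokes Ocneanu rigidity, pseudo-unitarity of every Galois conjugate, and the uniqueness of the canonical spherical structure, i.e., substantial machinery and the bookkeeping you flag yourself, for a statement that the paper settles at the level of the Grothendieck ring in two lines.
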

\begin{proof}
Note that if $Y$ is an object  of $\A$ such  that $\FPdim(Y) \in \mathbb{Z}$ then  $\FPdim(Y_0) \in \mathbb{Z}$
for any subobject $Y_0$ of $Y$. Take $ Y =\oplus_{X\in \O(\A)}\, X\ot X^*$. We have  $\FPdim(Y) = \FPdim(\A)\in \mathbb{Z}$.
But $X\ot X^*$  is a subobject of $Y$ for every $X\in \O(\C)$. Hence, $\FPdim(X)^2 = \FPdim(X \ot X^*) \in \mathbb{Z}$.
This proves the first part. The second part is clear (cf. \cite[Theorem 3.10]{GN}).
\end{proof}

\begin{corollary}
Let $\A$ be a fusion category such that $\FPdim(\A)$ is odd. Then $\FPdim(X) \in \mathbb{Z}$ for any $X \in \O(\C)$. 
\end{corollary}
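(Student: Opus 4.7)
The plan is to exploit the grading constructed in Proposition~\ref{graded dims}(ii) together with the Lagrange-type constraint of Theorem~\ref{thm Lagrange}. Concretely, let $G \subset \mathbb{Q}_{>0}^\times /(\mathbb{Q}_{>0}^\times)^2$ be the subgroup generated by the image of the degree map $\deg:\O(\A) \to \mathbb{Q}_{>0}^\times/(\mathbb{Q}_{>0}^\times)^2$; then by construction $\A$ is a faithful $G$-extension of its trivial component $\A_e$.

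First I would observe that $G$ is a subgroup of an elementary abelian $2$-group, so $|G|$ is a power of $2$. Next, by Proposition~\ref{graded dims}(i) every simple object in $\A_e$ has integer Frobenius--Perron dimension (its degree is trivial, so $\FPdim(X) \in \mathbb{Q}$; combined with $\FPdim(X) = \sqrt{n_X}$, this forces $\FPdim(X) \in \mathbb{Z}$), hence $\FPdim(\A_e) = \sum_{X \in \O(\A_e)} \FPdim(X)^2$ is a positive integer. Applying the remark after Theorem~\ref{thm Lagrange} to the faithful $G$-extension $\A \supset \A_e$ gives
\[
\FPdim(\A) = |G|\, \FPdim(\A_e),
\]
so $|G|$ divides the integer $\FPdim(\A)$.

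Now I invoke the hypothesis: since $\FPdim(\A)$ is odd, $|G|$ must be odd. Combined with the fact that $|G|$ is a power of $2$, this forces $|G| = 1$, i.e.\ the grading is trivial and every simple $X \in \O(\A)$ has $\deg(X) = e$. By the same rationality-plus-integrality argument used above for $\A_e$, we conclude that $\FPdim(X) \in \mathbb{Z}$ for every simple $X$, as required.

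There is no real obstacle here beyond assembling the pieces already in place: the content of the argument is the clash between the $2$-torsion of the universal target $\mathbb{Q}_{>0}^\times/(\mathbb{Q}_{>0}^\times)^2$ and the odd divisibility forced by Lagrange. The only delicate point worth flagging is verifying that $\FPdim(\A_e)$ is an integer (needed to pass from ``$|G|\,\FPdim(\A_e)$ is an integer'' to ``$|G|$ divides $\FPdim(\A)$''), and this is handled by applying Proposition~\ref{graded dims}(i) inside the subcategory $\A_e$.
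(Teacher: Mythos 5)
Your argument is correct and is essentially the paper's own proof, which argues in contrapositive form: an irrational-dimensional simple object would force a non-trivial grading by a $2$-group via Proposition~\ref{graded dims}, making $\FPdim(\A)$ even. Your write-up just makes explicit the two steps the paper leaves implicit, namely that the support of the grading is a $2$-group of order dividing $\FPdim(\A)$ and that $\FPdim(\A_e)$ is an integer.
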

\begin{proof}
If $\A$ contains objects of irrational dimension then by Proposition~\ref{graded dims} 
it has a non-trivial grading by a $2$-group and so $\FPdim(\A)$ is even.
\end{proof}

\begin{remark}
\label{Aad dims}
Let $\A_{ad}$ be the adjoint subcategory of $\A$, see Definition~\ref{def:adjoint}.
The proof of Proposition~\ref{graded dims} shows that if $\FPdim(\A)\in \mathbb{Z}$ then
$\A_{ad}$ is integral.
\end{remark}

%%%%%%%%%%%%%%%%%%%%%%%%%%%%%%%%%%%%%%%%%%%%%%%
\subsection{Fusion categories of prime power Frobenius-Perron dimension} 
\label{Sect prime power FP}

Let $p$ be a prime number.

The following result is proved in \cite[Theorem 8.28]{ENO1}. It is a generalization
of the result of Masuoka \cite{Ma} in the theory of semisimple Hopf algebras.

\begin{proposition}
Let $\A$ be a fusion category  such that  $\FPdim(\A) =p^n$ for $n \geq 1$. Then
$\A$ has a faithful grading by $\mathbb{Z}/p\mathbb{Z}$.
\end{proposition}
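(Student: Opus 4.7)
It suffices to prove that the universal grading group $U(\A)$ is nontrivial. By Theorem~\ref{thm Lagrange} applied to $\A_{ad}\subset\A$ we have $\FPdim(\A_{ad})=p^{m}$ with $0\le m\le n$, so $|U(\A)|=\FPdim(\A)/\FPdim(\A_{ad})=p^{n-m}$. Once $U(\A)\ne 1$ is known, any surjection $U(\A)\twoheadrightarrow\BZ/p\BZ$ (which exists because a nontrivial $p$-group has nontrivial abelianization) pulled back along the universal grading yields a faithful $\BZ/p\BZ$-grading of $\A$. I will exhibit a nontrivial element of $U(\A)$ by producing a nontrivial invertible object of the form $(\be,\gamma)\in\Z(\A)$.

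\textbf{Dimensions in $\Z(\A)$ via the class equation.} By Proposition~\ref{integer --> ps un}, $\A$ is pseudo-unitary, and so is $\Z(\A)$ since $\FPdim(\Z(\A))=p^{2n}\in\BZ$; thus $\dim(\A)=p^n$ and $d_Z=\FPdim(Z)$ for every simple $Z\in\Z(\A)$. By Corollary~\ref{centerfactorizable} and Proposition~\ref{Muger1prime}, $\Z(\A)$ is modular, so Proposition~\ref{EGdiv} gives $p^{2n}/d_Z^2\in\BZ$, and Proposition~\ref{graded dims}(i) gives $d_Z^2\in\BZ$. Hence $d_Z=p^{j_Z/2}$ with $0\le j_Z\le 2n$. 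Theorem~\ref{class eqn theorem} then reads
\[
p^n \;=\;\sum_{Z\in\O(\Z(\A))}\,[F(Z):\be]\,p^{j_Z/2}.
\]
Since $\sqrt p$ is irrational and all multiplicities are non-negative integers, the contributions with $j_Z$ odd must each vanish. Thus every $Z$ that actually appears has $d_Z=p^{l_Z}$ with $l_Z\in\BZ_{\ge 0}$, and
$p^n=\sum_Z [F(Z):\be]\,p^{l_Z}$.

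\textbf{Reduction mod $p$.} Reducing the last equality modulo $p$ (using $n\ge 1$) gives $\sum_{Z:\,d_Z=1}[F(Z):\be]\equiv 0\pmod p$. In the pseudo-unitary category $\Z(\A)$ the condition $d_Z=1$ means $Z$ is invertible; then $\FPdim(F(Z))=1$ together with $[F(Z):\be]\ge 1$ forces $F(Z)\cong\be$ and $[F(Z):\be]=1$. Isolating the contribution of $Z=\be$, the number $K$ of \emph{nontrivial} invertible $Z\in\Z(\A)$ with $F(Z)\cong\be$ satisfies $1+K\equiv 0\pmod p$, so $K\ge p-1\ge 1$.

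\textbf{From $(\be,\gamma)$ to $U(\A)\ne 1$, and main obstacle.} Pick a nontrivial invertible $(\be,\gamma)\in\Z(\A)$. Composing $\gamma_V$ with the canonical unit isomorphisms turns $\gamma$ into a tensor automorphism $\eta\in\Aut_\ot(\id_\A)$; for simple $X$ write $\eta_X=\lambda_X\id_X$. The tensor property forces $\lambda_{X^*}=\lambda_X^{-1}$ (via $\lambda_X\lambda_{X^*}=\lambda_\be=1$) and $\lambda_W=\lambda_X\lambda_{X^*}=1$ for every simple summand $W$ of $X\ot X^*$. Hence $\lambda$ is trivial on $\A_{ad}$, so it descends to a character of $U(\A)$; since $\eta$ is nontrivial this character is nontrivial, giving $U(\A)\ne 1$ as required. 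The conceptual heart of the argument, and the step I expect to need the most care, is the combined use of modularity of $\Z(\A)$ and the Class Equation to turn the dimension bound $\FPdim(\A)=p^n$ into a congruence modulo $p$ on the multiplicities of invertible central objects over $\be$, which are precisely the data controlling $U(\A)$.
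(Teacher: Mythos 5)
Your proof is correct and follows essentially the same route as the paper's: modularity of $\Z(\A)$, the Class Equation with all relevant $d_Z$ forced to be powers of $p$, a count modulo $p$ producing a nontrivial invertible object $(\be,\gamma)\in\Z(\A)$, and the resulting tensor automorphism of $\id_\A$ yielding a nontrivial character of $U(\A)$. The only (harmless) variations are that you establish integrality of the $d_Z$ via Proposition~\ref{graded dims} and irrationality of $\sqrt{p}$ rather than via $F(Z)\in\A_{ad}$ and Remark~\ref{Aad dims}, and that you spell out the passage from invertible central objects over $\be$ to characters of $U(\A)$ which the paper only asserts.
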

\begin{proof}
By Corollary~\ref{canonical spherical structure in integer case}, $\A$ has a spherical structure such that 
$d_X = \FPdim(X)$ for all objects $X$ of $\A$. In particular, $\Z(\A)$ is a modular category.
Let $F:\Z(\A)\to \A$ and $I: \A \to \Z(\A)$ be the forgetful functor and its right adjoint.

Let $Z\in \O(\Z(\A))$ be a simple subobject of $I(\be)$.  Since $F(Z)\in \A_{ad}$ we conclude 
 by Remark~\ref{Aad dims} that  $\FPdim(Z)$ is an integer. By Proposition~\ref{EGdiv},
 $\FPdim(Z)$ is a power of $p$. Therefore, the right hand side of the Class Equation \eqref{class equation} 
 must have at least $p$ summands equal to $1$.  These summands correspond to distinct
 invertible simple subobjects of $I(\be)$.  Hence,
 the Abelian group  $G$ of such objects  is non-trivial.  Elements of  $G$
 are in bijection with tensor automorphisms of $\id_\A$.  
 Consequently,  $\A$ has a faithful grading by $\widehat{G}$. 
 Since $G$ is a $p$-group, it has a quotient isomorphic to $\mathbb{Z}/p\mathbb{Z}$ that provides
 a desired grading.
\end{proof}

\begin{corollary}
\label{pn --> nilpotent}
A fusion category of prime power Frobenius-Perron dimension is nilpotent.
\end{corollary}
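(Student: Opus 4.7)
The plan is to proceed by induction on $n$, where $\FPdim(\A) = p^n$. The base case $n = 0$ is immediate: the only fusion category of Frobenius-Perron dimension $1$ is $\Vec$ itself, which is nilpotent (of class $0$) by the very definition of nilpotency (Definition~\ref{def:nilpotent}).

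For the inductive step, assume the result holds for all fusion categories of Frobenius-Perron dimension $p^m$ with $m < n$, and let $\A$ satisfy $\FPdim(\A) = p^n$ with $n \geq 1$. Applying the preceding proposition to $\A$, we obtain a faithful grading
\[
\A = \bigoplus_{g \in \mathbb{Z}/p\mathbb{Z}}\, \A_g,
\]
so that $\A$ is a $\mathbb{Z}/p\mathbb{Z}$-extension of its trivial component $\A_e$ in the sense of Definition~\ref{def:extension}. By the Lagrange-type formula for faithful $G$-extensions recorded after Theorem~\ref{thm Lagrange}, we have $\FPdim(\A_e) = \FPdim(\A)/p = p^{n-1}$, so $\A_e$ falls under the inductive hypothesis and is nilpotent.

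By Definition~\ref{def:nilpotent} applied to $\A_e$, there is a sequence
\[
\Vec = \A_0 \subset \A_1 \subset \cdots \subset \A_{k} = \A_e,
\]
in which each $\A_i$ is a $G_i$-extension of $\A_{i-1}$. Appending $\A$ itself at the end of this chain, together with the group $G_{k+1} := \mathbb{Z}/p\mathbb{Z}$ furnished by the proposition, yields a nilpotent filtration of $\A$, so $\A$ is nilpotent. I do not foresee any serious obstacle here: all the heavy lifting (the existence of a faithful $\mathbb{Z}/p\mathbb{Z}$-grading on any prime-power-dimensional fusion category, which rests on the Class Equation and the integrality of dimensions in the adjoint subcategory) has already been done in the preceding proposition; the corollary itself is merely an inductive packaging of that fact.
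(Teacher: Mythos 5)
Your proof is correct and is precisely the argument the paper intends (the paper leaves the corollary as an immediate consequence of the preceding proposition): induct on $n$, use the faithful $\mathbb{Z}/p\mathbb{Z}$-grading to exhibit $\A$ as a $\mathbb{Z}/p\mathbb{Z}$-extension of $\A_e$ with $\FPdim(\A_e)=p^{n-1}$, and append this step to the nilpotent filtration of $\A_e$.
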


\begin{corollary}
\label{Kac-Zhu}
Let $\A$ be a fusion category  such that  $\FPdim(\A) =p$. Then $\A$ is equivalent to $\Vec_{\mathbb{Z}/p\mathbb{Z}}^\omega$ 
for some $\omega \in Z^3(\mathbb{Z}/p\mathbb{Z},\, \kk^\times)$.
\end{corollary}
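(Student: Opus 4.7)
The plan is to combine the faithful grading from the previous proposition with the Lagrange theorem (Theorem~\ref{thm Lagrange}) to force $\A$ to be pointed, and then to invoke the classification of pointed fusion categories (Example~\ref{ex VecGw}).

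First, since $\FPdim(\A)=p=p^1$ is a prime power with $n=1\geq 1$, the previous proposition applies and gives a faithful grading
\[
\A=\bigoplus_{g\in \mathbb{Z}/p\mathbb{Z}}\A_g.
\]
By Theorem~\ref{thm Lagrange} applied to the trivial component (or its refinement for faithful extensions stated immediately after),
\[
\FPdim(\A_e)=\frac{\FPdim(\A)}{p}=1.
\]
Since $\A_e$ is a fusion subcategory whose Frobenius--Perron dimension equals $1$, it contains the unit object with $\FPdim(\be)=1$ and no other simple object (any additional simple $X$ would contribute $\FPdim(X)^2\geq 1$ to $\FPdim(\A_e)$ by \eqref{FPdim def}). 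Hence $\A_e\cong \Vec$.

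Next, for each nontrivial $g\in\mathbb{Z}/p\mathbb{Z}$ the faithfulness of the grading guarantees $\A_g\neq 0$; moreover $\FPdim(\A_g)=\FPdim(\A_e)=1$ (the components of a faithful $G$-grading all have the same Frobenius--Perron dimension, as follows from tensoring with any simple object of degree $g$ and using that tensor multiplication is a bijection between $\A_e$ and $\A_g$ up to isomorphism). Thus each $\A_g$ has exactly one simple object $X_g$ with $\FPdim(X_g)=1$, so $X_g$ is invertible. Consequently $\A$ has exactly $p$ simple objects, all invertible, meaning $\A$ is pointed in the sense of Definition~\ref{def:pointed}.

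By the general description of pointed fusion categories recalled in Example~\ref{ex VecGw}, any pointed fusion category is equivalent to $\Vec_G^\omega$ where $G$ is the group of isomorphism classes of invertible simple objects under tensor product and $\omega\in Z^3(G,\kk^\times)$. In our case $|G|=p$, hence $G\cong \mathbb{Z}/p\mathbb{Z}$, which yields
\[
\A\cong \Vec_{\mathbb{Z}/p\mathbb{Z}}^\omega
\]
for some $\omega\in Z^3(\mathbb{Z}/p\mathbb{Z},\kk^\times)$. The only point that requires mild care is the equality $\FPdim(\A_g)=\FPdim(\A_e)$ for faithful gradings; everything else is a direct application of results already stated in the excerpt.
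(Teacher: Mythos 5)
Your proof is correct and follows exactly the derivation the paper intends for this corollary: the faithful $\mathbb{Z}/p\mathbb{Z}$-grading from the preceding proposition forces $\FPdim(\A_e)=1$, hence every graded component consists of a single invertible object, so $\A$ is pointed with group of invertibles of order $p$, and Example~\ref{ex VecGw} finishes the argument. The one step you flag as needing care --- that all components of a faithful grading have equal Frobenius--Perron dimension --- is indeed a standard fact, but your justification via ``tensor multiplication is a bijection up to isomorphism'' is imprecise, and here it can be avoided entirely: since $\FPdim(X)\geq 1$ for every nonzero object, each of the $p$ nonzero components satisfies $\FPdim(\A_g)\geq 1$, and these quantities sum to $\FPdim(\A)=p$, so each equals $1$.
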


\begin{remark}
Corollary~\ref{Kac-Zhu} is a generalization of the classical result of Kac \cite{K} and Zhu \cite{Zhu} saying that
a Hopf algebra of prime dimension is isomorphic to the  group algebra of $\mathbb{Z}/p\mathbb{Z}$.
\end{remark} 

Recall from Definition~\ref{def:group-theoretical} 
that a fusion category $\A$ is  group-theoretical if it is categorically Morita equivalent to
a pointed fusion category. 

Let $\C$ be a braided fusion category. For any fusion subcategory $\L \subset \C$ 
let  $\L^{co}$ denote  fusion subcategory of $\C$ generated by simple objects $X\in \O(\C)$
such that  $X \ot X^* \in \L$. 

\begin{remark}
The superscript ${co}$  stands for ``commutator". The reason is that for $\C=\Rep(G)$
and $\L = \Rep(G/N)$, where $N$ is a normal subgroup of  $G$, one has $\L^{co}= \Rep(G/[G,N])$,
where $[G,N]$ is the commutator subgroup. 
\end{remark}

Theorem~\ref{main result of DGNO1} is proved in \cite{DGNO1}. Below we sketch
its proof. The reader is referred to \cite{DGNO1} for full details.

\begin{lemma}
\label{max symmetric}
Let $\C$ be a nilpotent braided fusion category.  There exists a symmetric
subcategory $\K \subset \C$ such that $(\K')_{ad} \subset \K$.
\end{lemma}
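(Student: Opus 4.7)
The plan is to proceed by induction on the nilpotency class $n$ of $\C$.

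\textbf{Base cases.} If $n = 0$ then $\C = \Vec$ and $\K := \Vec$ works trivially. If $n = 1$ then $\C$ is pointed, so $\C_{ad} = \Vec$; I would set $\K := \C'$, the Müger center of $\C$. Then $\K$ is symmetric by definition, $\K' = \C'' = \C$, and so $(\K')_{ad} = \C_{ad} = \Vec \subset \K$.

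\textbf{Inductive step.} For $n \geq 2$ the adjoint subcategory $\C_{ad}$ is a braided nilpotent fusion category of class at most $n-1$. Applying the inductive hypothesis to $\C_{ad}$ produces a symmetric subcategory $\K_0 \subset \C_{ad}$ satisfying $((\K_0)'_{\C_{ad}})_{ad} \subset \K_0$, where $'_{\C_{ad}}$ denotes the centralizer computed inside $\C_{ad}$. I would then take $\K := \K_0$, viewed as a subcategory of $\C$. Symmetry of $\K$ in $\C$ coincides with symmetry of $\K_0$ in $\C_{ad}$. For a simple $Y \in \K'_\C$, the object $Y \ot Y^*$ has trivial degree in the universal grading $U(\C)$, hence lies in $\C_{ad}$, and it centralizes $\K_0$ because $Y$ does and $\K'_\C$ is closed under tensor products and duals; thus $Y \ot Y^* \in (\K_0)'_{\C_{ad}}$.

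\textbf{The main obstacle} is closing the gap between \emph{centralizer} and \emph{adjoint of centralizer}: the inductive hypothesis delivers $Y \ot Y^* \in \K_0$ only when $Y$ itself lies in $\C_{ad}$. For $Y$ of nontrivial universal degree, one must argue that each simple summand of the decomposition $Y \ot Y^* = \bigoplus_i Z_i$ in $\C_{ad}$ is already captured by $\K_0$, not merely by the larger subcategory $(\K_0)'_{\C_{ad}}$. I expect to resolve this by strengthening the inductive statement so that $\K_0$ is taken \emph{maximal} among symmetric subcategories of $\C_{ad}$ with $((\K_0)'_{\C_{ad}})_{ad} \subset \K_0$ — equivalently, a Lagrangian subcategory of $(\K_0)'_{\C_{ad}}$. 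Maximality then forces any $Z_i$ centralizing $\K_0$ into $\K_0$ itself. Verifying that this stronger hypothesis can be propagated along the upper central series $\C \supset \C_{ad} \supset (\C_{ad})_{ad} \supset \cdots$, and exploiting the fact that nilpotency guarantees termination of this series at $\Vec$, is the technical core of the argument.
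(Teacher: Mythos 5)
Your induction is set up quite differently from the paper's argument, and as written it does not close. The paper's proof is a one-step maximality argument inside $\C$ itself: take $\K$ maximal among \emph{all} symmetric subcategories of $\C$; if $(\K')_{ad}\not\subset\K$, then the fusion subcategory generated by $\K$ and $\K^{co}\cap(\K^{co})'$ (where $\K^{co}$ is generated by the simple $X$ with $X\ot X^*\in\K$) is shown to be symmetric and to contain $\K$ strictly, contradicting maximality; nilpotency is what makes this enlargement strict. That enlargement/absorption lemma is the key idea, and it is precisely what your proposal lacks.

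The gap you flag yourself is fatal as the argument stands, and the repair you sketch does not work. The inductive hypothesis applied to $\C_{ad}$ controls $Z\ot Z^*$ for simple $Z\in(\K_0)'_{\C_{ad}}$, i.e.\ it gives $Z\in(\K_0)^{co}$; what you need is that the simple summands $Z_i$ of $Y\ot Y^*$ (for simple $Y\in\K'_{\C}$) lie in $\K_0$ itself, which is one level stronger, since $\K_0\subsetneq(\K_0)^{co}$ in general. Maximality of $\K_0$ does not force $Z_i\in\K_0$: to invoke it you would need the subcategory generated by $\K_0$ and $Z_i$ to be symmetric (and to satisfy the defining property again), and knowing only that $Z_i$ centralizes $\K_0$ gives neither --- $Z_i$ need not centralize itself or the other summands. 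The suggested equivalence with being ``Lagrangian in $(\K_0)'$'' is also unjustified: the condition $(\K')_{ad}\subset\K$ is strictly weaker than $\K=\K'$. Finally, your induction structurally commits you to producing $\K$ inside $\C_{ad}$, an extra constraint the statement does not impose and whose feasibility you never establish; the natural witnesses (e.g.\ a Lagrangian subcategory $\Rep(G)\subset\Z(\Vec_G)$, or a maximal symmetric subcategory of a symmetric non-pointed $\C$) typically do not lie in $\C_{ad}$. To salvage this route you would still have to prove an absorption statement of the kind the paper uses, at which point the induction on nilpotency class becomes superfluous.
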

\begin{proof}
Let $\K$ be a symmetric subcategory  of $\C$. If the condition $(\K')_{ad} \subset \K$ 
is not satisfied then  fusion subcategory $\E\subset \C $ generated by $\K$ and
$\K^{co}\cap (\K^{co})'$ is symmetric and $\K \subsetneq \E$.  Thus, any maximal
symmetric subcategory of $\C$ satisfies the condition of the Lemma. 
\end{proof}

\begin{theorem}
\label{main result of DGNO1}
Let $\A$ be a fusion category such that $\FPdim(X)\in \mathbb{Z}$ for all $X \in O(\A)$
and $\FPdim(\A)= p^n$ for some prime $p$. Then $\A$ is group-theoretical.
\end{theorem}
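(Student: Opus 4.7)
My plan is to apply Corollary~\ref{GT criterion}: it suffices to produce a Tannakian subcategory $\E$ of $\Z(\A)$ with $\FPdim(\E)=\FPdim(\A)=p^n$. Since $\Z(\A)$ is non-degenerate and $\FPdim(\Z(\A))=p^{2n}$, any such $\E$ is automatically Lagrangian: $\E=\E'$ and $\FPdim(\E)^2=\FPdim(\Z(\A))$. So the entire task reduces to building a Lagrangian Tannakian subcategory of $\Z(\A)$.

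First I assemble the structural facts about the center. Because $\A$ is integral, Proposition~\ref{integer --> ps un} and Corollary~\ref{canonical spherical structure in integer case} endow $\A$ with a canonical spherical structure, which induces one on $\Z(\A)$. Corollary~\ref{centerfactorizable} gives that $\Z(\A)$ is non-degenerate, hence modular by Proposition~\ref{Muger1prime}. Integrality of $\A$ passes to $\Z(\A)$ via Remark~\ref{integral Morita} (applied to the realization $\Z(\A)\cong(\A\bt\A^\op)^*_\A$), and then Corollary~\ref{pn --> nilpotent} gives nilpotency. Thus $\Z(\A)$ is a nilpotent integral modular category of Frobenius-Perron dimension $p^{2n}$.

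Next I apply Lemma~\ref{max symmetric} to $\Z(\A)$, obtaining a symmetric fusion subcategory $\K\subset\Z(\A)$ with $(\K')_{ad}\subset\K$; by Deligne's theorem, $\K\cong\Rep(G,z)$ for a finite $p$-group $G=\pi_0(\K)$ and a central element $z$ with $z^2=1$. The crucial claim is that $\K$ can be arranged to be Tannakian and Lagrangian, so one may take $\E:=\K$. For Lagrangian-ness ($\FPdim(\K)=p^n$): non-degeneracy gives $\K''=\K$ and $\FPdim(\K)\FPdim(\K')=p^{2n}$, so the Mueger center of $\K'$ equals $\K'\cap(\K')'=\K'\cap\K=\K$. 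Assuming $\K$ is Tannakian, I de-equivariantize $\K'$ by $G$: the quotient $(\K')_G$ is a non-degenerate braided fusion category (its Mueger center is killed), and its adjoint $((\K')_G)_{ad}$ is the image of $(\K')_{ad}\subset\K$ under de-equivariantization, which is trivial. Hence $(\K')_G$ is pointed and non-degenerate of dimension $p^{2n}/|G|^2$. Any non-trivial isotropic subgroup of $(\K')_G$ would, via the equivariantization $\K'=((\K')_G)^G$, pull back to a symmetric fusion subcategory of $\K'$ strictly larger than $\K$, contradicting the maximality built into the construction in Lemma~\ref{max symmetric}. The parity constraint $|G|^2 \mid p^{2n}$ combined with an iterated application of this enlargement (accounting for the rare possibility of an anisotropic non-degenerate pointed piece of small dimension) forces $(\K')_G=\Vec$, whence $\FPdim(\K)=p^n$ as needed.

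For Tannakian-ness ($z=1$): when $p$ is odd, $|G|=\FPdim(\K)$ is odd so no non-trivial central involution $z$ exists and $\K$ is automatically Tannakian. The main obstacle is the case $p=2$, where $\K$ could genuinely be of ``super'' type $\Rep(G,z)$ with $\sVec\subset\K$ and $z\ne 1$; then $\K$ itself is not Tannakian. Here one replaces $\K$ by its index-two Tannakian subcategory $\Rep(G/\langle z\rangle)$ (Remark~\ref{non-triv tannakian}) and uses the resulting $\mathbb{Z}/2$-graded structure on the centralizer to run a secondary argument that removes the $\sVec$ obstruction and still produces a Tannakian Lagrangian in $\Z(\A)$; this delicate step, handling the interaction between $\sVec$ and the universal grading of $\Z(\A)$, is the technical heart of the argument in \cite{DGNO1}. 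Once a Tannakian subcategory $\E\subset\Z(\A)$ with $\FPdim(\E)=\FPdim(\A)$ has been produced, Corollary~\ref{GT criterion} immediately yields that $\A$ is group-theoretical.
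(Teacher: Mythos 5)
Your overall strategy (produce a Lagrangian Tannakian subcategory of $\Z(\A)$ and invoke Corollary~\ref{GT criterion}) is the right target, and your preliminary reductions (pseudo-unitarity, non-degeneracy, integrality and nilpotency of $\Z(\A)$) are fine. But the central step — that the maximal symmetric subcategory $\K$ supplied by Lemma~\ref{max symmetric} can be ``arranged'' to be Lagrangian because ``iterated enlargement'' forces $(\K')_G=\Vec$ — has a genuine gap, and I do not believe it can be closed by the means you use. Your argument only invokes the properties \emph{integral, non-degenerate, nilpotent, of dimension $p^{2n}$} of $\Z(\A)$; it never uses that $\Z(\A)$ is actually a Drinfeld center. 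Yet there exist integral, nilpotent, non-degenerate braided categories of dimension $p^2$ with \emph{no} non-trivial symmetric subcategory at all: take $\C((\mathbb{Z}/5\mathbb{Z})^2,q)$ with the anisotropic form $q(x,y)=\zeta^{x^2+3y^2}$, whose associated bicharacter has no non-trivial isotropic vectors. For such a category your maximal $\K$ is $\Vec$, $(\K')_G=\C$ is a non-trivial anisotropic pointed modular category, and no enlargement is possible — so the conclusion $\FPdim(\K)=p^n$ simply fails. Hence any proof along your lines must input the ``hyperbolicity'' of $\Z(\A)$ coming from its being a center, and your maximality argument (which only rules out the existence of isotropic subgroups in $(\K')_G$, not anisotropy) does not do this. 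Your parenthetical about ``the rare possibility of an anisotropic piece'' is precisely the point where the proof breaks.

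This is exactly the difficulty the paper's proof is designed to sidestep: it does \emph{not} look for a Lagrangian subcategory inside $\Z(\A)$ directly, but passes to $\Z(\Z(\A))\cong\Z(\A)\bt\Z(\A)^{\rev}$ and builds the ``graph'' subcategory $\E=\bigoplus_{g\in G}\K'_g\bt(\K')^{\rev}_g$ out of the grading of $\K'$ by $\K'/\K$. The doubling makes the relevant form metabolic — the diagonal is automatically isotropic — so $\E'=\E$ holds without ever confronting anisotropy, and the required dimension count comes for free from \eqref{double centralizer}. (One then still has to convert ``$\Z(\A)$ group-theoretical'' back into ``$\A$ group-theoretical'', and to handle the $p=2$ super-Tannakian obstruction; you defer the latter wholesale to \cite{DGNO1}, which in a self-contained proof would also need to be supplied.) If you want to salvage your direct approach inside $\Z(\A)$, you would need an additional argument showing that the anisotropic quotient $(\K')_G$ must be trivial \emph{because} $\Z(\A)$ is a center — which is essentially equivalent to the doubling argument you are trying to avoid.
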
 
\begin{proof}
By Corollary~\ref{GT criterion} it suffices to show  that non-degenerate braided fusion 
category $\C := \Z(\Z(\A))\cong \Z(\A) \bt \Z(\A)^\rev$ contains a Tannakian subcategory 
$\E$ such that $\FPdim(\E) =\FPdim(\A)^2$.

This is achieved as follows. By Lemma~\ref{max symmetric},  $\Z(\A)$ 
contains a symmetric subcategory $\K$ such that $(\K')_{ad} \subset \K$. This means
that there is a faithful grading
\[
\K' = \bigoplus_{g\in G}\, \K'_g,\qquad \text{with } \quad \K'_e = \K.
\]
We view $(\K')^\rev$ as a fusion subcategory of $\Z(\A)^\rev$ and set 
\[
\E :=  \bigoplus_{g\in G}\, \K'_g \bt (\K')^\rev_g \subset \C.
\]
Then $\E$ is a symmetric subcategory such that $\E'=\E$ (so that $\FPdim(\E) =\FPdim(\A)^2$
by \eqref{double centralizer}).
In the case when $p$ is odd this subcategory $\E$ is automatically Tannakian.  When $p=2$ one can
show that existence of such $\E$ implies existence of a Tannakian subcategory of the same dimension.
\end{proof}

%%%%%%%%%%%%%%%%%%%%%%%%%%%%%%%%%%%%%%%%%%%%%%%
\subsection{Symmetric subcategories of integral braided fusion categories} 

We have seen in Theorem~\ref{ZC --> ext}  that non-trivial Tannakian subcategories of 
the center of a fusion category $\A$
are quite helpful in the study of the categorical Morita equivalence class of $\A$.  
In this Section we recall results of \cite[Section 7]{ENO2} that establish  existence
of Tannakian  subcategories of integral modular categories under certain assumptions 
on dimensions of their objects.

By Corollary~\ref{canonical spherical structure in integer case} there is  a canonical spherical
structure on $\A$ such that $d_X =\FPdim(X)$ for all objects $X$ in $\A$ and $\dim(\C)=\FPdim(\C)$.  
So we will simply talk
about {\em dimensions} of objects and fusion categories.

Let $\C$ be a non-degenerate integral braided fusion category with braiding
\[
c_{X,Y}:X \ot Y \xrightarrow{\sim} Y \ot X.
\] 

By Proposition~\ref{Muger1prime}  the category $\C$ is modular.
Let $S=\{s_{XY}\}$ denote the $S$-matrix of $\C$.
By \eqref{sij}   each entry $s_{XY}$ is a sum of $d_Xd_Y$ roots of unity, so it can be viewed as a complex
number. We have 
\begin{equation}
\label{sXY estimate}
|s_{XY}| \leq d_Xd_Y,
\end{equation}
where $|z|$ denotes the absolute value of $z\in\mathbb{C}$.

\begin{lemma}
\label{proj centr}
Let $X,\,Y$ be simple objects of $\C$.
The following conditions are equivalent:
\begin{enumerate}
\item[(i)] $|s_{XY}|  =  d_Xd_Y$,
\item[(ii)] $c_{Y,X} \circ c_{X,Y}$ equals $\id_{X\ot Y}$ times a scalar,
\item[(iii)] $X$ centralizes $Y\ot Y^*$,
\item[(iv)] $Y$ centralizes $X\ot X^*$,
\end{enumerate}
\end{lemma}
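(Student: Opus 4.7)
The plan is to prove the cycle of equivalences by establishing (i) $\Leftrightarrow$ (ii) and (i) $\Leftrightarrow$ (iv), then deducing (i) $\Leftrightarrow$ (iii) from the manifest $X \leftrightarrow Y$ symmetry of (i) given by $s_{XY}=s_{YX}$ (Remark~\ref{Smatrem}).

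For (i) $\Leftrightarrow$ (ii), I would rewrite formula~\eqref{sij} as $s_{XY} = \sum_{Z\in\O(\C)} N_{XY}^Z\, \lambda_Z\, d_Z$, where $\lambda_Z := \theta_Z/(\theta_X\theta_Y)$. By Theorem~\ref{AMV}, each $\theta_Z$ is a root of unity, so $|\lambda_Z|=1$; the triangle inequality then yields $|s_{XY}| \leq \sum_Z N_{XY}^Z d_Z = d_X d_Y$, with equality iff all the unit-modulus numbers $\lambda_Z$ with $N_{XY}^Z\neq 0$ agree. By formula~\eqref{twist}, $\lambda_Z$ is precisely the scalar by which $c_{Y,X}\circ c_{X,Y}$ acts on the $Z$-isotypic component of $X\ot Y$, so this equality is equivalent to $c_{Y,X}\circ c_{X,Y}$ being a scalar multiple of $\id_{X\ot Y}$, i.e., (ii).

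For (i) $\Leftrightarrow$ (iv), I would apply the Verlinde formula~\eqref{ss= Nds} to the triple $(Y,X,X^*)$ and use $s_{YX^*}=s_{X^*Y}$ to obtain
\begin{equation*}
s_{XY}\, s_{X^*Y} \;=\; d_Y\, s_{Y,\,X\ot X^*}, \qquad s_{Y,\,X\ot X^*}:=\sum_{W\in\O(\C)} N_{XX^*}^W\, s_{YW}.
\end{equation*}
Together with the standard identity $\overline{s_{XY}} = s_{X^*Y}$ for modular categories (which follows from \eqref{sij}, from $\overline{\theta_Z}=\theta_Z^{-1}$, and from the symmetry relation $S^2 = \dim(\C)E$ of Proposition~\ref{S2=E}), this becomes $|s_{XY}|^2 = d_Y\, s_{Y,\,X\ot X^*}$. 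If (iv) holds, Proposition~\ref{recognizing centralizing} gives $s_{YW}=d_Yd_W$ for every simple $W\subset X\ot X^*$, so $s_{Y,X\ot X^*}=d_Y d_X^2$ and $|s_{XY}|^2 = d_X^2 d_Y^2$, which is (i). Conversely, assuming (i), the equality $\sum_W N_{XX^*}^W\, s_{YW} = d_Y d_X^2 = \sum_W N_{XX^*}^W d_Y d_W$ combined with the bounds $|s_{YW}|\leq d_Y d_W$ forces the triangle inequality to be an equality with every $s_{YW}$ equal to the positive real number $d_Y d_W$; Proposition~\ref{recognizing centralizing} then delivers (iv). The equivalence (i) $\Leftrightarrow$ (iii) is then immediate by interchanging the roles of $X$ and $Y$.

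The main obstacle is the rigidity step in the converse direction of (i) $\Leftrightarrow$ (iv): upgrading the single scalar identity $s_{Y,X\ot X^*} = d_Y d_{X\ot X^*}$ to the object-by-object equalities $s_{YW}=d_Y d_W$ for every simple $W\subset X\ot X^*$, as demanded by Proposition~\ref{recognizing centralizing}. This hinges crucially on the sharpness of the bound $|s_{YW}|\leq d_Y d_W$ together with the positivity of $d_Y d_W$, which together rule out any cancellation among the complex numbers $s_{YW}$ and force each of them to saturate its bound.
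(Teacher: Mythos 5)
Your proof is correct. Note that the paper itself gives no argument here --- it simply cites \cite[Lemma 6.5]{GN} and \cite[Proposition 3.32]{DGNO2} --- so the comparison is really with those references. Your treatment of (i) $\Leftrightarrow$ (ii) (the triangle inequality applied to \eqref{sij}, using Theorem~\ref{AMV} and the positivity of the $d_Z$, together with the observation that \eqref{twist} identifies $\theta_Z\theta_X^{-1}\theta_Y^{-1}$ with the scalar by which the monodromy acts on the $Z$-isotypic component) is the standard one. Where you genuinely diverge is in (i) $\Leftrightarrow$ (iv): the cited proofs pass from (ii) to (iii)/(iv) structurally, using that the monodromy is multiplicative in each variable by the hexagon axioms, so that a scalar monodromy $\lambda$ with $Y$ forces monodromy $\lambda^{-1}$ with $Y^*$ and hence trivial monodromy with $Y\ot Y^*$; this argument needs no $S$-matrix identities and works for arbitrary pre-modular categories. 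Your route instead goes through the Verlinde formula \eqref{ss= Nds}, the identity $\overline{s_{XY}}=s_{X^*Y}$, and a second equality-in-the-triangle-inequality argument to force $s_{YW}=d_Yd_W$ for each simple $W\subset X\ot X^*$, after which Proposition~\ref{recognizing centralizing} applies; this is valid but leans on the pseudo-unitary setting (positive $d_W$), which is indeed available in the integral context of this section. One small point: $\overline{s_{XY}}=s_{X^*Y}$ does not drop straight out of \eqref{sij} and $\overline{\theta_Z}=\theta_Z^{-1}$ (that computation produces the $S$-matrix of the reverse category); the clean derivation is that $X\mapsto\overline{h_Y(X^*)}$ is again a character of $K(\C)$, and the orthogonality relation \eqref{S-matr ort} together with $\sum_X |h_Y(X)|^2>0$ forces it to coincide with $h_Y$. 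With that justification supplied, the argument is complete.
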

\begin{proof}
See \cite[Lemma 6.5]{GN} and \cite[Proposition 3.32]{DGNO2}.
\end{proof}

\begin{definition}
\label{def: projectively centralize}
When equivalent conditions  of Lemma~\ref{proj centr} are satisfied, we say that $X$ and $Y$ 
{\em projectively centralize} each other. 
\end{definition}

\begin{lemma}
\label{key} 
Let $X$ and $Y$ be two simple objects of $\C$ such that $d_X$ and $d_Y$ are relatively prime.
Then one of two possibilities hold:
\begin{enumerate}
\item[(i)]  $X,\,Y$ projectively centralize each other, or
\item[(ii)] $s_{XY}=0$.
\end{enumerate}
\end{lemma}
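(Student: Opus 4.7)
The plan is to show that $\alpha := s_{XY}/(d_Xd_Y)$ is an algebraic integer, all of whose Galois conjugates lie in the closed unit disk, and then invoke Kronecker's theorem to force $\alpha$ either to be zero or to be a root of unity. The two conclusions will correspond directly to the two alternatives in the lemma, via the estimate \eqref{sXY estimate} and Lemma~\ref{proj centr}.

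First I would establish algebraic integrality of $\alpha$. By Remark~\ref{K0 characters} the assignments $h_X, h_Y : K(\C) \to \kk$ are ring homomorphisms, so $h_X(Y) = s_{XY}/d_X$ and $h_Y(X) = s_{XY}/d_Y$ are eigenvalues of integer matrices $N^Y$, $N^X$ and hence algebraic integers. Since $\gcd(d_X,d_Y)=1$, pick integers $a,b$ with $a d_X + b d_Y = 1$; then
\[
\frac{s_{XY}}{d_X d_Y} \;=\; a\cdot\frac{s_{XY}}{d_Y} \;+\; b\cdot\frac{s_{XY}}{d_X}
\]
is a $\BZ$-linear combination of algebraic integers, so $\alpha$ is an algebraic integer.

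Next I would bound the Galois conjugates of $\alpha$. Because $\C$ is integral, $d_X, d_Y \in \BZ_{>0}$, so $d_Xd_Y$ is fixed by every $\sigma \in \Gal(\overline{\BQ}/\BQ)$, giving $\sigma(\alpha) = \sigma(s_{XY})/(d_Xd_Y)$. By the Galois-theoretic property recalled in Section~\ref{sect mod group and Galois}, there is a permutation $\tilde\sigma$ of $\O(\C)$ with $\sigma(s_{XY}) = \pm\, s_{X,\tilde\sigma(Y)}$; applying the same identity to the row $s_{\be,-}$ gives $d_Y = \sigma(d_Y) = \pm\, d_{\tilde\sigma(Y)}$, and positivity of dimensions forces $d_{\tilde\sigma(Y)} = d_Y$. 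Combining this with the universal bound $|s_{X,Z}| \leq d_X d_Z$ from \eqref{sXY estimate} yields
\[
|\sigma(\alpha)| \;=\; \frac{|s_{X,\tilde\sigma(Y)}|}{d_X d_Y} \;\leq\; \frac{d_X d_{\tilde\sigma(Y)}}{d_X d_Y} \;=\; 1
\]
for every $\sigma$.

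Finally I would invoke Kronecker's theorem: an algebraic integer all of whose Galois conjugates have absolute value $\leq 1$ is either $0$ or a root of unity. If $\alpha = 0$ we land in case (ii). If $\alpha$ is a root of unity, then $|\alpha|=1$, i.e.\ $|s_{XY}| = d_Xd_Y$, and Lemma~\ref{proj centr} puts us in case (i). I do not expect a serious obstacle here; the one place requiring care is ensuring the Galois-action statement of Section~\ref{sect mod group and Galois} applies literally (it is stated for the modular representation of $\Z(\A)$-type categories, but $\C$ here is modular by Proposition~\ref{Muger1prime} and integrality of $\FPdim$ plus Proposition~\ref{integer --> ps un} gives the canonical spherical structure for which the cited Galois formalism is available).
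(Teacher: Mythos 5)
Your proof is correct, and its skeleton (Bezout to get that $\alpha=s_{XY}/(d_Xd_Y)$ is an algebraic integer, then a bound on all Galois conjugates, then the norm/Kronecker dichotomy $\alpha=0$ or $|\alpha|=1$, then Lemma~\ref{proj centr}) is the same as the paper's. The one place where you genuinely diverge is the justification that $|\sigma(\alpha)|\le 1$ for every $\sigma$. You invoke the Galois symmetry of the $S$-matrix from Section~\ref{sect mod group and Galois} (that $\sigma(S)S^{-1}$ is a signed permutation matrix), which is a deep theorem from the congruence-subgroup circle of ideas; the paper instead uses the elementary observation, recorded just before the lemma, that by \eqref{sij}, Theorem~\ref{AMV} and integrality each entry $s_{XY}$ is a sum of exactly $d_Xd_Y$ roots of unity. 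That description is manifestly stable under $\Gal(\overline{\mathbb{Q}}/\mathbb{Q})$, so $|\sigma(s_{XY})|\le d_Xd_Y$ follows from the triangle inequality with no further input, and the norm $\alpha_1\cdots\alpha_n\in\mathbb{Z}$ is then $0$ or $\pm1$. Your route buys nothing extra here and imports a much stronger result (whose proof is far harder than the lemma itself), whereas the paper's route is self-contained at this point of the development; also note that your appeal to Kronecker to conclude $\alpha$ is a root of unity is more than needed, since $|\alpha|=1$ already drops out of the norm being $\pm1$ together with $|\alpha_i|\le 1$ for all $i$. Everything you wrote is nonetheless valid: the permutation $\tilde\sigma$ is indeed uniform in the row index, so evaluating at the row of $\be$ does give $d_{\tilde\sigma(Y)}=d_Y$ by positivity of dimensions in the canonical spherical structure.
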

\begin{proof} 
By \eqref{homK0}, $\frac{s_{XY}}{d_X}$ and $\frac{s_{XY}}{d_Y}$ are algebraic
integers. Since $d_X$ and $d_Y$ are relatively prime, $\alpha:=\frac{s_{XY}}{d_Xd_Y}$ is also an
algebraic integer.  Indeed, if $a,\,b\in \mathbb{Z}$ are such that $ad_X+bd_Y=1$
then
\[
\frac{s_{XY}}{d_Xd_Y} =  a \frac{s_{XY}}{d_Y}  + b  \frac{s_{XY}}{d_X}. 
\]
Let $\alpha_1 = \alpha,\,\alpha_2,\, \dots, \alpha_n$ be algebraic conjugates of $\alpha$. 
Then the norm $ \alpha_1 \alpha_2 \cdots \alpha_n$ is an integer.  Since it is  $\leq 1$
in absolute value it must be either $\pm 1$ or $0$.  In the former case   $|\alpha_i|=1$
for every $i$ and so  $|\alpha|= 1$, i.e., $X,\,Y$ projectively centralize each other. 
In the latter case $\alpha =0$, i.e., $s_{XY}=0$.
\end{proof}

\begin{corollary}
\label{primpow} 
Suppose $\C$  contains a  simple object $X$ with
dimension $d_X=p^r$, where $p$ is a prime and $r>0$. Then $\C$
contains a nontrivial symmetric subcategory.
\end{corollary}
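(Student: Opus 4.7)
My plan is to argue by contradiction: assume that $\C$ contains no nontrivial symmetric subcategory, and set $\D := \langle X \otimes X^*\rangle$, the fusion subcategory generated by $X\otimes X^*$. Since $\D\cap \D'$ is the M\"uger center of $\D$ and is therefore symmetric, the hypothesis forces $\D\cap \D' = \Vec$. Proposition~\ref{Muger1prime} then makes $\D$ modular, and \eqref{Mueger's factorization} yields $\C\cong \D\boxtimes \D'$.

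Next I would decompose $X = X_1 \boxtimes X_2$ along this factorization, with $X_1\in \D$ and $X_2\in \D'$. Then
\[
X\otimes X^* \;\cong\; (X_1\otimes X_1^*)\boxtimes (X_2\otimes X_2^*)
\]
must lie in the subcategory $\D \cong \D\boxtimes\Vec$, forcing $X_2\otimes X_2^*\cong \be$; so $X_2$ is invertible, $d_{X_1}=p^r$, and $\langle X_1\otimes X_1^*\rangle = \D$ inside $\D$. Arguing by induction on $\dim(\C)$: if $\D\subsetneq \C$, the induction hypothesis applied to the modular category $\D$ (which contains the simple $X_1$ of dimension $p^r>1$) produces a nontrivial symmetric subcategory of $\D\subset \C$, contradicting our assumption. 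So we may reduce to the base case $\D=\C$, equivalently $\D' = \Vec$.

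In the base case, Lemma~\ref{key} applied to $X$ and any simple $Y\ne \be$ with $p\nmid d_Y$ forces $s_{XY}=0$, because no such $Y$ lies in $\D' = \Vec$. For an invertible $g\ne \be$ we have $d_g=1$ coprime to $p$, yet $X\otimes g$ is simple, so $c_{g,X}\circ c_{X,g}$ is a nonzero scalar automorphism of it and $|s_{Xg}|=d_X \ne 0$; combined with $s_{Xg}=0$ this is impossible. Hence $\C_{pt}=\Vec$, equivalently $\C_{ad}=\C$ by \eqref{Cpt and Cad}.

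To finish, I would show that in this base case $\dim(\C)$ must be a power of $p$: writing the integer multiplicities $m_\be(n) := [\be\,:\,(X\otimes X^*)^{\otimes n}]$ through characters as $m_\be(n) = \dim(\C)^{-1}\sum_Y d_Y^2 |h_Y(X)|^{2n}$, the strict inequality $|h_Y(X)| < d_X$ for all $Y\ne \be$ (no nontrivial $Y$ projectively centralizes $X$) makes the $Y=\be$ term dominant, and integrality of $m_\be(n)$ for every $n$ then pins $\dim(\C)$ to a prime power. With $\dim(\C)$ a power of $p$, Corollary~\ref{pn --> nilpotent} gives that $\C$ is nilpotent, Proposition~\ref{basicp2-bis}(iii) upgrades this to solvable, and Proposition~\ref{basicp2-bis}(iv) produces a nontrivial invertible in the nontrivial category $\C$---contradicting $\C_{pt} = \Vec$. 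The main obstacle is making this final asymptotic-integrality step rigorous enough to conclude that $\dim(\C)$ is a prime power; this arithmetic step is the genuine heart of the proof, while the reduction to it is a rather mechanical use of M\"uger's factorization together with Lemma~\ref{key}.
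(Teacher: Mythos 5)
Your reduction is sound and runs parallel to the paper's: the paper likewise inducts on $\dim(\C)$ via M\"uger's factorization $\C\cong\D\boxtimes\D'$ (it factors along an arbitrary nontrivial proper non-degenerate subcategory, you along $\langle X\otimes X^*\rangle$; both are fine), and your observations that $s_{XY}=0$ for every simple $Y\neq\be$ with $p\nmid d_Y$ in the terminal case, and that consequently $\C_{pt}=\Vec$, are correct. The genuine gap is the step you yourself flag: the claim that integrality of $m_{\be}(n)=\dim(\C)^{-1}\sum_Y d_Y^2|h_Y(X)|^{2n}$ together with dominance of the $Y=\be$ term pins $\dim(\C)$ to a power of $p$. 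This does not follow. Writing $\dim(\C)\,m_{\be}(n)=p^{2rn}+\sum_{Y\neq\be}d_Y^2\lambda_Y^{\,n}$ with $\lambda_Y=|h_Y(X)|^2<p^{2r}$, the error term is only $o(p^{2rn})$: it still grows without bound whenever some $\lambda_Y\neq 0$, so the integer $m_{\be}(n)$ is never forced to equal (or even approximate within a bounded error) $p^{2rn}/\dim(\C)$, and no divisibility constraint on $\dim(\C)$ emerges. Even if one knew $m_{\be}(n)$ were the nearest integer to $p^{2rn}/\dim(\C)$, that would not make $\dim(\C)$ a prime power. So the "heart" of the argument is missing, and the subsequent (non-circular but heavy) detour through nilpotence and solvability never gets off the ground.

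The fix is much cheaper, and you already have every ingredient in hand. Instead of estimating multiplicities, use the orthogonality of the $X$-column of the $S$-matrix with the dimension column (a consequence of $S^2=\dim(\C)E$, or of \eqref{S-matr ort} with $Z=\be$): since $X\neq\be$, one has $\sum_{Y\in\O(\C)}\frac{s_{XY}}{d_X}\,d_Y=0$. The summand for $Y=\be$ equals $1$; the summands with $Y\neq\be$ and $p\nmid d_Y$ vanish by your claim $s_{XY}=0$; and each remaining summand is the algebraic integer $h_X(Y)=s_{XY}/d_X$ times an integer $d_Y$ divisible by $p$. Hence $0=1+p\alpha$ with $\alpha$ an algebraic integer, so $-1/p$ would be an algebraic integer --- a contradiction. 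This is exactly how the paper concludes, and it replaces your entire final paragraph.
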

\begin{proof} 
We first show that $\E$ contains a nontrivial
proper subcategory. Let us assume that it does not. 
Take any simple $Y\ne \be$ with $d_Y$ coprime to $d_X$ (such a $Y$ must exist
since $p$ divides $\dim(\C)$ by Proposition~\ref{EGdiv}). 
We claim that $s_{XY}=0$. Indeed,
otherwise $X$ and $Y$ projectively centralize each other by Lemma~\ref{key},
so the centralizer of the category generated by $Y\ot Y^*$ is
nontrivial, and we get a nontrivial proper subcategory, a  contradiction. 

Now let us use the orthogonality of columns $(s_{XY})$ and $(d_Y)$
of the $S$-matrix: 
\[
\sum_{Y\in \O(\C)}\, \frac{s_{XY}}{d_X}\,d_Y=0.
\]
It follows that all the nonzero summands in this sum, except
the one for $Y=\be$, come from objects $Y$ of dimension divisible
by $p$. Therefore, all the 
summands in this sum except for the one for $Y=\be$ (which equals
$1$) are divisible by $p$. This is a contradiction.

Now we prove the corollary by induction in $\dim(\C)$. 
Let $\D$ be a nontrivial proper subcategory of $\C$. 
If $\D$ is degenerate, then  $\D\cap \D'$ is a nontrivial
proper symmetric subcategory of $\C$, so we are done. 
Otherwise, $\D$ is non-degenerate, and by \eqref{Mueger's factorization}
we have
$\C=\D\boxtimes \D'$. Thus $X=X_1\otimes X_2$, where $X_1\in \D$,
$X_2\in \D'$ are simple. Since the dimension of $X_1$ or $X_2$ is
a positive power of $p$, we get the desired statement from the
induction assumption applied to $\D$ or $\D'$ (which are non-degenerate braided
fusion categories of smaller dimension). 
\end{proof}

\begin{remark}
Corollary~\ref{primpow}  generalizes Burnside's theorem that a finite group $G$ 
with a conjugacy class of prime power size  can not be simple.
\end{remark}

%%%%%%%%%%%%%%%%%%%%%%%%%%%%%%%%%%%%%%%%%%%%%%%
\subsection{Solvability of fusion categories of  Frobenius-Perron dimension $p^aq^b$} 

Recall from Definition~\ref{def:weakly group-theoretical} 
that a fusion category $\A$ is solvable if it is categorically Morita equivalent to
a cyclically nilpotent fusion category. 

\begin{proposition}
A fusion category $\A$ is solvable if and only if there is a sequence of  fusion categories
\[
\A_0 =\Vec,\, \A_1,\dots, \A_n = \A
\] 
and a sequence of cyclic groups of prime order such that $\A_i$ is obtained from $\A_{i-1}$
either by a $G_i$-equivariantization or as a $G_i$-extension.
\end{proposition}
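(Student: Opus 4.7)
The plan is to prove the two implications separately.

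\emph{Forward implication ($\Leftarrow$).} I induct on $n$. The base $\A_0 = \Vec$ is solvable (vacuously cyclically nilpotent). For the inductive step, each $G_i$ is cyclic of prime order and hence solvable, so Proposition~\ref{basicp2-bis}(i) --- closure of the class of solvable fusion categories under extensions and equivariantizations by solvable groups --- gives that $\A_i$ is solvable whenever $\A_{i-1}$ is. In particular $\A = \A_n$ is solvable.

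\emph{Reverse implication ($\Rightarrow$).} Assume $\A$ is solvable. By definition $\A$ is Morita equivalent to a cyclically nilpotent $\B$, so $\Z(\A)\cong\Z(\B)$ contains a nontrivial Tannakian subcategory arising from the (cyclic) universal grading of $\B$. I induct on $\FPdim(\A)$. The base case $\FPdim(\A)=1$ forces $\A = \Vec$. For $\FPdim(\A)>1$, refining the above Tannakian subcategory via a composition series of its solvable (cyclic) grading group yields a Tannakian $\Rep(\BZ/p\BZ)\subset\Z(\A)$ for some prime $p$. It then suffices to produce a solvable fusion category $\A'$ with $\FPdim(\A')=\FPdim(\A)/p$ such that $\A$ is either a $\BZ/p\BZ$-extension or a $\BZ/p\BZ$-equivariantization of $\A'$; the inductive hypothesis applied to $\A'$ supplies the remainder of the tower.

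The construction of $\A'$ hinges on a dichotomy relative to the \emph{universal Tannakian subcategory} $\Rep(U(\A))\subset\Z(\A)$, consisting of objects whose underlying object in $\A$ is a multiple of $\be$. Since $\Rep(\BZ/p\BZ)$ is simple as a Tannakian, either it lies inside $\Rep(U(\A))$ or their intersection is trivial. In the former case, the inclusion corresponds to a surjection $U(\A)\twoheadrightarrow\BZ/p\BZ$, which yields a $\BZ/p\BZ$-grading of $\A$; letting $\A'$ be the trivial component of this grading, $\A$ is a $\BZ/p\BZ$-extension of the solvable fusion subcategory $\A'$ of dimension $\FPdim(\A)/p$ (Theorem~\ref{thm Lagrange}). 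In the latter case, the forgetful functor $\Z(\A)\to\A$ is faithful on $\Rep(\BZ/p\BZ)$, producing a central Tannakian subcategory $\Rep(\BZ/p\BZ)\subset\A$ with respect to which the de-equivariantization (Section~\ref{Sect de-eq}) gives $\A=(\A')^{\BZ/p\BZ}$ with $\A'\sim \A\rtimes\BZ/p\BZ$ (Example~\ref{equiv dual grad}); hence $\A'$ is solvable, being Morita equivalent to a $\BZ/p\BZ$-extension of the solvable category $\A$, and $\FPdim(\A')=\FPdim(\A)/p$.

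The main obstacle is the case $p=2$ in the equivariantization branch: the image of $\Rep(\BZ/2\BZ)$ in $\A$ could a priori be super-Tannakian $\sVec$ rather than Tannakian, obstructing a genuine de-equivariantization within fusion categories. This is handled by using the abundance of invertible objects guaranteed by Proposition~\ref{basicp2-bis}(iv) in a solvable category --- which allows one to choose a different Tannakian subcategory of $\Z(\A)$ avoiding the $\sVec$ obstruction, or failing that to first perform a $\BZ/2\BZ$-extension step that removes it --- together with the Deligne description of symmetric fusion categories. This use of Proposition~\ref{basicp2-bis}(iv) is where solvability (rather than merely weak group-theoreticity) is essential to the argument.
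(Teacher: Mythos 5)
Your overall strategy is the right one, and it is essentially the argument of \cite{ENO2} that the paper cites in place of a proof: the ``if'' direction follows from the closure properties in Proposition~\ref{basicp2-bis}(i), and the ``only if'' direction proceeds by extracting a Tannakian $\Rep(\BZ/p\BZ)\subset\Z(\A)$ from the top cyclic grading of a Morita-equivalent cyclically nilpotent category, and then splitting according to whether this subcategory lies over multiples of $\be$ (giving a $\BZ/p\BZ$-extension) or embeds into $\A$ under the forgetful functor (giving a $\BZ/p\BZ$-equivariantization). However, in the equivariantization branch you have the Morita equivalence backwards, and this is where your justification of the key inductive claim breaks down: the de-equivariantization $\A'=\A_{\BZ/p\BZ}$ has Frobenius--Perron dimension $\FPdim(\A)/p$, so it cannot be Morita equivalent to $\A\rtimes\BZ/p\BZ$, whose dimension is $p\,\FPdim(\A)$. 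The correct statement, obtained by applying Example~\ref{equiv dual grad} to $\A=(\A')^{\BZ/p\BZ}$, is that $\A$ is Morita equivalent to $\A'\rtimes\BZ/p\BZ$, a $\BZ/p\BZ$-extension \emph{of $\A'$} containing $\A'$ as a fusion subcategory; solvability of $\A'$ then follows from closure of solvability under Morita equivalence and under passage to subcategories (Proposition~\ref{basicp2-bis}(i)). Your sentence ``$\A'$ is solvable, being Morita equivalent to a $\BZ/p\BZ$-extension of the solvable category $\A$'' does not parse dimensionally and should be replaced by this.

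Your final paragraph manufactures an obstruction that is not present. The subcategory $\Rep(\BZ/p\BZ)\subset\Z(\A)$ is Tannakian from the outset: it is a braided subcategory of the Tannakian subcategory $\Rep(G_n)\subset\Z(\B)$ coming from the top grading of the cyclically nilpotent $\B$, and such a subcategory is of the form $\Rep(G_n/H)$ with the standard symmetric braiding. In the equivariantization case the forgetful functor is automatically \emph{fully} faithful on it (its nontrivial invertible objects do not map to $\be$, hence map to pairwise non-isomorphic invertible objects of $\A$), which is all the de-equivariantization machinery of Section~\ref{Sect de-eq} requires --- for $p=2$ exactly as for odd $p$. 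The $\sVec$ difficulty is genuine elsewhere (e.g.\ in Theorem~\ref{burnside's paqb theorem}, where one must locate a Tannakian subcategory inside a symmetric one that is only known to be symmetric), but it does not arise here, and Proposition~\ref{basicp2-bis}(iv) plays no role. The proposed remedy (``choose a different Tannakian subcategory \dots or first perform a $\BZ/2\BZ$-extension step'') is not an argument; the paragraph should simply be deleted.
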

\begin{proof}
See \cite[Proposition 4.4]{ENO2}. 
\end{proof}

Let $p$ and $q$ be prime numbers.

\begin{theorem}
\label{burnside's paqb theorem}
Let $\C$ be an integral non-degenerate braided fusion category of 
Frobenius-Perron dimension $p^aq^b$.  If $\C$ is not pointed, it contains
a Tannakian subcategory $\Rep(G)$, where $G$ is a cyclic group of prime order. 
\end{theorem}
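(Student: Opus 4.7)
Since $\FPdim(\C)=p^aq^b$ is an integer, Corollary~\ref{canonical spherical structure in integer case} equips $\C$ with a canonical spherical structure in which $d_X=\FPdim(X)$ for every simple object. Non-degeneracy together with Proposition~\ref{Muger1prime} then makes $\C$ a modular category, and Proposition~\ref{EGdiv} forces $d_X^2\mid p^aq^b$, so every simple object has dimension of the form $p^iq^j$. I plan to argue by induction on $a+b$, the base case $a+b\leq 1$ being vacuous by Corollary~\ref{Kac-Zhu} (which makes $\C$ pointed in that range, so the hypothesis ``not pointed'' is empty).

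The first step is to locate a simple object $X\in\C$ whose dimension is a nontrivial prime power. When $a\leq 1$ or $b\leq 1$ the divisibility $d_X^2\mid p^aq^b$ already forces every simple object to be of pure $p$-power or pure $q$-power dimension, so such $X$ exists as soon as $\C$ is not pointed. When $a,b\geq 2$, if no such $X$ existed, every non-invertible simple would have dimension divisible by $pq$; reducing $p^aq^b=\sum_{Y\in\O(\C)}d_Y^2$ modulo $p^2q^2$ would then force the group $G$ of invertibles to have order divisible by $p^2q^2$, and combined with $|G|\mid p^aq^b$ from Theorem~\ref{thm Lagrange}, $\C_{pt}$ would be a nontrivial proper subcategory of $\C$. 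If $\C_{pt}$ fails non-degeneracy, $\C_{pt}\cap \C_{pt}'$ is a nontrivial symmetric subcategory of $\C$ and we proceed to the next step; otherwise Mueger's factorization \eqref{Mueger's factorization} gives $\C\cong\C_{pt}\bt\C_{ad}$, and since $\C$ itself is not pointed, the factor $\C_{ad}$ is a non-pointed integral modular category of strictly smaller dimension, to which the inductive hypothesis applies.

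Given a simple $X$ with $d_X$ a nontrivial prime power, Corollary~\ref{primpow} supplies a nontrivial symmetric fusion subcategory $\K\subseteq\C$, which must take the form $\K\cong\Rep(H,z)$ for some nontrivial $\{p,q\}$-group $H$ and central involution $z$. Provided $\K\not\cong\sVec$, Remark~\ref{non-triv tannakian} produces a nontrivial Tannakian subcategory $\Rep(H/\la z\ra)\subseteq\K\subseteq\C$. By classical Burnside's theorem for finite groups (which we may invoke freely, as it is a theorem about abstract groups) the $\{p,q\}$-group $H/\la z\ra$ is solvable, hence has a nontrivial abelianization and admits a quotient of prime order $r\in\{p,q\}$. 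The corresponding $\Rep(\BZ/r)$ embeds into $\Rep(H/\la z\ra)\subseteq\C$, giving the desired cyclic Tannakian subcategory.

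The anticipated main obstacle is the exceptional possibility $\K\cong\sVec$, which can arise only when $2\in\{p,q\}$ and which by itself supplies no Tannakian subcategory. I would handle it by refining the previous step to choose an alternative simple object whose dimension is a power of the odd prime in $\{p,q\}$, so that the resulting symmetric subcategory via Corollary~\ref{primpow} is forced to strictly contain $\sVec$; alternatively one may appeal to the structure of modular extensions of $\sVec$ inside $\C$ to exhibit a Tannakian subcategory directly. Once this exceptional case is dispatched, the induction on $a+b$ closes and the proof is complete.
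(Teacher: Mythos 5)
Your overall strategy is the same as the paper's: locate a simple object of nontrivial prime power dimension, feed it to Corollary~\ref{primpow} to obtain a nontrivial symmetric subcategory $\K$, identify $\K$ with some $\Rep(H,z)$ via Deligne, and extract a cyclic Tannakian subcategory using Remark~\ref{non-triv tannakian} together with the classical Burnside theorem; the treatment of $\C_{pt}$ via M\"uger's factorization \eqref{Mueger's factorization} and an induction also mirrors the paper (which runs the first step as a contradiction argument under the assumption $\C_{pt}=\Vec$, so that $\E\cong\sVec$ is excluded automatically there). Up to that point your argument is sound.

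The genuine gap is exactly the case you flag as the ``anticipated main obstacle,'' namely $\K\cong\sVec$, and neither of your proposed fixes works. First, there need not exist a simple object whose dimension is a power of the odd prime: nothing prevents every non-invertible simple of a category of dimension $2^aq^b$ from having $2$-power dimension. Second, even when such an object exists, Corollary~\ref{primpow} gives no control over \emph{which} nontrivial symmetric subcategory it produces --- its proof is an induction that only asserts existence --- so you cannot force the output to ``strictly contain $\sVec$.'' The same difficulty recurs in your branch where $\K=\C_{pt}\cap\C_{pt}'$, which is precisely the symmetric category the paper analyzes: when $\FPdim(\C_{pt}\cap\C_{pt}')=2$ the paper does not argue further but defers to \cite[Propositions 7.4 and 8.3]{ENO2}, where a substantial separate analysis (using finer arithmetic of the $S$-matrix and the structure of slightly degenerate categories) is needed. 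Your closing phrase about ``the structure of modular extensions of $\sVec$ inside $\C$'' is a gesture at that missing content, not a proof of it; as written, the case $2\in\{p,q\}$ with $\C_{pt}\cap\C_{pt}'\cong\sVec$ remains open in your argument.
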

\begin{proof}
First let us show that $\C$ contains an invertible object.  Assume the contrary. 
By Proposition~\ref{EGdiv}  the dimension of  every $X\in \O(\C)$ divides $p^aq^b$.
There must be a simple object in $\C$ whose dimension is a prime power, since
otherwise the dimension of every non-identity simple object is divisible by $pq$
and  
\[
\FPdim(\C) = 1 (\mod pq), 
\]
a contradiction.  By Proposition~\ref{primpow} 
$\C$ contains a non-trivial symmetric subcategory $\E$.  This category $\E$ is not equivalent
to $\sVec$ since $\C_{pt} =\Vec$ by assumption. By Remark~\ref{non-triv tannakian} $\E$ contains
a non-trivial Tannakian subcategory $\Rep(G)$. The group $G$ is solvable
by the classical Burnside's theorem in group theory, hence, $\Rep(G)$ must contain
invertible objects.

Let $\C_{pt}$ be the maximal pointed subcategory of  $\C$.  We claim  that $\C_{pt}$ cannot
be non-degenerate. Indeed, otherwise $\C \cong \C_{pt}\bt \C_1$, where $\C_1 =\C_{pt}'$
by \eqref{Mueger's factorization}. But then the above argument $\C_1$ contains
non-trivial invertible objects which is absurd. 

Consider the symmetric fusion category $\E = \C_{pt}\cap \C_{pt}'$.
Let $n =\FPdim(\E)$. If $n>2$ then $\E$ contains a non-trivial Tannakian subcategory.
It remains to consider the case when $n =2$, i.e., when $\E =\sVec$ (this can only
happen if one of the primes $p,\,q$ is equal to $2$).  This
situation is treated in  \cite[Propositions 7.4 and 8.3]{ENO2}.
\end{proof}

\begin{corollary}
A fusion category of Frobenius-Perron dimension $p^aq^b$ is solvable.
\end{corollary}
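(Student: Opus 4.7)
The plan is to prove the corollary by strong induction on $N:=\FPdim(\A)=p^aq^b$. The base case $N=1$ is trivial since $\A=\Vec$. For the induction step, I would first reduce to the case when $\A$ is integral. If $\A$ is not integral, Proposition~\ref{graded dims} produces a faithful grading of $\A$ by a non-trivial subgroup $H$ of $(\mathbb{Z}/2\mathbb{Z})^2$ (the square classes of positive divisors of $p^{2a}q^{2b}$ lie in $\{1,p,q,pq\}$), with trivial component $\A_e$ integral and of strictly smaller Frobenius--Perron dimension of the form $p^{a'}q^{b'}$. Since $H$ is a solvable $2$-group and $\A_e$ is solvable by the inductive hypothesis, Proposition~\ref{basicp2-bis}(i) gives that $\A$ is solvable.

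It remains to settle the integral case. If $\A$ is integral and pointed, then $\A\cong\Vec_G^\omega$ with $|G|=p^aq^b$; classical Burnside's theorem ensures $G$ is solvable, and then Proposition~\ref{basicp2-bis}(ii) finishes this subcase. If $\A$ is integral but not pointed, I would pass to the center $\Z(\A)$. By Corollary~\ref{centerfactorizable} and \eqref{dim center}, $\Z(\A)$ is non-degenerate braided of Frobenius--Perron dimension $p^{2a}q^{2b}$, and it is integral (writing $\A\cong\Rep(Q)$ for a semisimple quasi-Hopf algebra $Q$, one has $\Z(\A)\cong\Rep(D(Q))$). Moreover, the forgetful functor $F:\Z(\A)\to\A$ is surjective and preserves Frobenius--Perron dimensions, so if $\Z(\A)$ were pointed then $\A$ would be pointed too; hence $\Z(\A)$ is not pointed.

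Applying Theorem~\ref{burnside's paqb theorem} to $\Z(\A)$ now produces a Tannakian subcategory $\Rep(G)\subset\Z(\A)$ with $G$ cyclic of prime order ($p$ or $q$). Theorem~\ref{ZC --> ext} combined with \eqref{FPdim Ae} yields that $\A$ is categorically Morita equivalent to a $G$-extension $\B=\bigoplus_{g\in G}\B_g$ of $\B_e$, with $\FPdim(\B_e)=\FPdim(\A)/|G|$ still of the form $p^{a'}q^{b'}$ but strictly smaller than $N$. The inductive hypothesis makes $\B_e$ solvable; since $G$ is cyclic of prime order (hence solvable), $\B$ is solvable as a $G$-extension of $\B_e$ by Proposition~\ref{basicp2-bis}(i), and solvability of $\A$ follows from the closure of the solvable class under Morita equivalence. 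The main technical obstacle is to ensure that Theorem~\ref{burnside's paqb theorem} is available at every step of the recursion: this is precisely the integrality hypothesis, which is what forces the separate preliminary reduction from arbitrary $\A$ to the integral case via the square-class grading.
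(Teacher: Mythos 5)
Your proof is correct and follows essentially the same route as the paper's: reduce to the integral case via the dimension grading, apply Theorem~\ref{burnside's paqb theorem} to $\Z(\A)$ to extract a Tannakian $\Rep(G)$ with $G$ cyclic of prime order, pass to a Morita-equivalent $G$-extension via Theorem~\ref{ZC --> ext}, and induct on $\FPdim$. You additionally spell out two details the paper elides — that a pointed $\Z(\A)$ forces $\A$ pointed (handled by Burnside's theorem and Proposition~\ref{basicp2-bis}(ii)) and that the non-integral reduction uses an extension by a solvable $2$-group — both of which are correct and welcome.
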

\begin{proof}
Let $\A$ be a fusion category such that $\FPdim(\A)= p^aq^b$.  We may assume that
$\A$ is integral. Indeed,  otherwise  $\A$ is an extension of an integral fusion category 
by Remark~\ref{Aad dims}
and the result follows by induction on $\FPdim(\A)$ using Lemma~\ref{extension Morita}.

The category $\Z(\A)$ satisfies the hypothesis of Theorem~\ref{burnside's paqb theorem}  
and, hence, contains a Tannakian subcategory $\Rep(G)$, where $G$ is a cyclic group of prime order.
By Theorem~\ref{ZC --> ext}  $\A$ is categorically Morita equivalent to a $G$-extension
of some fusion category $\B$. Since $\FPdim(\B)\leq \FPdim(\A)$ the result follows by induction.
\end{proof}

%%%%%%%%%%%%%%%%%%%%%%%%%%%%%%%%%%%%%%%%%%%%%%%
\subsection{Other results and open problem} 

Using orthogonality of columns of the $S$-matrix of a modular category one can prove several other classification results.
In particular, it was shown in \cite{ENO2} that integral fusion categories of dimension $pqr$, where $p,\,q,\, r$
are primes are group-theoretical.  It was also shown there that fusion categories of dimension $60$
are weakly group-theoretical.  

In a different direction, it was shown in \cite{NatP} that integral braided fusion category $\A$ 
such that every simple object of $\A$ has Frobenius-Perron dimension at most 2 is solvable.

The following natural question was asked in \cite{ENO2}. The answer is unknown to the author.

\begin{question}
Is every integral fusion category weakly group-theoretical?
\end{question}

Perhaps the arithmetic properties of $S$-matrices discussed in Section~\ref{sect mod group and Galois}
can be used in order to answer this question.

%%%%%%%%%%%%%%%%%%%%%%%%%%%%%%%%%%%%%%%%%%%%%%%%%%%%%%%%%%%%%
%%%%%%%%%%%%%%%%%%%%%%%%%%%%%%%%%%%%%%%%%%%%%%%%%%%%%%%%%%%%%
%%%%%%%%%%%%%%%%%%%%%%%%%%%%%%%%%%%%%%%%%%%%%%%%%%%%%%%%%%%%%
\bibliographystyle{plain}

\end{document}